\documentclass[11pt]{article}
\usepackage{amsmath, amssymb, amsthm}
\usepackage{bbm}
\usepackage{enumitem}
\usepackage{hyperref}
\usepackage{tikz}
\usetikzlibrary{calc}
\usepackage{xcolor}
\usepackage{multirow}
\usepackage{comment}

\newtheorem{theorem}{Theorem}[section]
\newtheorem{lemma}{Lemma}[section]

\newtheorem{proposition}{Proposition}[section]

\newtheorem{question}{Question}[section]

\newcommand{\N}{\mathbb{N}}

\newcommand{\Z}{\mathbb{Z}}

\newcommand{\Prob}{\mathbb{P}}

\usepackage[left=1in, right=1in, bottom = 1in, top = 1in]{geometry}
\title{Limit shape, avalanches, and stabilisation of abelian sandpiles on comb lattices}
\author{Robin Kaiser, Ecaterina Sava-Huss, Julia Überbacher}
\date{\today}

\begin{document}
\maketitle

\begin{abstract}
We study two aspects of the abelian sandpile model on comb lattices. First, we prove that the infinite-volume limit of the stationary measures is supported on the saturated configuration. We then investigate the shape of avalanches induced by adding a particle at the origin in finite boxes of size $(2n+1)\times(2n+1)$ around the origin. In the stationary distribution on these finite boxes, we show that avalanches reach the boundary along the vertical teeth with probability tending to $1$, while the horizontal spread is of order $\sqrt{n}$. Finally, we establish that the single-source limit shape for abelian sandpiles on the comb lattice agrees with the corresponding limit shapes for the divisible sandpile, internal diffusion-limited aggregation (IDLA), and rotor-router aggregation models. This establishes limit shape universality on the comb lattice.
\end{abstract}

\textbf{2020 Mathematics Subject Classification.} 31E05, 60J05, 60J45, 05C81.\\
\textbf{Keywords.} limit shape, abelian sandpile, infinite volume limit, stabilization, avalanches, comb lattice, odometer function, least action principle.

\section{Introduction}

In 1987, Bak, Tang, and Wiesenfeld \cite{BTW87,BTW88} introduced a lattice model for mass distribution that naturally evolves toward a critical state without the need to fine-tune parameters, thereby exhibiting self-organized criticality. Dhar \cite{D90} later extended the model from lattices to arbitrary graphs. Dhar later named it the abelian sandpile model, emphasizing the commutativity of the toppling dynamics. Since then, the abelian sandpile model has been extensively studied in both the mathematics and physics communities.

Consider a finite, undirected, connected graph $G=(V\cup\{s\},E)$ with a distinguished sink vertex $s$. A sandpile configuration consists of indistinguishable particles placed on the vertices $V$ of $G$. A vertex is called stable if it contains fewer particles than its degree; otherwise, it is unstable and legally topples by sending one particle along each incident edge. A sandpile configuration is stable if every vertex in $V$ is stable. The sink vertex $s$ serves to absorb excess mass. In this framework, one defines a Markov chain on the set of stable sandpile configurations as follows. At each time step, a vertex in $V$ is selected uniformly at random and a particle is added to it. If this addition makes the configuration unstable, unstable vertices are toppled until a stable configuration is obtained. The abelian sandpile Markov chain eventually reaches the set of recurrent configurations, and after entering this set it stays there forever. Furthermore, the stationary distribution of the chain is the uniform distribution on the recurrent configurations. 

The abelian sandpile model has been investigated extensively from many different perspectives. These include the study of height probabilities and average heights \cite{P94,JPR06,PPR11}, the dynamics of the abelian sandpile Markov chain and properties such as its mixing behavior \cite{JLP19,HJL19}, as well as critical exponents. Examples of the latter include exponents describing the probability that a vertex topples after a particle is added at the origin \cite{JRS15}, the total number of topplings, and the radius of an avalanche \cite{BHJ17,H20}.
In addition to the $d$-dimensional lattice $\mathbb{Z}^d$ \cite{LP09,FLP10}, the model has also been studied on many other classes of graphs. These include ladder graphs, formed as the Cartesian product of a finite connected graph with an interval \cite{JL07}, the Bethe lattice \cite{DM90,MRS02}, and fractal graphs such as the Sierpiński gasket graph \cite{CK20,KSW24,HKS25} and the Vicsek fractal graph \cite{HKS25_vic}.

In this paper, we study the abelian sandpile model on the two-dimensional comb lattice $\mathcal{C}_2$, the graph obtained from $\mathbb{Z}^2$ by deleting all horizontal edges except those lying on the $x$-axis. The comb lattice is of particular interest because it exhibits several unusual properties: for instance, two independent random walks on $\mathcal{C}_2$ intersect almost surely only finitely many times \cite{KP04}, and the Einstein relation between the fractal, spectral, and walk dimensions fails to hold \cite{B06}. Random walks on the comb lattice, as well as the corresponding local limit theorems, have been studied extensively \cite{G86,WH86,CD92,CCFR09}.

The first part of the paper concerns the infinite volume limit of the stationary measures and the size and shape of avalanches under these measures in finite volume. On transient graphs, the expected number of topplings at a site $y$ during the stabilization of $\eta+\delta_x$ is equal to the Green function $G(x,y)$, and is therefore finite. Here $\eta$ is a stable sandpile sampled from the uniform volume limit measure $\nu$ and $\eta +\delta_x$ denotes the addition of one particle at vertex $x$ to the configuration $\eta$. Moreover, it is known on $\Z^d$ for $d\geq 3$ that for $\nu$-almost every stable sandpile configuration $\eta$ and every $x\in V$, the configuration $\eta+\delta_x$ can be stabilized using only finitely many topplings \cite{JR08, J18}. 
For recurrent graphs, by contrast, the question of whether $\eta+\delta_x$ can be stabilized in finitely many topplings is still not fully understood. The infinite Vicsek fractal graph provides an example of a recurrent graph on which \(\eta+\delta_o\) stabilizes with probability \(3/4\) and explodes with probability \(1/4\) \cite{HKS25_vic}. On \(\mathbb{Z}^2\), adding a single particle causes every vertex to topple infinitely often in expectation \cite{J18}. This, however, does not rule out almost sure stabilization, but it rather shows that the Green function is not the appropriate tool for addressing the question. In particular, it remains open whether on $\mathbb{Z}^2$ the configuration $\eta+\delta_o$ can be stabilized in finitely many topplings.

All objects and concepts appearing below will be defined precisely in Section~\ref{sec:preliminaries}. On the two dimensional comb lattice $\mathcal{C}_2$, let $\nu_n$ denote the stationary distribution of the sandpile Markov chain on the box of side length $2n+1$ centered at the origin, with the exterior boundary identified as a sink (that is, particles that leave the box during stabilization are lost forever). We refer to this choice as wired boundary conditions. In the infinite volume limit, we show that, since $\mathcal{C}_2$ is a recurrent tree, the measures $\nu_n$ converge to a limit measure supported on the saturated configuration. We then study the shape of avalanches induced by adding  to a stable sandpile $\eta$ sampled from $\nu_n$ a particle at the origin, and establish a discrepancy between the vertical and horizontal spread of avalanches in finite boxes  of side length $2n+1$. The set of vertices that topple during the stabilization of $\eta+\delta_{o}$ (the sandpile obtained from adding to $\eta$ a particle at the origin $o$) is called the avalanche induced by $\eta+\delta_o$ and is denoted by $\text{Av}(\eta,o)$. For a set $A\subseteq\mathcal{C}_2$ we define its vertical and horizontal spread as
$$\text{Hor}(A)=\sup\{|x|\;:\; (x,y)\in A\},\hspace{1cm}\text{Ver}(A)=\sup\{|y|\;:\;(x,y)\in A\}.$$

\begin{theorem}\label{thm:main}
    Consider the two dimensional comb lattice $\mathcal{C}_2$, and let $\nu_n$ be the stationary distribution of the abelian sandpile Markov chain on the finite box of side length $2n+1$ centered at the origin, equipped with wired boundary conditions.  Let $\eta$ be a stable sandpile on the finite box of side length $2n+1$ sampled from $\nu_n$. Then it holds:
    \begin{enumerate}[label=(\alph*)]
    \setlength\itemsep{0em}
        \item\label{thm:maina} (Infinite volume limit) The measures $(\nu_n)_{n\in\N}$ converge weakly to the Dirac measure $\delta_{\eta_\text{max}}$ supported on the saturated configuration on $\mathcal{C}_2$, defined as $\eta_{\text{max}}(v)=\deg(v)-1$ for all $v\in\mathcal{C}_2$.
        \item\label{thm:mainb} (Vertical spread) The avalanche induced by $\eta+\delta_o$ reaches the boundary along a
        vertical tooth with high probability, that is
        $$\lim_{n\rightarrow\infty}\nu_n(\text{Ver}(\text{Av}(\eta,o))\geq n)=1.$$
        \item\label{thm:mainc} (Horizontal spread) The avalanche induced by $\eta+\delta_o$ has a horizontal spread of order $\sqrt{n}$ up to sub-polynomial corrections, that is, for every $\delta>0$ we have
        $$\lim_{n\rightarrow\infty}\nu_n(\text{Hor}\big(\text{Av}(\eta,o))\in[n^{1/2-\delta},n^{1/2+\delta}]\big)=1.$$
    \end{enumerate}
\end{theorem}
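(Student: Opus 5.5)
The plan rests on the fact that the box $V_n=[-n,n]^2\cap\mathcal{C}_2$ with wired boundary is a tree once the sink is removed. Recurrent configurations on a finite graph with sink are counted by spanning trees of the graph with the sink glued in (Kirchhoff / burning bijection). Since $\mathcal{C}_2$ restricted to the box is a tree, the number of recurrent configurations grows only through edges to the sink, and we can describe them explicitly via Dhar's burning algorithm.

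\textbf{Structure of recurrent configurations.} The degrees are as follows. A vertex $(x,y)$ with $y\neq 0$ has degree $2$. An axis vertex $(x,0)$ has degree $4$. Boundary vertices are connected to the sink.

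On a vertical tooth $\{(x,y):1\le y\le n\}$, burning starts from the top vertex $(x,n)$, which is adjacent to the sink. Along a path, a recurrent configuration can contain at most one vertex of height $0$ (height $\deg-2=0$). Every other vertex has height $1=\deg-1$. Burning must also be able to enter from the axis, which gives a bit more freedom. Likewise, the axis is a path whose endpoints $(\pm n,0)$ are adjacent to the sink.

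Using the burning algorithm, I would parametrize recurrent configurations as follows. For each half-tooth we record the position of its unique ``hole'' (if any), and for the axis we record deficits at the axis vertices constrained by forbidden subconfigurations. Concretely, $\nu_n$ is uniform on this set. Its cardinality equals the number of spanning trees of the box with the boundary glued to the sink. On a tree with $k$ boundary edges to the sink, this count equals the number of ways to cut each tooth at one of its edges, so roughly $(n+1)$ choices per half-tooth, weighted by the axis structure.

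\textbf{Part (a).} Under uniform measure, each half-tooth has at most one hole, placed essentially uniformly among about $n$ positions (or has no hole). Hence the probability that a fixed vertex $v$ is not saturated is $O(1/n)$. For axis vertices, I would show that the deficit probability also tends to $0$: each axis vertex receives burning from both teeth and neighbours, so a deficit requires a constrained local event, again of probability $O(1/n)$ or smaller. Weak convergence to $\delta_{\eta_{\max}}$ then follows from convergence of all cylinder probabilities.

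\textbf{Parts (b) and (c).} Add a particle at $o$. Toppling spreads through saturated regions and stops at holes. On a tooth above a toppling axis vertex $(x,0)$, the avalanche is a chip travelling up a saturated path; it reaches the sink unless there is a hole. Since a hole in a given half-tooth occurs with probability bounded away from $1$, there is a tooth (already at $x=0$, or near it) with no hole, or whose hole is absorbed by repeated firings. This gives (b). Here I must check that repeated toppling of $(x,0)$ fills holes, so every vertical tooth above a vertex that topples at least twice reaches height $n$.

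For (c), each axis vertex that topples loses mass to its two teeth. Its effective horizontal propagation is that of a one-dimensional walk killed at rate $\sim 1/n$ per step (the escape probability up a tooth of length $n$). The killed walk has range $\sqrt{n}$. Equivalently, the avalanche is governed by the odometer $u$ solving $\Delta u=-\delta_o$ with Dirichlet data on the box. Restricted to the axis, this reduces to a 1D equation $u''\approx u/n$ up to the hole corrections, whose solution decays like $e^{-|x|/\sqrt{n}}$. This gives the upper bound $n^{1/2+\delta}$ via a union bound on the random hole configuration. For the lower bound $n^{1/2-\delta}$, I show that the odometer stays $\ge1$ up to distance $n^{1/2-\delta}$ with high probability.

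\textbf{Main obstacle.} The hardest step is controlling the random holes: holes on teeth reduce leakage, so the axis walk is killed at a random position-dependent rate. I would first show that, with high probability, the holes among the $\sqrt{n}\,n^{\delta}$ relevant teeth are all at heights $\ge n^{1-\delta}$. This pins the killing rate between $n^{-1}$ and $n^{-1+\delta}$ and yields the $\sqrt{n}$ window up to $n^{\pm\delta}$.
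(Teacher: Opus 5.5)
\textbf{Part (b) and the lower bound in (c) do not go through.} Your argument for (b) rests on two claims that are false.

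First, a given half-tooth is \emph{not} hole-free with probability bounded away from $0$. It is hole-free essentially only when it is entirely present in $T(\eta)$, i.e.\ when it is the tooth through which its own backbone vertex reaches the sink. By Wilson's algorithm this has probability of order $\Prob(S_E=0)=O(n^{-1/2})$, with $S_E$ as in the paper: there is one connecting tooth per backbone stretch of length $\asymp\sqrt n$.

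Second, repeated topplings fill a hole only very slowly. A particle entering a half-tooth with hole at height $j\ge 2$ topples $(x,1),\dots,(x,j-1)$, returns one particle to $(x,0)$, and moves the hole down to height $j-1$. So $(x,0)$ must topple $j+1$ times before $(x,n)$ topples. But $j$ is uniform on $\{1,\dots,n\}$ (as you note), while the expected number of topplings at $(x,0)$ is only $O(\sqrt n)$. Thus ``topples at least twice'' gives nothing.

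You also never show that the avalanche \emph{reaches} a hole-free tooth, and for the lower bound $n^{1/2-\delta}$ you give no mechanism. The missing idea is a \emph{full-height path} from $o$ to the sink:
\begin{itemize}
\item By Wilson's algorithm, the tree path from $o$ is the loop-erasure of a walk. On the comb it runs along the backbone to $(S_E,0)$ and then up an intact tooth, and w.h.p.\ $n^{1/2-\delta}\le|S_E|\le n^{1/2+\delta}$.
\item The burning bijection gives $\eta=\deg-1$ on this whole path, provided two things hold: the other teeth at these backbone vertices are not cut at their bottom edge, and the backbone neighbours of $o$ and of $(S_E,0)$ hang off the path. These are $TH(k)$, $PO(k)$, $RHO(k)$, each of conditional probability tending to $1$.
\item Adding a particle at $o$ then topples the entire path, which gives (b) and the lower bound in (c) at once.
\end{itemize}

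\textbf{The upper bound in (c) and part (a).} The equation you write for the odometer holds only in expectation. Since $\eta\mapsto(\eta+\delta_o)^\circ$ preserves $\nu_n$, Dhar's formula gives $\mathbb{E}_{\nu_n}[u]=\widetilde\Delta^{-1}\delta_o$; the odometer of a fixed $\eta$ satisfies no such equation. So there are no ``hole corrections'' to control. Your proposed control is false anyway: each hole lies below $n^{1-\delta}$ with probability about $n^{-\delta}$, so among $\asymp n^{1/2+\delta}$ teeth one expects $\asymp n^{1/2}$ such holes.

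Used correctly, though, your idea gives a valid and shorter alternative to the paper's route (a blocking non-full backbone vertex, found via a missing backbone edge of $T(\eta)$ and a comparison of $l_T$ across it). On a tree the toppled set is connected and contains $o$. So with $m=\lceil n^{1/2+\delta}\rceil$,
\[
\nu_n\bigl(\mathrm{Hor}(\mathrm{Av}(\eta,o))>n^{1/2+\delta}\bigr)\le 2\,\mathbb{E}_{\nu_n}[u((m,0))]\le C\sqrt n\,e^{-cm/\sqrt n}\to 0,
\]
by the decay of the Green function along the backbone at scale $\sqrt n$. Being a first-moment bound, this cannot give the lower bound.

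For (a), your tooth argument is fine: moving a hole within a half-tooth preserves recurrence, so given that a hole exists it is uniform. The backbone claim, however, is unproved and its rate is wrong. Missing backbone edges of $T(\eta)$ have density $\asymp n^{-1/2}$, and w.h.p.\ each produces a deficit at one of its endpoints. So backbone deficits have probability $\asymp n^{-1/2}$. An $O(1/n)$ bound would, by a union bound over $|x|\le n^{1/2+\delta}$, let the first wave run past distance $n^{1/2+\delta}$ and contradict (c). A global input is needed here. The paper uses that wired and free uniform spanning forests agree on recurrent graphs, so on a recurrent tree $T(\eta)$ converges locally to the tree itself, and w.h.p.\ every vertex of a fixed ball has $a_T=b_T=0$.
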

Regarding the infinite volume limit part of the above result, it is not surprising that it concentrates on the saturated (maximal) configuration. Indeed, in Section~\ref{sec:inf-vol} we prove the corresponding statement for all infinite recurrent trees. The reason for this phenomenon is that, on any recurrent tree, the infinite volume limit of the wired uniform spanning forests is supported on a single tree, namely the entire underlying graph. The analogous conclusion for the infinite volume limit of abelian sandpiles then follows from the burning bijection.
Theorem~\ref{thm:main}~\ref{thm:mainb} shows that, in finite boxes, avalanches already reach the boundary with high probability along the vertical teeth. To prove this, we analyze the wired uniform spanning forest on finite subgraphs of $\mathcal{C}_2 $ and show that, under the stationary distribution, there is with high probability a path of vertices with maximal height reaching the boundary along a vertical tooth. A refinement of the analysis of the path of vertices with maximal height then yields Theorem~\ref{thm:main}~\ref{thm:mainc}.

Interestingly, the horizontal spread of avalanches is of the order of the square root of the vertical spread. This same scaling relation also appears in the limit shapes of three aggregation models on the comb lattice \cite{IDLA-comb-2012}: the divisible sandpile, internal diffusion-limited aggregation, and rotor-router aggregation.
This observation naturally motivates our second main result: we prove that the limit shape obtained by placing $n$ particles at the origin $o=(0,0)$ of $\mathcal{C}_2$ and stabilizing according to the abelian sandpile toppling rule,
coincides with the limit shape of the three aforementioned aggregation models when $n\to\infty$.
The divisible sandpile \cite{LP09} is the continuous analogue of the abelian sandpile model, in which particles may be split and a continuous mass is redistributed over the state space. In internal diffusion-limited aggregation (IDLA) \cite{DF90}, an aggregate is formed by independent random walks, each of which moves until it first reaches a previously unoccupied vertex. Rotor-router aggregation is a deterministic, derandomized, counterpart of IDLA \cite{PDDK96}.
The appearance of the same limit shape across these different models is referred to as \emph{limit shape universality}. This phenomenon has been established only for a few state spaces, such as the Sierpiński gasket \cite{CK20,FHKS24,Div-sand-gasket-2019,IDLA-gasket-2020}. In fact, on $\mathbb{Z}^2$, it is believed that the abelian sandpile limit shape is not a Euclidean ball, in contrast to the limit shape of the other three models \cite{Rotor-Zd-Levine-Peres-2008,LP09,Levine-Peres-2017-survey}. 

We add the comb lattice $\mathcal{C}_2$ to the class of graphs exhibiting limit shape universality, by determining the limit shape of the single source abelian sandpile. In particular, we prove that, on the comb lattice $\mathcal{C}_2$, the abelian sandpile model has the same limit shape as the corresponding aggregation models \cite{IDLA-comb-2012}.
For $n>0$, define the set
\begin{align}\tag{LS}\label{eq:Bm}
\mathcal B_n= \big\{(x,y)\in\mathcal{C}_2\;:\;|x|\le R_n,
    \ |y|\le H_n(x)
  \big\},
\end{align}
where
$R_n=\left(\frac{9n}{4}\right)^{1/3}$ and $H_n(x)=\frac13\bigl(R_n-|x|\bigr)_+^2.$

\begin{theorem}\label{thm:main2}(Limit shape of the abelian sandpile on $\mathcal{C}_2$)
 For $n\in\mathbb{N}$,  let $\mathcal{S}_n $ denote the set of vertices in $\mathcal{C}_2$ that topple during the stabilization of the sandpile configuration $n\delta_o$  of $n$ particles placed at $o=(0,0)$. Then, for every \(0<\varepsilon<1\), there exists \(N(\varepsilon)\in\mathbb{N}\) such that, for all \(n\ge N(\varepsilon)\), it holds
\[
\mathcal{B}_{(1-\varepsilon)n}\subseteq \mathcal{S}_n\subseteq \mathcal{B}_{(1+\varepsilon)n}.
\]
\end{theorem}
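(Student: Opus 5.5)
The plan is to reduce the two-dimensional stabilisation to a one-dimensional nonlinear difference equation for the odometer on the $x$-axis, which we then compare with its continuum solution. The continuum solution reproduces exactly the shape $\mathcal B_n$ from \eqref{eq:Bm}.

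\textbf{Step 1: reduction to the axis.} Let $u(v)$ be the odometer, i.e.\ the number of topplings at $v$ during the stabilisation of $n\delta_o$, and put $a(x):=u(x,0)$. Every half-tooth $\{(x,y): y\ge1\}$ (and likewise $y\le -1$) is a half-line on which all vertices have degree $2$. The only mass it receives is the $a(x)$ particles sent from its base $(x,0)$, minus the $u(x,1)$ particles it returns. Stabilisation on a half-line with threshold $2$ is completely explicit: the toppled region is an interval, the final configuration there consists of $1$'s with at most one hole, and the odometer is a discrete quadratic. Concretely, if $h(x)$ denotes the number of particles that end up in the half-tooth, then
\[
h(x)=a(x)-u(x,1),\qquad u(x,y)=\tfrac12\bigl(h(x)-|y|\bigr)_+^2+O(h(x)),\qquad a(x)=\tfrac12 h(x)^2+O(h(x)).
\]
These identities come from the discrete Laplace equation $\Delta u(x,y)=\eta_{\rm fin}(x,y)\in\{0,1\}$ along the tooth. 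The tooth of $\mathcal S_n$ above $x$ therefore has height $h(x)+O(1)$. Consider next the axis vertex $(x,0)$, which has degree $4$ and final height in $\{0,\dots,3\}$. It satisfies
\[
a(x+1)+a(x-1)-2a(x)-2h(x)=\eta_{\rm fin}(x,0)-n\delta_0(x)=O(1)-n\delta_0(x).
\]
This is a closed equation for $a$ once we substitute $h=\sqrt{2a}+O(1)$.

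\textbf{Step 2: continuum profile.} The scaling limit of this equation is $a''=2\sqrt{2a}$ on $(-R,R)$, with $a(\pm R)=a'(\pm R)=0$ and total mass $2\sum_x h(x)=n$. Try the ansatz $a(x)=c(R-|x|)^4$. It gives $12c=2\sqrt{2c}$, so $h=\sqrt{2c}\,(R-|x|)^2=\tfrac13(R-|x|)^2=H_n(x)$. The mass constraint then reads $4\int_0^R\tfrac13(R-x)^2\,dx=\tfrac49R^3=n$, which is exactly $R=R_n=(9n/4)^{1/3}$. This is the same profile as the divisible sandpile odometer of \cite{IDLA-comb-2012}. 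We can therefore reuse that paper's comparison functions instead of re-solving the ODE.

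\textbf{Step 3: rigorous sandwich (main obstacle).} The errors are $O(1)$ per site: the holes on teeth, the axis heights in $\{0,\dots,3\}$, and the $O(h)$ corrections in the tooth odometer. Summed over the $O(n^{1/3})$ axis sites, they contribute only $O(n^{2/3})=o(n)$ to the total mass.

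To make the argument rigorous we use monotonicity, via the least action principle. Build explicit discrete supersolutions and subsolutions $\bar u,\underline u$ on $\mathcal C_2$. On the axis take $\bar a(x)=c\,(R_{(1+\varepsilon)n}-|x|)_+^4$, corrected by lower-order terms, and extend quadratically along the teeth. Check that $n\delta_o+\Delta\bar u\le \deg-1$ everywhere; the least action principle then gives $u\le\bar u$, so $\mathcal S_n\subseteq\{\bar u>0\}\subseteq\mathcal B_{(1+\varepsilon)n}$. For the inner bound, show that $\underline u$, built from $R_{(1-\varepsilon)n}$, is dominated by $u$. Here we use that the final configuration has density at most $1$ on teeth and bounded density on the axis. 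So the mass $n$ cannot be stored inside a region smaller than $\mathcal B_{(1-\varepsilon)n}$, and a maximum-principle argument for $u-\underline u$ on the tree finishes the comparison.

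The delicate point is near the tips, where $R-|x|=O(1)$ and the $O(1)$ errors are comparable to $h$. I would handle this by showing that $a$ is monotone in $|x|$, which follows from symmetry and the abelian property. With monotonicity, the errors only shift $R$ by $O(1)=o(n^{1/3})$, and this is absorbed into the factor $(1\pm\varepsilon)$ once $n\ge N(\varepsilon)$.
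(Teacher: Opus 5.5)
There is a genuine gap, and it sits in the outer bound. Your overall architecture matches the paper's: an explicit subsolution with the minimum principle for the inner bound, and an explicit supersolution with the least action principle for the outer bound.

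\textbf{Inner bound.} This half is fine in principle. Real-valued subsolutions are admissible in the minimum principle. Comparing $u$ directly with a $\underline u$ satisfying $\Delta\underline u\ge 3$ on the backbone and $\ge 1$ on the teeth is equivalent to the paper's detour through the divisible sandpile with capacities $3$ and $1$. Three caveats:
\begin{itemize}
\item the lower-order correction must be negative, as in $K_x=\frac13 z_x^2-2$;
\item your mass-counting sentence proves no containment; the minimum principle is what does;
\item the comparison functions of \cite{IDLA-comb-2012} were built for backbone capacity $1$ and cannot be reused verbatim.
\end{itemize}

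\textbf{Outer bound: integrality is missing.} This is where all of the paper's work lies. The abelian least action principle gives $u\le\bar u$ only for \emph{integer-valued} $\bar u\ge 0$ with $n\delta_o+\Delta\bar u\le\deg-1$. The real-valued divisible odometer satisfies the same inequality yet lies below $u$. So a real quartic ``corrected by lower-order terms and extended quadratically along the teeth'' is not admissible, and rounding it can break $\Delta\bar u\le 1$ on the teeth.

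The missing idea is the integer tooth profile. Suppose $\bar u\ge0$ is integer-valued, $\Delta\bar u\le 1$ on a tooth, and $\bar u(x,0)=A$. Then $d(y)=\bar u(x,y-1)-\bar u(x,y)$ satisfies $d(y+1)\ge d(y)-1$, hence $A\ge\frac12 d(1)(d(1)+1)$. So the flow into the tooth is at most $g(A)-1$, and this is attained by the one-hole profile of Lemma~\ref{lem:outer-bound-stable}. The backbone condition therefore becomes the integer inequality
\[
\bar a(x+1)+\bar a(x-1)-2\bar a(x)\le 2g\bigl(\bar a(x)\bigr)+1,\qquad x\neq 0.
\]
The paper solves this with equality via \eqref{eq:odometer-backbone} and then brackets the solution between $z^4/18+z^2/2$ and $z^4/18+z^3$.

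\textbf{Your error bookkeeping does not address this inequality.} The slack $\varepsilon n$ only helps at the origin. At every other backbone vertex the budget is exactly $3$. So the $O(1)$ losses must be beaten pointwise by the choice of lower-order terms, not summed to $o(n)$. These losses come from rounding $\bar a$ and from $2g(A)+1$ versus $\sqrt{8A}$.

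This is not cosmetic. Take $z=R'-|x|$ and $\bar a=\lceil z^4/18\rceil$. At $z=16$ the left side equals $4641+2813-2\cdot 3641=172$, while $2g(3641)+1=171$. So no integer extension along the teeth makes $\bar u$ a supersolution.

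\textbf{How your route can be completed.} For example, $\bar a=\lceil z^4/18+2z^2\rceil$ with one-hole teeth satisfies the inequality for all $z\ge 0$. A cubic correction $\beta z^3$ would not help: asymptotically it changes the defect $\bar a''-2\sqrt{2\bar a}$ from $\frac19$ to $\frac19+27\beta^2$. After that you still need to check:
\begin{itemize}
\item the tip vertex $z=0$;
\item the origin;
\item that the tooth heights, at most $\frac13 z^2+O(1)$, fit inside $\mathcal B_{(1+\varepsilon)n}$ (start from $R_{(1+\varepsilon/2)n}$).
\end{itemize}

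\textbf{Monotonicity is irrelevant here.} Monotonicity of the true odometer in $|x|$ plays no role in a comparison argument. The tips must be handled by verifying the inequalities for $\bar u$ itself.
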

For the proof of Theorem~\ref{thm:main2}, we establish the inner and outer bounds separately. The inner bound is the simpler part: using the least action principle for divisible sandpiles, we compare the abelian sandpile cluster with the divisible sandpile cluster. A key point in this comparison is that we define the divisible sandpile model so that vertices on the backbone have capacity $3$. The outer bound requires a different argument. We construct the odometer corresponding to the stabilization of $n_k\delta_o$ along a suitable subsequence $(n_k)_{k\in\mathbb{N}}$, and then use the monotonicity of the limit shape with respect to the number of particles initially placed at the origin to obtain the desired result.

\textbf{Outline.} 
In Section~\ref{sec:preliminaries}, we introduce the abelian sandpile model, the comb lattice, and the additional concepts needed for the proofs of our results. In Section~\ref{sec:inf-vol}, we prove Part~\ref{thm:maina} of Theorem~\ref{thm:main} for all recurrent trees. Section~\ref{sec:avalanche} is devoted to the proof of Parts~\ref{thm:mainb} and~\ref{thm:mainc} of Theorem~\ref{thm:main}, concerning the vertical and horizontal spread of avalanches in stationarity. Finally, in Section~\ref{sec:single-source}, we establish the single source limit shape stated in Theorem~\ref{thm:main2}. We conclude with a open questions section.

\section{Preliminaries}
\label{sec:preliminaries}

\textbf{The abelian sandpile model (ASM).}
A comprehensive overview of the abelian sandpile model can be found in \cite{J18}. 
Consider a finite, undirected, connected graph $G=(V \cup \{s\},E)$ where $s$ is a distinguished vertex that acts as a sink. A sandpile configuration on $G$ is a function $\sigma:V \rightarrow \N$,  with $\sigma(v)$ representing the number of particles at the vertex $v$. 
A vertex $v \in V$ is called stable, if $\sigma(v)<\deg(v)$, where $\deg(v)$ denotes the degree of $v$, that is, the number of its neighboring vertices; otherwise, $v$ is called unstable. A sandpile configuration is stable if all vertices in $V$ are stable. A toppling at a vertex consists of sending one particle along each incident edge to its neighbors. Such a toppling is called legal if it is performed only at an unstable vertex. Any particles that reach the sink are absorbed and removed from the system. Since the sink can store particles indefinitely, its particle count is not recorded.
The dynamics of the ASM can be described using the graph Laplacian, a square matrix $\Delta\in {\rm Mat}_{V\cup \{s\}}(\Z)$ whose entries are defined as
\[\Delta_{vw}=
\begin{cases}
    -e_{vw}, &  v \neq w, \, v \sim w,\\
    \deg(v), & v = w,\\
    0, & \text{otherwise,}
\end{cases}\]
where $v \sim w$ indicates that $v$ and $w$ are neighbors in $G$, i.e.~that they are connected by an edge, and $e_{vw}$ is the number of edges between $v$ and $w$. The reduced graph Laplacian $\tilde{\Delta} $ of $G$ is obtained from the graph Laplacian $\Delta$ by removing the row and column corresponding to the sink vertex. We define the toppling operator, which gives the resulting sandpile configuration after toppling a sandpile $\sigma$ at vertex $v \in V$, by
\[(T_v\sigma)(u)=
\begin{cases}
    \sigma(u)-\deg(u), & u=v,\\
    \sigma(u) + e_{vu}, & u \sim v,\\
    \sigma(u), & \text{otherwise.}
\end{cases}\]
Viewing $\sigma$ as a vector indexed by $V$, whose entries record the number of particles at the corresponding vertices, the toppling of a vertex $v$ can equivalently be written in terms of the reduced graph Laplacian as $T_v\sigma=\sigma-\widetilde{\Delta}\delta_v$,
where $\delta_v$ denotes the standard basis vector with entry $1$ at $v$ and entry $0$ at all other vertices. This formulation immediately shows the abelian property of the model: toppling operators commute, that is, $T_vT_u\sigma=T_uT_v\sigma$.
The stabilization $\sigma^\circ$ of a sandpile configuration $\sigma$ is the unique stable configuration obtained after carrying out all possible legal topplings. More precisely, there exists a sequence of legal topplings such that $\sigma^\circ = T_{v_n}\cdots T_{v_1}\sigma$, where the set of toppled vertices $\{v_1,\dots,v_n\}$ is uniquely determined, although the order in which these topplings are performed is not, as a consequence of the abelian property. For  details, see \cite{J18}.

\textbf{Abelian sandpile Markov chain.} 
The dynamics of the abelian sandpile model defines a Markov chain on the set of stable sandpile configurations on $G$. Let $(X_n)_{n\in\mathbb{N}}$ be an i.i.d. sequence of random variables, uniformly distributed on $V$, and let $\sigma_0$ be an arbitrary stable configuration. The abelian sandpile Markov chain $(\sigma_n)_{n\in\mathbb{N}}$ is then defined recursively as follows: given $\sigma_n$, add one particle at the randomly chosen vertex $X_n$ and stabilize the resulting configuration, namely $\sigma_{n+1}=(\sigma_n+\delta_{X_n})^\circ$.
This Markov chain eventually reaches the set of recurrent configurations, which constitutes a unique communicating class. Its stationary distribution is the uniform measure on the set of recurrent configurations; see, for instance, \cite{J18}.

\textbf{Burning bijection.} Dhar \cite{D90}  introduced the burning algorithm as an efficient procedure for determining whether a given stable sandpile is recurrent. The algorithm leads to the burning bijection, due to Majumdar and Dhar \cite{MD92}, establishing a one-to-one correspondence between recurrent sandpile configurations and spanning trees of the underlying graph. In this paper, we use the setup from \cite{LP14}. For each vertex $x\in V$, fix a total ordering $<_x$ on the set $E_x$ of edges incident to $x$. Let $T$ be a spanning tree of $G$. For each vertex $x\neq s$, let $e_T(x)$ denote the first edge on the unique path in $T$ from $x$ to the sink, and let $l_T(x)$ denote the length of this path, that is, the number of edges it contains. The burning bijection assigns to the spanning tree $T$ a recurrent sandpile configuration $\sigma_T: V \rightarrow \N$ defined by $\sigma_T(x)= \deg(x) - 1 - a_T(x) - b_T(x)$
where
\begin{align*}
    a_T(x)&= \vert \{(x,y) \in E_x : l_T(y)<l_T(x)-1\}\vert\\
    b_T(x)&= \vert \{(x,y) \in E_x : l_T(y)=l_T(x)-1 \text{ and } (x,y)<_x e_T(x)\}\vert.
\end{align*}
Under this characterization, a vertex $x$ attains maximal height $\deg(x)-1$ if and only if $a_T(x)=b_T(x)=0$.
The condition $a_T(x)=0$ is equivalent to saying that there is no neighbor $y$ of $x$ whose distance to the sink along the tree $T$ is at least two smaller than that of $x$. The condition $b_T(x)=0$ means that, among those neighbors whose path to the sink is shorter by exactly one, the edge $e_T(x)$ is the smallest with respect to the ordering $<_x$.

\begin{figure}
\begin{minipage}[b]{0.45\textwidth}
\centering
    \begin{tikzpicture}[scale=0.6]
        \foreach \x in {-3,...,2} {
            \draw (\x,0) -- (\x+1,0);}
        
        \foreach \x in {-3,...,3} {
            \foreach \y in {-3,...,2} {
                \draw (\x,\y) -- (\x,\y+1);}
        }
                
        \foreach \x in {-3,...,3} {
            \fill (\x,0) circle (2pt);
            
            \foreach \y in {-3,...,3} {
                \fill (\x,\y) circle (2pt);}
        }
        \node[below left] at (0,0) {$o$};
    \end{tikzpicture}
\end{minipage}
\begin{minipage}[b]{0.45\textwidth}
\centering
    \begin{tikzpicture}[scale=0.6]
        \coordinate (s) at (2.5,4.5);
              
        \foreach \x in {-3,...,3} {
            \draw[lightgray] (\x,3) -- (s);
            \draw[lightgray] (\x,-3) -- (s);}
        \draw[lightgray] (-3,0) -- (s);
        \draw[lightgray] (3,0) -- (s);

        \foreach \x in {-3,...,2} {
            \draw (\x,0) -- (\x+1,0);}
        
        \foreach \x in {-3,...,3} {
            \foreach \y in {-3,...,2} {
                \draw (\x,\y) -- (\x,\y+1);}
        }
                
        \foreach \x in {-3,...,3} {
            \fill (\x,0) circle (2pt);
            
            \foreach \y in {-3,...,3} {
                \fill (\x,\y) circle (2pt);}
        }

        \fill (s) circle (3pt);
        \node[above] at (s) {$s$};
        \node[below left] at (0,0) {$o$};

    \end{tikzpicture}
    \end{minipage}
\caption{A finite portion of $\mathcal{C}_2$ on the left and the comb lattice $\mathcal{C}_2$ with sink $s$ on the right.}
\label{fig:comb-lattice-finite}
    \end{figure}

\textbf{Comb lattice.}
The two-dimensional comb lattice $\mathcal{C}_2$ is obtained from $\mathbb{Z}^2$ by deleting all horizontal edges except those lying on the $x$-axis. A finite portion of $\mathcal{C}_2$ is shown in Figure~\ref{fig:comb-lattice-finite}. Equivalently, one may construct $\mathcal{C}_2$ by starting with the integer line $\mathbb{Z}$, called the \textit{backbone}, and attaching to each of its vertices a vertical copy of $\mathbb{Z}$, consisting of two \textit{teeth}. 
Using the standard embedding of $\mathcal{C}_2$ in $\mathbb{Z}^2$, we label vertices by Cartesian coordinates $z=(x,y)\in\mathbb{Z}^2$. With this notation, $\mathcal{C}_2=(V,E)$ has vertex set $V=\mathbb{Z}^2$ and edge set
\[
E=\{((x,n),(x,m)):\ |n-m|=1\}\cup\{((x,0),(w,0)):\ |x-w|=1\}.
\]
In this paper, we study the abelian sandpile model on finite subgraphs of the comb lattice $\mathcal{C}_2$. We add a sink vertex $s$, through which particles are removed from the system. More precisely, we restrict $\mathcal{C}_2$ to the box of size $(2n+1)\times(2n+1)$ centered at the origin $o=(0,0)$. We then connect the boundary vertices $(x,n)$ and $(x,-n)$, for $x\in\{-n,\ldots,n\}$, as well as the endpoints $(-n,0)$ and $(n,0)$ of the backbone, to the sink vertex as in Figure~\ref{fig:comb-lattice-finite}. We denote by $\nu_n$ the stationary distribution of the sandpile Markov chain on this finite box.

\textbf{The infinite volume limit.}
We now consider the extension of stationary distributions of the sandpile Markov chain to infinite graphs. As before, $G=(V,E)$ is a locally finite, connected, infinite graph. An exhaustion of $G$ is a sequence of finite vertex sets 
$V_1\subset V_2\subset \cdots \subset V$
such that $\bigcup_{i=1}^\infty V_i=V$. For each $n\in\mathbb{N}$, let $G_n=(V_n\cup\{s_n\},E_n)$ be the finite graph obtained by identifying all vertices in $V\setminus V_n$ with a single sink vertex. Denote by $\mu_n$ the stationary distribution of the sandpile Markov chain on $G_n$. 
The infinite volume limit $\mu$ on $G$, when it exists, is defined as the weak limit of the measures $(\mu_n)_{n\in\mathbb{N}}$. More precisely, we say that $(\mu_n)_{n\in\mathbb{N}}$ converges weakly if, for every finite set $A\subseteq V$ and every function $\zeta:A\to\mathbb{Z}$, the limit 
$\lim_{n\to\infty}\mu_n(\eta|_A=\zeta)$
exists. In this case, we define
\[
\mu(\eta|_A=\zeta)
:=
\lim_{n\to\infty}\mu_n(\eta|_A=\zeta).
\]

\textbf{Divisible sandpile.} The divisible sandpile model is a continuous counterpart of the abelian sandpile model, in which mass may be divided during redistribution. To define it precisely, fix a height function $h:\mathcal{C}_2\to\mathbb{R}$, representing the mass capacity at each vertex. A sand distribution is a finitely supported function $\mu:\mathcal{C}_2\to\mathbb{R}_{\geq 0}$. For a vertex $x\in\mathcal{C}_2$, the toppling operator $D_x$ is 
\[
D_x\mu=\mu-\frac{\max\{\mu(x)-h(x),0\}}{\deg(x)}\Delta\delta_x .
\]
Thus, when $x$ is toppled, the excess mass above its capacity $h(x)$ is distributed equally among its neighbors, thus the term \emph{divisble sandpile}.
Let $\mu_0$ be an initial sand distribution, and fix a sequence of vertices $(x_k)_{k\in\mathbb{N}}$ in which every vertex occurs infinitely often. We define a sequence of sand distributions $(\mu_k)_{k\in\mathbb{N}_0}$ inductively by
$\mu_k=D_{x_k}\mu_{k-1}$, $k\geq 1$. We also define the corresponding odometer sequence $(u_k)_{k\in\mathbb{N}}$ by
\[
u_k(y)=\frac{1}{{\deg(y)}}\sum_{\substack{j\leq k:\\ x_j=y}}(\mu_{j-1}(y)-\mu_{j}(y)),
\qquad y\in\mathcal{C}_2 .
\]
The odometer describes the total amount of mass emitted from each vertex during the toppling procedure. A fundamental property of the limiting odometer $u_\infty=\lim_{n\to\infty}u_n$ is that it is the pointwise minimal non-negative function satisfying
$\mu_0-\Delta u_\infty\leq h $.
This characterization is known as the least action principle \cite{Rotor-Zd-Levine-Peres-2008}.
Finally, we note that it is known from \cite{IDLA-comb-2012} that, for the divisible sandpile on the comb lattice started from the sand distribution $n\delta_o$, the cluster of toppled vertices is given by \eqref{eq:Bm}, up to fluctuations of constant order.

\section{Infinite volume limit}\label{sec:inf-vol}

In this section, we prove Theorem~\ref{thm:main}~\ref{thm:maina}. More precisely, we show that the infinite volume limit exists and is the Dirac measure supported on the maximal (saturated) sandpile configuration. The argument applies not only to the comb lattice $\mathcal{C}_2$, but to the broader class of recurrent trees. On recurrent graphs, the wired and free uniform spanning trees have the same infinite volume limit \cite[Proposition~5.6]{BLPS01}. We use this result to show that, on recurrent trees, any recurrent sandpile configuration eventually has maximal height $\deg-1$ when restricted to a fixed finite subset set of vertices.

\begin{proposition}\label{prop:rec-tree}
    Let $\mathcal{T}$ be a recurrent tree, and fix a vertex $r\in\mathcal{T}$, which we call the root. Let $\mu_n$ denote the stationary distribution of the abelian sandpile Markov chain on the ball $B(r,n)$ of radius $n$ centered at $r$, with wired boundary conditions. Then the sequence $(\mu_n)_{n\in\mathbb{N}}$ converges weakly to the Dirac measure $\delta_{\eta_{\mathrm{max}}}$ supported on the saturated sandpile configuration on $\mathcal{T}$.
\end{proposition}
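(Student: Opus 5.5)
We will argue through the burning bijection, so that the statement becomes one about the uniform spanning tree on the wired graph $G_n$. Here $G_n$ is obtained from $B(r,n)$ by identifying all vertices at distance larger than $n$ from $r$ into a sink $s_n$. Since $\mu_n$ is uniform on recurrent configurations, and the burning bijection maps spanning trees of $G_n$ bijectively onto these, $\mu_n$ is the push-forward of the uniform spanning tree measure $\mathsf{UST}_n$ on $G_n$ under $T\mapsto\sigma_T$.

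It suffices to show that for every finite $A\subseteq\mathcal{T}$ we have $\mu_n(\eta(x)=\deg(x)-1 \text{ for all } x\in A)\to1$. Weak convergence to $\delta_{\eta_{\max}}$ then follows, because all other cylinder events have vanishing probability. We may enlarge $A$ to the finite subtree $A'$ spanned by $A\cup\{r\}$, which is connected.

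\textbf{Step 1 (structure of $G_n$).} $G_n$ is a tree plus extra edges from the boundary vertices $\partial_n=\{x: d(r,x)=n\}$ to $s_n$ (possibly multiple edges). Every spanning tree $T$ of $G_n$ is obtained by deleting, for each $x\in\partial_n$, some edges. For a vertex $x$ with $d(r,x)<n$, its tree path to $s_n$ runs inside $\mathcal{T}$ to some boundary vertex and then jumps to $s_n$.

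\textbf{Step 2 (event forcing maximal heights on $A'$).} Consider the event $E_n$ that the path in $T$ from $r$ to $s_n$ exits $A'$ only once. This means the paths from all vertices of $A'$ to the sink pass through a single edge leaving $A'$, at a vertex $v\in A'$. Then within $A'$ the tree $T$ contains all edges of $A'$ (as $A'$ is a subtree and $T$ restricted to it must connect it to the exit). We then check $a_T(x)=b_T(x)=0$ for $x\in A'$. For $x\in A'$, $e_T(x)$ points toward $v$, and every other neighbor $y$ of $x$ lies farther from $v$ in $\mathcal{T}$. Since $\mathcal{T}$ is a tree, the path from $y$ to $s_n$ either goes through $x$, giving $l_T(y)=l_T(x)+1$, or leaves via another route. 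The second case cannot happen for neighbors in $A'$. For a neighbor $y\notin A'$, any route avoiding $x$ ends at the sink via some boundary vertex in the subtree behind $y$; we need $l_T(y)\ge l_T(x)$.

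This is exactly where recurrence enters. Define $F_n$ as the event that for every vertex $y$ adjacent to $A'$, the $T$-path from $y$ to $s_n$ passes through $A'$ except along the unique exit branch. By the wired/free USF coincidence on recurrent graphs \cite[Proposition~5.6]{BLPS01}, $\mathsf{WUSF}$ on the recurrent tree $\mathcal{T}$ equals $\mathsf{FUSF}$, which on a tree is trivially the whole tree. Hence $\mathsf{UST}_n$ restricted to any finite edge set converges to "all edges present". In the wired graph $G_n$, the event that the finite ball $B(r,m)$ (with $m$ large so that $A'\cup\partial A'\subseteq B(r,m)$) lies in a single component of $T\setminus\{s_n\}$ forces all those edges to be present. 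Concretely, the wired USF containing all edges of $B(r,m)$ means no sink-jump separates them. Given this, every vertex of $B(r,m)$ reaches $s_n$ through a unique exit point $v$ of $B(r,m)$, lengths along $T$ within $B(r,m)$ equal graph distance-to-$v$ plus a constant, and the height conditions $a_T=b_T=0$ hold at every $x\in A'$ whose whole neighborhood lies in $B(r,m)$. The one subtlety is $b_T$: each $x$ has exactly one neighbor at level $l_T(x)-1$, so $b_T(x)=0$ regardless of the ordering.

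\textbf{Step 3 (main obstacle).} The hardest step is justifying the convergence of $\mathsf{UST}_n(B(r,m)\subseteq \text{one component of } T\setminus s_n)\to1$ from the BLPS statement. I would argue directly via Kirchhoff: the event fails only if some edge of the finite set $E(B(r,m))$ is absent. Each such edge $e$ is present with probability equal to the effective resistance between its endpoints in $G_n$. On a tree, this resistance is $1-\big(\text{resistance-type quantity}\big)$, which tends to $1$ because the effective resistance from $e$ to $s_n$ diverges by recurrence. A union bound over finitely many edges completes the proof.
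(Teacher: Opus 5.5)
Your proposal is correct and is essentially the paper's argument. Via the burning bijection, the wired UST on a recurrent tree contains every edge of a fixed ball $B(r,m)$ with probability tending to $1$. On that event, every $x$ whose neighbourhood lies in $B(r,m)$ has exactly one neighbour at level $l_T(x)-1$ and all others at level $l_T(x)+1$, so $a_T(x)=b_T(x)=0$.

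Your Kirchhoff computation in Step~3 is a self-contained substitute for citing \cite[Proposition~5.6]{BLPS01}: one gets $\Prob(e\notin T)=1/(1+R_1+R_2)$, where $R_1,R_2$ are the resistances from the endpoints of $e$ to $s_n$ in $G_n\setminus e$, and $R_1+R_2$ diverges by recurrence. Some intermediate claims are inaccurate as stated, though your final argument never uses them. The Step~1 description of spanning trees is wrong, since interior edges can also be missing, which is exactly what Step~3 controls. The events $E_n,F_n$ do not imply what you say. These can simply be deleted.
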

\begin{proof}
    Fix $m\in\mathbb{N}$ and for each $n\in\mathbb{N}$, let $\eta_n$ be a sandpile configuration sampled according to $\mu_n$. By the burning bijection, recurrent sandpile configurations on $B(r,n)$ are in one-to-one correspondence with wired spanning trees of $B(r,n)$. Since $\mathcal{T}$ is recurrent, the wired uniform spanning trees and the free uniform spanning trees have the same infinite volume limit. Moreover, because $\mathcal{T}$ is itself a tree, the infinite volume limit of the free uniform spanning trees is simply $\mathcal{T}$. Hence the uniform volume limit of the wired uniform spanning trees is given by $\mathcal{T}$ itself.

    If $T(\eta_n)$ denotes the spanning tree obtained from $\eta_n$ via the burning bijection, then, with high probability, every vertex $x\in B(r,m)$ is connected to the unique path $\gamma$ from the root $r$ to the boundary of $B(r,n)$. Here, “with high probability” means with probability tending to $1$ as $n\to\infty$. It follows that, for every $x\in B(r,m-1)$, there is a unique neighbor $y\sim x$ such that $l_{T(\eta_n)}(y)<l_{T(\eta_n)}(x)$, while all other neighbors of $x$ are descendants of $x$ in $T(\eta_n)$. Consequently, $a_{T(\eta_n)}(x)=b_{T(\eta_n)}(x)=0$. By the burning bijection, this implies that $\eta_n(x)=\deg_{\mathcal{T}}(x)-1$. Since this holds for every vertex in $B(r,m-1)$, we obtain
 \[
 \lim_{n\to\infty}\Prob(\eta_n\vert_{B(r,m-1)}=\eta_{\text{max}}\vert_{B(r,m-1)})=1.
 \]
Thus the sequence $(\mu_n)_{n\in\mathbb{N}}$ has infinite volume limit $\delta_{\eta_{\mathrm{max}}}$.
\end{proof}

\begin{proof}[Proof of Theorem~\ref{thm:main}~\ref{thm:maina}]
   Since the comb $\mathcal{C}_2$ is a recurrent tree, the claim follows immediately from Proposition~\ref{prop:rec-tree}.
\end{proof}

\section{Structure of avalanches in finite volumes}\label{sec:avalanche}

Theorem~\ref{thm:main}~\ref{thm:maina} establishes that if a sandpile is sampled from the infinite volume limit and a particle is added at the origin, then every vertex topples infinitely many times, that is, avalanches in the infinite volume limit are infinite. This result, however, does not describe the finite volume structure of avalanches before taking the limit. In this section we study avalanches on finite subgraphs of the comb lattice, and we show that the maximal vertical spread of an avalanche has asymptotic behavior different from that of its maximal horizontal spread.

We first prove that, with high probability, there is a path of maximal height vertices starting at the origin and reaching the sink whose horizontal displacement is of order $\sqrt{n}$. We do this by studying the exit location from a finite box of a random walk started at the origin $o=(0,0)$ in the comb lattice, and then constructing appropriate blocking events which ensure that every vertex along the loop-erasure of this random walk path has maximal height.
We then show that the horizontal spread of an avalanche is smaller than $n^{1/2+\delta}$ for every $\delta>0$, by locating the first missing horizontal edge to the right and to the left of the origin in the wired uniform spanning tree. Throughout this section, unless stated otherwise, we assume $0<\delta<1/2$, since Part~\ref{thm:mainc} of Theorem~\ref{thm:main} is trivial for $\delta\geq 1/2$.

\subsection{Full height path}\label{subsec:full-height}

For a sandpile configuration $\eta$, we say that a vertex $x$ has full height or maximal height if $\eta(x)=\deg(x)-1$, and we call a path of consecutive vertices \emph{full height path} if it consists of only full height vertices.
For $n\in \N$, consider the finite box of side length $2n+1$, centered at the origin $o=(0,0)$ in the comb lattice $\mathcal{C}_2$, with its exterior boundary identified with the sink. We prove that, if a recurrent sandpile $\eta_n$ is sampled from the stationary distribution $\nu_n$ of the sandpile Markov chain on this box, then with probability tending to $1$ as $n\to\infty$, there exists a path of maximal height vertices starting at $o$ and reaching the sink through one of the teeth, whose horizontal displacement along the backbone is at least $n^{1/2-\delta}$ for every $\delta>0$.
Indeed, if $\eta_n$ contains a path of maximal height vertices beginning at the origin and ending at the sink, then adding one particle at $o$ forces every vertex on this path to topple. Therefore, the existence of such a full height path proves Theorem~\ref{thm:main}~\ref{thm:mainb} and yields the lower bound in Theorem~\ref{thm:main}~\ref{thm:mainc}. In what follows, whenever we refer to uniform spanning trees in a finite box, we mean wired uniform spanning trees.

We characterize the existence of a full height path from the origin to the sink, whose backbone part has length $k\in\Z$, in terms of the following four sufficient conditions on the spanning tree $T(\eta_n)$ obtained from applying the inverse burning bijection to the sandpile $\eta_n$ sampled from $\nu_n$. See Figure \ref{fig:comb-lattice_events} for an illustration of these events.

\begin{enumerate}[leftmargin=1.8cm, labelwidth=1.57cm]
    \item[$OSP(k)$]\label{osp} Origin-sink path of length $k$: there exists a direct path in the spanning tree from the origin to the sink vertex such that $(k,0)$ is the vertex on this path with maximal horizontal displacement.
    \item[$TH(k)$]\label{thk}   Tooth-hole condition: on each tooth attached to the backbone vertices of the path in $OSP(k)$, other than the one that itself belongs to the path in $OSP(k)$, there is a missing edge in the spanning tree, that is not adjacent to the backbone.
    \item[$PO(k)$]\label{rok} Path obstruction: both neighbors of $(k,0)$ along the backbone are connected to $(k,0)$ in the spanning tree.
    \item[$RHO(k)$]\label{lok} Remaining half obstruction: both neighbors of the origin along the backbone are connected to the origin in the spanning tree.
\end{enumerate}

\begin{figure}
\centering
\begin{minipage}{0.45\textwidth}
\centering
    \begin{tikzpicture}[scale=0.6]
        \coordinate (s) at (3,5.5);
        
        \foreach \x in {-4,...,4} {
            \draw[lightgray] (\x,4) -- (s);
            \draw[lightgray] (\x,-4) -- (s);
        }
        \draw[lightgray] (-4,0) -- (s);
        \draw[lightgray] (4,0) -- (s);

        \draw[green, very thick] (-1,-4)--(s);
        \draw[green, very thick] (-1,4)--(s);
        \draw[green, very thick] (0,-4)--(s);
        \draw[green, very thick] (0,4)--(s);
        \draw[green, very thick] (1,-4)--(s);

        \foreach \x in {-4,...,3} {
            \draw (\x,0) -- (\x+1,0);
        }
        \foreach \x in {-4,...,4} {
            \foreach \y in {-4,...,3} {
                \draw (\x,\y) -- (\x,\y+1);
            }
        }

        \draw[green, very thick] (0,0)--(0,1)--(0,2)--(0,3);

        \draw[green, very thick] (0,0)--(0,-1);
        \draw[green, very thick] (0,-2)--(0,-3)--(0,-4);

        \draw[green, very thick] (1,0)--(1,-1)--(1,-2)--(1,-3);

        \draw[green, very thick] (-1,0)--(-1,1);
        \draw[green, very thick] (-1,2)--(-1,3)--(-1,4);

        \draw[green, very thick] (-1,0)--(-1,-1)--(-1,-2);
        \draw[green, very thick] (-1,-3)--(-1,-4);

        \draw[blue, very thick] (0,0)--(1,0)--(1,1)--(1,2)--(1,3)--(1,4)--(s);

        \foreach \x in {-4,...,4} {
            \fill (\x,0) circle (2pt);
        
            \foreach \y in {-4,...,4} {
                \fill (\x,\y) circle (2pt);
            }
        }
        \fill (s) circle (3pt);
        \node[above] at (s) {$s$};
        \node[below left] at (0,0) {$o$};
    \end{tikzpicture}
\end{minipage}
\begin{minipage}{0.45\textwidth}
\centering
    \begin{tikzpicture}[scale=0.6]
        \coordinate (s) at (3,5.5);
        \foreach \x in {-4,...,4} {
            \draw[lightgray] (\x,4) -- (s);
            \draw[lightgray] (\x,-4) -- (s);
        }
        \draw[lightgray] (-4,0) -- (s);
        \draw[lightgray] (4,0) -- (s);

        \foreach \x in {-4,...,3} {
            \draw (\x,0) -- (\x+1,0);
        }
        \foreach \x in {-4,...,4} {
            \foreach \y in {-4,...,3} {
                \draw (\x,\y) -- (\x,\y+1);
            }
        }
        
        \draw[blue, very thick] (0,0)--(1,0)--(1,1)--(1,2)--(1,3)--(1,4)--(s);
        \draw[orange, very thick] (1,0)--(2,0);
        \draw[orange, very thick] (-1,0)--(0,0);
        
        \foreach \x in {-4,...,4} {
            \fill (\x,0) circle (2pt);
        
            \foreach \y in {-4,...,4} {
                \fill (\x,\y) circle (2pt);
            }
        }
        \fill (s) circle (3pt);
        \node[above] at (s) {$s$};
        \node[below left] at (0,0) {$o$};
    \end{tikzpicture}
\end{minipage}
\caption{The comb $\mathcal{C}_2$ restricted to a $9\times9$ box with sink vertex $s$. The blue path represents an origin-sink path of length $1$ on the backbone $OSP(1)$. On the left, the green paths satisfy the tooth-hole condition $TH(1)$. On the right, the orange paths correspond to the obstructions $PO(1)$ and $RHO(1)$.}
\label{fig:comb-lattice_events}
\end{figure}

First, observe that if $k_1\neq k_2$, then the events $OSP(k_1)$ and $OSP(k_2)$ are disjoint, since any spanning tree contains a unique path from the origin to the sink. Furthermore, if we condition on $OSP(k)$, the path connecting the origin to the sink splits the spanning tree into independent parts, from which it follows that the remaining three events are conditionally independent. 
By the burning bijection, if for some $k\in\mathbb{N}$ all four events occur simultaneously, then there is a full height path from the origin to the sink whose horizontal spread is \(|k|\). Hence our aim is to estimate
$\Prob(OSP(k))$
together with the conditional probabilities
\[
\Prob(TH(k)\mid OSP(k)),\qquad
\Prob(PO(k)\mid OSP(k)),\qquad
\Prob(RHO(k)\mid OSP(k)),
\]
for \(k\in\mathbb{Z}\).
We start with the event $OSP(k)$ for $k\in\Z$.
By Wilson’s algorithm, the event that the uniform spanning tree contains a path from $o$ to the sink such that $(k,0)$ is the vertex of maximal horizontal displacement on this tree path is equivalent to the event that a random walk started at $o$ exits the $(2n+1)\times(2n+1)$ box through one of the teeth attached at $(k,0)$. The probability of this event can be estimated using a one-dimensional killed random walk. Indeed, whenever the walker is at a backbone vertex and moves into one of the attached teeth, it leaves the box before returning to the backbone with probability $\frac{1}{n}$. Let $(\xi_i)_{i\in\mathbb{N}}$ be a sequence of independent identically distributed random variables satisfying $\mathbb{P}(\xi_1=1)=\mathbb{P}(\xi_1=-1)=\frac{1}{2}$,
and define $S_t=\sum_{i=1}^t \xi_i$. We also let $E\sim \operatorname{Geom}\big(\frac{1}{n+1}\big)$ a geometric random variable independent of $(\xi_i)_{i\in\mathbb{N}}$.

\begin{lemma}\label{lem:direct_path}
Let $k\in\mathbb{Z}$, and consider the comb lattice $\mathcal{C}_2$ restricted to the centered box of size $(2n+1)\times(2n+1)$. Let $T$ be a wired uniform spanning tree on this finite graph. Then the probability that $T$ contains a direct path from the origin $o=(0,0)$ to the sink such that $(k,0)$ is the vertex of maximal horizontal displacement along the backbone on this path is given by
$$
\mathbb{P}\bigl(S_E=k \,\big|\, |S_t|\leq n \text{ for all } t\leq E\bigr),
$$
where $S_E=\sum_{i=1}^E \xi_i$.
\end{lemma}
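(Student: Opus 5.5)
The plan is to combine Wilson's algorithm with a reduction of the walk on the comb to a lazy one-dimensional walk on the backbone.

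\textbf{Step 1: reduce to the exit location of a random walk.} Run Wilson's algorithm rooted at the sink $s$, starting with the vertex $o$. Then the unique path in $T$ from $o$ to $s$ is the loop erasure of a simple random walk $(Z_t)$ on the box, started at $o$ and stopped when it first enters $s$. The box graph is a tree together with the sink, so the loop erasure can be read off from the last edge used into $s$:
\begin{itemize}
\item If that edge is $((k,\pm n),s)$, the loop-erased path runs along the backbone from $o$ to $(k,0)$, then straight up or down the tooth at $x=k$ to $s$.
\item If the last edge is $((\pm n,0),s)$, the path runs along the whole backbone.
\end{itemize}
In either case the vertex of maximal horizontal displacement on the path is $(k,0)$, with $k$ the column of the exit edge. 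Hence the event in the statement is exactly the event that $Z$ enters $s$ from column $k$. This is the translation already asserted in the text before the lemma.

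\textbf{Step 2: decompose the walk at backbone visits.} Each visit of $Z$ to a backbone vertex $(x,0)$ with $|x|\le n$ is followed by one of two moves:
\begin{itemize}
\item with probability $1/2$, a horizontal step to $(x\pm1,0)$, each sign with probability $1/4$;
\item with probability $1/2$, a step into $(x,\pm1)$.
\end{itemize}
After a step into a tooth, gambler's ruin on $\{0,\dots,n+1\}$ (where $n+1$ represents $s$) shows that the walk reaches $s$ before returning to $(x,0)$ with probability $1/(n+1)$. With the remaining probability it returns to $(x,0)$, and by the strong Markov property we may restart there.

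Discard the excursions into the teeth that return to the backbone. What remains is a sequence of ``decisive'' moves from the current backbone vertex:
\begin{itemize}
\item a horizontal step, with relative weight $\tfrac12$;
\item an exit through the tooth at the current column, with relative weight $\tfrac12\cdot\tfrac1{n+1}$.
\end{itemize}
Normalising, each decisive move is an exit with probability $1/(n+2)$. The paper's parametrisation uses $\frac{1}{n}$ or $\frac{1}{n+1}$; I will fix whichever normalisation of the teeth length matches the box and check the constant carefully. Decisive moves are independent of each other and of the direction of horizontal steps, and horizontal steps are $\pm1$ with probability $1/2$ each. So the backbone positions visited form a simple random walk $S_t$. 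The number $E$ of horizontal steps before the first tooth exit is geometric and independent of $(\xi_i)$. The walk exits through the tooth at column $S_E$ provided it has not left through a backbone endpoint first, i.e.\ provided $|S_t|\le n$ for all $t\le E$. Here I treat $S_t=\pm(n+1)$ as stepping from $(\pm n,0)$ into $s$.

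\textbf{Step 3: identify the formula, and where I expect difficulty.} The computation gives the joint probability
\[
\mathbb{P}\bigl(S_E=k,\ |S_t|\le n\ \forall t\le E\bigr).
\]
To obtain the conditional form stated in the lemma, I would interpret the statement as conditioning on the event that the tree path leaves through a tooth rather than through a backbone endpoint. Equivalently, I would regard the conditioning as the normalisation making the $OSP(k)$ probabilities sum to one over $|k|\le n$. I would also check that the probability of a backbone exit is negligible, of order $e^{-c\sqrt n}$, since $E$ is of order $n$ and the walk moves only of order $\sqrt n$ in that time.

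I expect the main obstacle to be this bookkeeping at the boundary rather than the probabilistic content. It involves:
\begin{itemize}
\item the precise success parameter of $E$ (whether $1/n$, $1/(n+1)$ or $1/(n+2)$, depending on how the teeth and the degree at the backbone endpoints are set up);
\item the treatment of exits from $(\pm n,0)$;
\item reconciling the joint probability with the stated conditional one.
\end{itemize}
I would settle these by writing out the gambler's-ruin and normalisation computation explicitly for the box with the sink edges as in Figure~\ref{fig:comb-lattice-finite}.
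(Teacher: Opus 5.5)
Your route is the paper's. You use Wilson's algorithm, read the loop erasure off the exit edge (the box is a tree plus the sink), and sum over returning tooth excursions to get a geometric number of horizontal steps. The two points you flag at the end are genuine defects in the statement and in the paper's proof, not gaps in your argument, and you should resolve them rather than hedge.

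First, the geometric parameter. The sink is attached to $(x,\pm n)$, so each tooth is a path of $n+1$ edges from $(x,0)$ to $s$. Gambler's ruin therefore gives escape probability $1/(n+1)$, which is the value the paper itself uses in the tooth-hole estimate; the paper's proof of this lemma uses $1/n$ instead. Your killing parameter $1/(n+2)$ is the correct one, and the paper's $1/(n+1)$ comes from that slip.

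Second, the conditioning. Your decomposition proves exactly $\mathbb{P}(OSP(k))=\mathbb{P}\bigl(S_E=k,\ |S_t|\le n\ \forall t\le E\bigr)$ for $|k|<n$; for $k=\pm n$ you add the probability of a backbone exit on that side. This joint form is the true value. The conditional expression in the lemma exceeds it by the factor $\mathbb{P}(|S_t|\le n\ \forall t\le E)^{-1}>1$, so the lemma does not hold as an exact identity. For example, take $n=1$, $k=0$, with $E$ supported on $\{0,1,\dots\}$:
\begin{itemize}
\item a direct computation on the graph gives $\mathbb{P}(OSP(0))=3/7$, which is your joint formula with parameter $1/3$;
\item the conditional expression gives $3/5$ with parameter $1/3$;
\item it gives $2/3$ with the paper's parameter $1/2$.
\end{itemize}

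So drop the attempt to reinterpret the conditioning. Your ``equivalently'' is also inaccurate. The unconditional probabilities of $OSP(k)$, $|k|\le n$, already sum to one once backbone exits are counted in $OSP(\pm n)$. The conditional law is something else: the law of the exit column given a tooth exit.

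Instead, state the joint identity with parameter $1/(n+2)$, together with your observation that a backbone exit has probability $O(e^{-c\sqrt n})$. That error and the shift $n+1\to n+2$ are invisible wherever the lemma is used, since those uses are asymptotic: Lemma~\ref{lem:estimate-killed-at-sqrt}, Proposition~\ref{prop:full-height-path}, and Section~\ref{sec:hor-spread}. The downstream results are therefore unaffected.
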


\begin{proof}
    By Wilson’s algorithm, the path from the origin to the sink in the uniform spanning tree is distributed as the loop-erasure of a random walk started at $o$ and stopped when it exits the box of size $(2n+1)\times(2n+1)$. At each backbone vertex, this walk moves into one of the two adjacent teeth with probability $1/2$, and once it has entered a tooth, gambler’s ruin estimates show that it reaches the sink before returning to the backbone with probability $1/n$.
    Thus, the probability of connecting to the sink at horizontal displacement $k$ can be represented in terms of a one-dimensional random walk that is killed at $k$, without ever leaving the interval $[-n,n]$. Indeed, if the walk exits this interval along the backbone, then the origin is connected to the sink through the backbone instead. At each backbone vertex, the probability of reaching the sink before making a horizontal step to the left or to the right can be computed by summing over the number of excursions into the attached teeth. More precisely, the walker may enter a tooth $l$ times and reach the sink on its $l$-th such attempt, for some $l\in\mathbb{N}$. Hence the corresponding killing probability is given by
    $$\frac{1}{2n}\sum_{l=0}^\infty \left(\frac{n-1}{2n}\right)^l=\frac{1}{2n}\frac{2n}{n+1}=\frac{1}{n+1}.$$
\end{proof}
It remains to bound the probability that the random walk is killed at a distance of order $\sqrt{n}$.

\begin{lemma}\label{lem:estimate-killed-at-sqrt}
For a simple random walk $(S_t)_{t\in\mathbb{N}_0}$ on $\mathbb{Z}$, killed at an independent geometric random time $E\sim \operatorname{Geom}\left(\frac{1}{n+1}\right)$, we have
$$\lim_{n\to\infty}\mathbb{P}\Big(n^{1/2-\delta}\leq |S_E|\leq n^{1/2+\delta}\,\Big|\,
|S_t|\leq n \text{ for all } t\leq E\Big)=1.
$$
\end{lemma}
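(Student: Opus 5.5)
The plan is to bound the conditional probability from below by a ratio of unconditional quantities:
\[
\mathbb{P}(A\mid C)\ \ge\ \frac{\mathbb{P}(A)-\mathbb{P}(C^c)}{\mathbb{P}(C)},
\]
where $A=\{n^{1/2-\delta}\le |S_E|\le n^{1/2+\delta}\}$ and $C=\{|S_t|\le n \text{ for all } t\le E\}$. Since $\mathbb{P}(C)\le 1$, it suffices to show that $\mathbb{P}(A)\to 1$ and $\mathbb{P}(C^c)\to 0$. Then $\mathbb{P}(A\mid C)\ge \mathbb{P}(A)-\mathbb{P}(C^c)\to 1$.

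First I split on the size of $E$. The variable $E$ is geometric with mean $n+1$, so
\[
\mathbb{P}\bigl(E\notin[n^{1-\delta},n^{1+\delta}]\bigr)\le \bigl(1-(1-\tfrac{1}{n+1})^{n^{1-\delta}}\bigr)+(1-\tfrac{1}{n+1})^{n^{1+\delta}}\le n^{-\delta}+e^{-cn^{\delta}}\to 0 .
\]
On the event $E\le n^{1+\delta}$, I bound $C^c$ by independence of $E$ and $S$, together with Doob's (or Hoeffding's) maximal inequality:
\[
\mathbb{P}\Bigl(\max_{t\le n^{1+\delta}}|S_t|>n\Bigr)\le 2\exp\bigl(-n^2/(2n^{1+\delta})\bigr)=2e^{-n^{1-\delta}/2},
\]
which tends to $0$ because $\delta<1/2$. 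Hence $\mathbb{P}(C^c)\to 0$.

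For the upper part of $A$, I condition on $E=m\le n^{1+\delta/2}$, say, and apply Hoeffding's inequality once more. This gives $\mathbb{P}(|S_m|>n^{1/2+\delta})\le 2\exp(-n^{1+2\delta}/(2n^{1+\delta/2}))\to 0$. The remaining mass, on $\{E>n^{1+\delta/2}\}$, is at most $e^{-cn^{\delta/2}}$, which also tends to $0$.

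The lower part of $A$ is the main obstacle, because it needs anti-concentration of $S_m$ rather than concentration. For $m\ge n^{1-\delta/2}$, I use the local bound $\max_j\mathbb{P}(S_m=j)\le C m^{-1/2}$. This follows from Stirling's formula, since the central binomial coefficient dominates. It gives
\[
\mathbb{P}(|S_m|<n^{1/2-\delta})\le C(2n^{1/2-\delta}+1)\,m^{-1/2}\le C' n^{1/2-\delta}n^{-1/2+\delta/4}=C'n^{-3\delta/4}\to 0 .
\]
The event $\{E<n^{1-\delta/2}\}$ has probability at most $n^{-\delta/2}$. Averaging over $E$ therefore yields $\mathbb{P}(|S_E|<n^{1/2-\delta})\to 0$.

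Combining the three estimates gives $\mathbb{P}(A)\to 1$ and $\mathbb{P}(C^c)\to 0$, which by the opening bound proves the lemma. The only delicate point is the exponent bookkeeping. The window for $E$ must be narrow enough that $\sqrt{E}$ lies strictly inside $[n^{1/2-\delta},n^{1/2+\delta}]$, yet wide enough to capture almost all of the geometric mass. Choosing the window $[n^{1-\delta/2},n^{1+\delta/2}]$ achieves both.
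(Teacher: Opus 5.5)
Your proof is correct, and its skeleton is the paper's: confine $E$ to a typical range, show the walk is unlikely to leave $[-n,n]$ before time $E$, and control both tails of $|S_m|$ uniformly for $m$ in that range. The difference lies in the tools.

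\textbf{The paper's route.} It takes the window $[C^{-1}n,Cn]$, with $C=C(\varepsilon)$ chosen so that this window carries mass at least $1-\varepsilon$. It then invokes ``standard large deviation estimates'' for the exit event and the central limit theorem, uniformly in $k$ in the window, for both tails, and finally lets $\varepsilon\to0$.

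\textbf{Your route.} You use the polynomially wide window $[n^{1-\delta/2},n^{1+\delta/2}]$, which carries mass $1-O(n^{-\delta/2})$. You replace the CLT by two explicit estimates: Hoeffding's maximal inequality for the exit event and the upper tail, and the local bound $\max_j\mathbb{P}(S_m=j)\le Cm^{-1/2}$ for the lower tail.

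\textbf{What each buys.} Your argument is non-asymptotic and gives explicit rates. It also avoids the uniform-in-$k$ CLT step. Finally, it spells out the reduction $\mathbb{P}(A\mid C)\ge \mathbb{P}(A)-\mathbb{P}(C^c)$, which the paper compresses into ``it is enough to estimate.'' The paper's version is shorter but rests on the $\varepsilon$--$C$ bookkeeping.
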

\begin{proof}
    Let $\varepsilon>0$, and fix a constant $C>0$, to be chosen later. With high probability, the geometric clock $E$ rings before time $Cn$, and over a time interval of order $n$, a simple random walk on $\Z$ typically reaches distances of order $\sqrt{n}$. Standard large deviation estimates imply $\lim_{n\to\infty}\Prob\bigl(\exists\, t\leq Cn:\ |S_t|>n\bigr)=0$.
     Thus, it is enough to estimate the probabilities  $\Prob\big(|S_E|\geq n^{1/2-\delta}\big)$ and $\Prob\big(|S_E|\leq n^{1/2+\delta}\big)$. Since $E$ is independent of the random walk, we get
    $$\Prob\big(|S_E|\geq n^{1/2-\delta}\big)\geq\sum_{k=C^{-1}n}^{Cn}\Prob(E=k)\Prob\big(|S_k|\geq n^{1/2-\delta}\big).$$
    Let us choose $C>0$ such that $\Prob\big(C^{-1}n \leq E \leq Cn\big) \geq 1-\varepsilon$. Notice also that, by the central limit theorem, there exists $N_0 \in \mathbb{N}$ such that, for every $n \geq N_0$ and every $k$ with $C^{-1}n \leq k \leq Cn$, we have $\Prob\big(\frac{|S_k|}{k} \geq n^{1/2-\delta}\big) \geq 1-\varepsilon$. Combining these two observations, we obtain 
    $$\sum_{k=C^{-1}n}^{Cn}\Prob(E=k)\Prob\big(|S_k|\geq n^{1/2-\delta}\big)\geq (1-\varepsilon)\Prob\big(C^{-1}n\leq E\leq Cn\big)\geq (1-\varepsilon)^2.$$
    Since $\varepsilon>0$ was arbitrary, we obtain $\lim_{n\to\infty}\Prob(|S_E|\geq n^{1/2-\delta})=1$. 
    To bound $\Prob\big(|S_E|\leq n^{1/2+\delta}\big)$, again by  the central limit theorem, for all sufficiently large $n$,
  $\Prob\big(\frac{|S_k|}{k}\leq n^{1/2+\delta}\big)>1-\varepsilon$ for $k$ satisfying $C^{-1}n\leq k\leq Cn$. Arguing as before, we obtain $\lim_{n\to\infty}\Prob\big(|S_E|\leq n^{1/2+\delta}\big)=1$. Combining this with the corresponding lower bound, proves the claim.
\end{proof}

Using the construction of paths in a uniform spanning tree via killed random walks, together with the estimates from Lemma \ref{lem:estimate-killed-at-sqrt}, we can now prove the existence of a full height path with horizontal displacement at least $n^{1/2-\delta}$.

\begin{proposition}\label{prop:full-height-path}
For a sandpile $\eta_n$ sampled according to the stationary measure $\nu_n$ of the abelian sandpile Markov chain on the box of side length $2n+1$ centered at the origin $o=(0,0)$ of the comb $\mathcal{C}_2$, with probability tending to $1$ as $n\to\infty$, there exists a simple path $\gamma:[0,t]\to \mathcal{C}_2$, where $t\in\mathbb{N}$, which starts at $o=(0,0)$ and reaches the boundary of the box through one of the teeth, such that
$$\eta_n\big(\gamma(i)\big)=\deg(\gamma(i))-1\text{ for all }0\leq i\leq t,\hspace{0.5cm}\text{ and }\hspace{0.5cm}\text{Hor}\big(\{\gamma(i)\;|\;0\leq i\leq t\}\big)\geq n^{1/2-\delta}.$$
\end{proposition}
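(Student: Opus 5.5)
We will prove the proposition by combining the four events $OSP(k)$, $TH(k)$, $PO(k)$, $RHO(k)$ introduced above. We sum over the backbone displacements $k$ with $n^{1/2-\delta}\le |k|\le n^{1/2+\delta}$, using the conditional independence of $TH(k)$, $PO(k)$ and $RHO(k)$ given $OSP(k)$. The first step is to verify the burning-bijection claim: on $OSP(k)\cap TH(k)\cap PO(k)\cap RHO(k)$, every vertex of the tree path $\gamma$ from $o$ to the sink has full height. Say $k>0$. By the definition of $OSP(k)$, $\gamma$ runs along the backbone from $(0,0)$ to $(k,0)$ and then along a tooth at $(k,0)$ to the sink.
\begin{itemize}
\item For a backbone vertex $(j,0)$ on $\gamma$, its tree parent is the next vertex of $\gamma$, and its previous path vertex is a child. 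Its two tooth neighbours $(j,\pm1)$ must be descendants. This holds exactly when each of these teeth has its missing edge away from the backbone, which is $TH(k)$. Without this, $(j,\pm 1)$ could reach the sink in about $n$ steps while $(j,0)$ needs $|k-j|+n$ steps, producing $a_T>0$.
\item At $(k,0)$, the backbone neighbours must hang below $(k,0)$; this is $PO(k)$.
\item At the origin, the neighbour $(-1,0)$ must hang below $o$; this is $RHO(k)$.
\item Vertices inside the exit tooth have only tooth neighbours, one of which is their child.
\end{itemize}
In every case each path vertex has a unique neighbour at tree distance $l_T(x)-1$ and all other neighbours at larger distance. Hence $a_T=b_T=0$, and $\gamma$ is a full height path with $\mathrm{Hor}\ge |k|$.

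Next, conditionally on $OSP(k)$, I would use the spatial Markov property of the wired UST. The remaining forest is a wired UST of the graph in which $\gamma$ is glued to the sink, so the different teeth and the two backbone pieces become independent components. Consider a tooth attached to a path vertex $(j,0)$, other than the exit tooth. It is then a path of $n$ edges wired at both ends, so its UST is obtained by deleting exactly one edge, uniformly at random. The probability that the deleted edge is the backbone-adjacent one is $1/n$. Over the at most $2|k|+2$ such teeth, this gives
\[
\Prob(TH(k)\mid OSP(k))\ge \Big(1-\tfrac1n\Big)^{2|k|+2}\ge 1-\tfrac{2n^{1/2+\delta}+2}{n}\longrightarrow 1
\]
uniformly in $|k|\le n^{1/2+\delta}$, since $\delta<1/2$.

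For $PO(k)$ and $RHO(k)$ I would use Wilson's algorithm rooted at the glued sink. For instance, start the walk at $(k+1,0)$; the event is that its loop erasure steps directly to $(k,0)$. By the reduction in Lemma~\ref{lem:direct_path}, this walk is a one-dimensional walk killed at rate $1/(n+1)$ per backbone visit. The relevant event is that this walk hits $k$ before being killed or reaching $n$, and that its first step out of $k+1$ in the loop-erased sense is to the left. I expect the needed estimate to be the probability that a killed walk started at distance $1$ hits the target before killing, which is $1-O(n^{-1/2})$. The event at $(-1,0)$ relative to $o$ is handled identically. This step is the main obstacle I expect. One must argue carefully that the loop erasure, and not just the walk, ends with the edge to $(k,0)$. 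This holds because the walk is absorbed at the glued path upon hitting $(k,0)$, so the last edge is that edge. One must also check that excursions into the teeth at $(k+1,0)$ do not change the order of the estimate.

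Finally, I would combine the pieces:
\[
\nu_n(\exists\gamma)\ \ge \sum_{n^{1/2-\delta}\le|k|\le n^{1/2+\delta}}\Prob(OSP(k))\,\big(1-o(1)\big),
\]
with the $o(1)$ uniform in $k$. By Lemma~\ref{lem:direct_path} the sum equals $\Prob(n^{1/2-\delta}\le |S_E|\le n^{1/2+\delta},\ |S_t|\le n\ \forall t\le E)$. Lemma~\ref{lem:estimate-killed-at-sqrt}, together with the fact noted in its proof that the walk stays in $[-n,n]$ with high probability, shows this tends to $1$, which proves the proposition.
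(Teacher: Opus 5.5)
Your proposal is correct and follows essentially the same route as the paper. Both use the four events $OSP(k)$, $TH(k)$, $PO(k)$, $RHO(k)$ summed over $n^{1/2-\delta}\le|k|\le n^{1/2+\delta}$, the exact one-missing-edge computation on each glued tooth for $TH(k)$, Wilson's algorithm from the off-path backbone neighbour for $PO(k)$ and $RHO(k)$, and Lemma~\ref{lem:estimate-killed-at-sqrt} for $\sum_k\Prob(OSP(k))$. You are more explicit than the paper in checking $a_T=b_T=0$ along $\gamma$ and in giving the $1-O(n^{-1/2})$ hitting estimate for the obstruction events, where the paper only says escaping requires travelling distance of order $n$. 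Your one slip is harmless: a glued tooth is a cycle of $n+1$ edges, so the deletion probability is $1/(n+1)$ rather than $1/n$, and your $(1-1/n)^{2|k|+2}$ remains a valid lower bound.
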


\begin{proof}
    By the burning bijection, it suffices to show that as $n\to\infty$
    \[
    \sum_{\substack{k\in\Z:\\ n^{1/2-\delta}\leq|k|\leq n^{1/2+\delta}}}\Prob\big(TH(k)\;|\;OSP(k)\big)\Prob\big(PO(k)\;|\;OSP(k)\big)\Prob\big(RHO(k)\;|\;OSP(k)\big)\Prob\big(OSP(k)\big)\to 1.
    \]
    Fix an arbitrary $\varepsilon>0$ and consider $k\in\mathbb{Z}$ satisfying $n^{1/2-\delta}\leq |k|\leq n^{1/2+\delta}$. We first estimate the conditional probability $\Prob\big(TH(k)\mid OSP(k)\big)$.
    Conditioned on the event $OSP(k)$, the teeth attached to the origin-sink path are sampled independently. Since each tooth is one-dimensional, the probability that the edge adjacent to the backbone is absent in $T(\eta_n)$ equals $1/(n+1)$. Therefore,
    \[
    \Prob\big(TH(k)\;|\;OSP(k)\big)=\Big(1-\frac{1}{n+1}\Big)^{2|k|+1}\geq \Big(1-\frac{1}{n+1}\Big)^{2n^{1/2+\delta}+1}.
    \]
    The right-hand side tends to $1$ as $n\to\infty$ provided that $\delta<1/2$. Hence, we may choose $N_1\in\mathbb{N}$ such that, for all $n\geq N_1$,  $\Prob\big(TH(k)\mid OSP(k)\big)\geq 1-\varepsilon $.
    Next we consider the probability of the path obstruction event $\Prob\big(PO(k)\;|\;OSP(k)\big)$.
    We again use Wilson's algorithm to characterize this event in terms of a random walk started from the backbone neighbor of $(k,0)$ that does not belong to the origin-sink path in the event $OSP(k)$. For the event $PO(k)$ not to occur, the random walk must leave the box of size $(2n+1)\times(2n+1)$ before hitting $(k,0)$. Since $|k|\leq n^{1/2+\delta}$, the remaining distance along the backbone to the boundary of the box is of order $n$, and the same is true for the distances along the teeth.   Thus, the probability of this event goes to $0$ as $n\to \infty$. Therefore, we can choose $N_2\in\mathbb{N}$ such that, for every $n\geq N_2$, $\Prob\big(PO(k)\mid OSP(k)\big)\geq 1-\varepsilon$. Analogously, there exists $N_3\in\mathbb{N}$ such that, for all $n\geq N_3$, it holds that $\Prob\big(RHO(k)\mid OSP(k)\big)\geq 1-\varepsilon$.
    All together, we obtain
    \begin{align*}
        \sum_{k\in\Z:n^{1/2-\delta}\leq|k|\leq n^{1/2+\delta}}\Prob\big(TH(k)&\;|\;OSP(k)\big)\Prob\big(PO(k)\;|\;OSP(k)\big)\Prob\big(RHO(k)\;|\;OSP(k)\big)\Prob\big(OSP(k)\big)\\
        &\geq (1-\varepsilon)^3\sum_{k\in\Z:n^{1/2-\delta}\leq|k|\leq n^{1/2+\delta}}\Prob\big(OSP(k)\big).
    \end{align*}
    The sum on the right-hand side above converges to $1$ by Lemma \ref{lem:estimate-killed-at-sqrt} and the discussion above Lemma \ref{lem:estimate-killed-at-sqrt}, so the claim follows since $\varepsilon>0$ was arbitrary.
\end{proof}
From this, we obtain Theorem~\ref{thm:main}~\ref{thm:mainb}  and the lower bound in Theorem~\ref{thm:main}~\ref{thm:mainc}.

\begin{proof}[Proof of Theorem~\ref{thm:main}~\ref{thm:mainb}  and the lower bound in Theorem~\ref{thm:main}~\ref{thm:mainc}]
Observe that if $\eta_n$ contains a path of maximal height vertices starting at the origin and reaching the sink through one of the teeth, then adding a particle to $\eta_n$ at the origin necessarily produces an avalanche with vertical spread equal to $n$. By Proposition \ref{prop:full-height-path}, such a path with horizontal displacement at least $n^{1/2-\delta}$ exists with probability tending to $1$, from which the claim follows.
\end{proof}

\subsection{Horizontal spread of avalanches}\label{sec:hor-spread}

Here we prove the upper bound in Theorem~\ref{thm:main}~\ref{thm:mainc}.  The results of Subsection \ref{subsec:full-height} already imply that, for every $\delta>0$, the horizontal spread of an avalanche is, with high probability, larger than $n^{1/2-\delta}$. It remains to establish the matching upper bound. For this purpose, it is enough to find vertices on the backbone, both to the left and to the right of the origin, that are within distance $n^{1/2+\delta}$ and are not of maximal height $3$. In general, the presence of vertices that do not have maximal height does not by itself control the size of an avalanche. However, on the comb lattice it does suffice, as shown by the following lemma.

\begin{lemma}\label{lem:not-full-height-aval}
  Take a stable sandpile configuration $\eta:\mathcal{C}_2\to\mathbb{Z}$, and suppose that there exists $m\in\mathbb{Z}$ such that $\eta((m,0))<3$. Then the vertex $(m,0)$ does not topple during the stabilization of $\eta+\delta_o$.
\end{lemma}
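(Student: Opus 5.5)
Without loss of generality $m>0$; the case $m<0$ is symmetric, and $m=0$ cannot occur as stated. The case $m=0$ needs a separate remark: if $\eta(o)<3$, then $\eta(o)+1\le 3$, so $o$ is stable after the addition and nothing topples. For $m>0$ the plan is a cut argument. The vertex $(m,0)$ separates $\mathcal{C}_2$ (or the finite box) into the part containing $o$ and the part to its right. Moreover, particles can reach $(m,0)$ from the left only along the single backbone edge $((m-1,0),(m,0))$.

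The key steps, in order, are as follows.

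First, use the abelian property and a least action argument to conclude that any legal toppling sequence gives the same toppled set. It therefore suffices to exhibit one legal sequence that stabilizes $\eta+\delta_o$ without toppling $(m,0)$. Equivalently, we show by induction along any legal toppling sequence that $(m,0)$ never becomes unstable.

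Second, suppose for contradiction that $(m,0)$ topples, and consider the first time it becomes unstable. Before this time, $(m,0)$ has not toppled. Hence no vertex on its teeth or to its right has received particles through $(m,0)$. Since $(m,0)$ is a cut vertex and the source $o$ lies to the left, no vertex in the component to the right of $(m,0)$, including the teeth at $(m,0)$, has received any particle. Those vertices are stable and have not toppled. So before this time, the only particles that $(m,0)$ has received come from topplings of $(m-1,0)$.

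Third, bound the number of topplings of $(m-1,0)$ before $(m,0)$ first topples. This is the crux. The natural claim is that $(m-1,0)$ topples at most once before $(m,0)$ topples, and more generally that along the backbone each vertex topples at most once more than its right neighbour. The mechanism is that once $(m-1,0)$ has toppled, it can topple again only after receiving $4$ particles. It gets at most one particle from $(m,0)$, which has not toppled. Its teeth return particles only after they have toppled, and on a one-dimensional tooth $t$ topplings at the base yield at most $t$ returned particles. So it must receive particles from $(m-2,0)$, and we induct leftwards.

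We need one more ingredient. On a single tooth started from a stable configuration and fed one particle per toppling of its base, the bottom vertex topples at most as many times as the base. This is a standard one-dimensional fact, provable by the same leftward-type induction along the tooth. With it we expect the inductive invariant to be
\[
u(x,0)\le u(x+1,0)+1 \quad\text{for } 0\le x< m,
\]
where $u$ denotes the odometer up to the current time.

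Given that invariant, $(m,0)$ receives at most $u(m-1,0)\le 1$ particle before its first toppling. Hence its height stays at most $\eta((m,0))+1\le 3<4$, so it never becomes unstable. This contradicts the assumption and proves the claim.

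The main obstacle is proving the invariant rigorously, since particles bounce back and forth between backbone vertices. I would try a potential or counting argument. Consider the net flow across the edge $((x,0),(x+1,0))$, which equals $u(x,0)-u(x+1,0)$. The total mass to the right of that edge must be nonnegative and bounded by its stable capacity plus the one unstable vertex, and that part initially carries no added particle. If this flow-counting route fails, I would fall back on choosing a specific toppling order, namely toppling in waves from $o$ as in Dhar's wave decomposition. In each wave every vertex topples at most once, and a wave cannot pass a non-full vertex $(m,0)$ unless $(m,0)$ received a particle in an earlier wave. I would then need to check that earlier waves never feed $(m,0)$ enough particles, using that each wave sends at most one particle across the edge into $(m,0)$ only if $(m-1,0)$ topples in that wave.
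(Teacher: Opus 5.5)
Your reduction is sound. By the abelian property one legal order suffices, and since $(m,0)$ is a cut vertex it receives particles only from $(m-1,0)$ before its first toppling. So everything hinges on showing that $(m-1,0)$ cannot topple again after a toppling that did not reach $(m,0)$. The invariant you guess, $u(x,0)\le u(x+1,0)+1$, is true for the final odometer. But you never prove it, and that is the whole content of the lemma.

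Your primary route cannot work. The net flow $u(x,0)-u(x+1,0)$ across the edge equals the gain in mass to the right of it plus what is lost to the sink there. Holes on the teeth and the sink can absorb arbitrarily many particles, so mass conservation gives no bound of $1$.

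The ``induct leftwards'' sketch has no precise inductive statement and no base case. Knowing that $(m-2,0)$ must topple between two topplings of $(m-1,0)$ is not by itself a contradiction. Two smaller slips: $(m,0)$ sends no particle before toppling, not ``at most one''. And in an arbitrary legal order a vertex may hold more than $4$ particles when it topples, so ``it needs $4$ more'' is not accurate. Your wave fallback is the route the paper takes, but you stop at ``I would then need to check'', which is exactly what must be proved.

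The missing idea is that the right ends of successive waves along the backbone strictly decrease. Precisely: if $(x,0)$ topples in wave $k$ but $(x+1,0)$ does not, then $(x,0)$ topples in no later wave.

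For $x\ge 1$ the argument runs as follows.
\begin{itemize}
\item Within any wave, $(x,0)$ receives only the single particle from $(x-1,0)$ before it topples. Its teeth and its right branch are stable at the start of the wave and can only be fed through $(x,0)$.
\item So $(x,0)$ topples in a wave only if it starts that wave at height $3$.
\item In wave $k$ it therefore ends with at most $2$ particles: at most one back from each tooth, none from $(x+1,0)$.
\item To topple in a later wave $k'$ it must first be refilled by some wave $j$ with $k<j<k'$ that reaches $(x-1,0)$ but not $(x,0)$.
\item Since waves are connected and contain $o$, $(x-1,0)$ also topples in wave $k'$. This is the same situation one step to the left.
\end{itemize}
At $x=0$ the situation is impossible. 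If $(1,0)$ does not topple in wave $j$, then $o$ ends that wave with at most $3$ particles, so no later wave exists.

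Apply this at $x=m-1$. For $(m,0)$ to topple in some wave $k$, it must first collect $3-\eta((m,0))\ge 1$ particles in earlier waves that reach $(m-1,0)$ but not $(m,0)$. Then $(m-1,0)$ topples again in wave $k$, which contradicts the claim.
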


\begin{proof}
    Assume, without loss of generality, that $m>0$. Also, let us assume that $m$ is the smallest such value, that is for all $i<m$ it holds that $\eta((i,0))=3$. For $k\in\mathbb{N}$, define the $k$-th wave as the set of vertices that topple when, during the stabilization of $\eta+\delta_o$, each unstable vertex is allowed to topple at most $k$ times. Thus, in the first wave, the origin $o$ is toppled once, and then every other unstable vertex is toppled once, while $o$ is not allowed to topple a second time. Since the comb lattice is a tree, the vertex $(m,0)$ cannot topple during the first wave. Consequently, after the first wave, the vertex $(m-1,0)$ contains at most $2$ particles. It follows that $(m-1,0)$ cannot topple during the second wave, and hence $(m,0)$ cannot topple during the second wave either. If $m=1$, the avalanche stops at this point. If $m\geq 2$, then after the second wave the vertex $(m-2,0)$ contains at most $2$ particles, since $(m-1,0)$ did not topple. Continuing this argument inductively, we conclude that $(m,0)$ never topples during the stabilization. This 
    proves the claim
\end{proof}

To find a vertex in the sandpile configuration that is not of maximal height, it is enough to identify a missing backbone edge in the corresponding uniform spanning tree, as shown by the following lemma.

\begin{lemma}\label{lem:height-edge-missing}
For $n\in\mathbb{N}$, let $\eta_n$ be sampled from the stationary measure $\nu_n$ of the abelian sandpile Markov chain on the box of side length $2n+1$ centered at the origin $o=(0,0)$ in the comb lattice $\mathcal{C}_2$. Fix $0<\delta<1/2$, and let $(m,0)$ and $(m+1,0)$ be adjacent backbone vertices with $0\leq m\leq n^{1/2+\delta}$.
Let $M_m$ denote the event that the edge $\{(m,0),(m+1,0)\}$ is the first missing backbone edge to the right of the origin in the spanning tree $T(\eta_n)$. Then it holds that
    $$\lim_{n\to\infty}\mathbb{P}\big(\eta_n\big((m,0)\big)<3\text{ or }\eta_n((m+1,0))<3 \;\vert\; M_m\big)=1.$$
\end{lemma}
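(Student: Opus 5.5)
The idea is to translate the missing backbone edge into a statement about tree distances to the sink, and then use the burning bijection. Write $x=(m,0)$, $y=(m+1,0)$ and $T=T(\eta_n)$. If $|l_T(x)-l_T(y)|\ge 2$, say $l_T(y)\le l_T(x)-2$, then the non-tree edge $\{x,y\}$ contributes to $a_T(x)$. Hence $a_T(x)\ge1$, and $\eta_n(x)<3$ by the formula $\sigma_T(x)=\deg(x)-1-a_T(x)-b_T(x)$. The symmetric case gives $\eta_n(y)<3$. So it suffices to show
\[
\lim_{n\to\infty}\Prob\big(|l_T(x)-l_T(y)|\le 1 \;\big|\; M_m\big)=0 .
\]
I deliberately avoid the case $|l_T(x)-l_T(y)|=1$, where the conclusion would depend on the ordering $<_x$ through $b_T$.

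\emph{Structure of the two tree paths.} On $M_m$, all backbone edges between $o$ and $x$ are present, so $x$ lies in the same branch as $o$. Since the comb is a tree and $\{x,y\}\notin T$, the tree path from $y$ to the sink uses only vertices with first coordinate $\ge m+1$. Except on an event of vanishing probability (exit through the right end of the backbone, controlled as in Lemma~\ref{lem:estimate-killed-at-sqrt}), each path runs along the backbone to some column and then straight up or down a tooth of length $n$ to the sink. This gives
\[
l_T(x)=|k_x-m|+n+1,\qquad l_T(y)=(k_y-m-1)+n+1,
\]
where $k_x,k_y$ are the exit columns. The event $|l_T(x)-l_T(y)|\le1$ therefore forces $k_y$ to lie in a set of at most three integers that is determined by $k_x$.

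\emph{Anti-concentration.} I run Wilson's algorithm with the root branch first: the path from $o$, then the branch through $x$. Then I condition on everything revealed about the left side, including $k_x$ and the event that the edge $\{x,y\}$ is missing. The path from $y$ is the loop-erasure of a walk started at $y$ and stopped at the sink, conditioned not to step onto $x$. Its horizontal projection is the one-dimensional killed walk of Lemma~\ref{lem:direct_path}, restricted to the half-line $\{m+1,m+2,\dots\}$ and conditioned to avoid $m$; the horizontal coordinate of the killing point is exactly $k_y$. The main step is a uniform local bound
\[
\sup_{j}\Prob\big(k_y=j \,\big|\, \text{left side}\big)\le C n^{-1/2+\delta'}\to0 .
\]
To get it, I would write the killing point distribution as $\sum_t \Prob(E=t)\Prob(S_t=j,\ \text{no visit to } m)$, normalised by the probability of avoiding $m$. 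That normalising probability is only of order $n^{-1/2}$ (ballot/gambler's ruin), so I would use a local limit estimate for the walk on the half-line with killing rate $\frac1{n+1}$. The resulting killing point is spread on scale $\sqrt n$, with point masses of order $n^{-1/2}$ uniformly in $j$. This local bound is the main obstacle, and it is where the argument must be careful with the conditioning on avoiding $m$.

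Given the local bound, the target probability is at most $3Cn^{-1/2+\delta'}$, plus the vanishing probability of a backbone exit, and this tends to $0$. The hypothesis $m\le n^{1/2+\delta}$ is used only to ensure that the right end of the backbone is at distance of order $n$, so backbone exits are negligible.
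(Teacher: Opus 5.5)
Your proposal is correct and follows essentially the same route as the paper. The steps match: reduce via the burning bijection to $|l_T((m,0))-l_T((m+1,0))|\le 1$, use that conditionally on $M_m$ the right half is an independent wired uniform spanning tree, and show that the exit column of its branch from $(m+1,0)$ is anti-concentrated via the one-dimensional killed walk. You are more careful than the paper about the conditioning on avoiding $m$, which the paper covers only by citing local CLTs. The local bound you flag as the main obstacle follows from the method of images. For the killed walk started at $m+1$ and conditioned to avoid $m$, the killing point is, up to the exponentially negligible effect of the far end of the backbone, exactly geometric: $\Prob(k_y=m+j)=(1-\rho)\rho^{j-1}$ with $1-\rho\sim\sqrt{2/n}$.
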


\begin{proof}
    By the burning bijection, if $\eta_n$ is a recurrent sandpile such that $\{(m,0),(m+1,0)\}\notin T(\eta_n)$, then $\eta_n((m,0))=\eta_n((m+1,0))=3$, and so the lengths of their respective paths to the sink in $T(\eta_n)$ differ by at most one, that is,  $\vert l_{T(\eta_n)}((m,0))-l_{T(\eta_n)}((m+1,0))\vert\leq 1$. Thus
    \[
    \Prob\big(\eta_n((m,0))=3\text{ and }\eta_n((m+1,0))=3\;\vert\; M_m\big)\leq \Prob(|l_{T(\eta_n)}((m,0))-l_{T(\eta_n)}((m+1,0))|\leq 1\;\vert\; M_m)\,.
    \]
    Conditionally on $M_m$, the spanning tree $T(\eta_n)$ splits into two independent components. In the component containing $(m+1,0)$, the quantity $l_{T(\eta_n)}((m+1,0))$ is the length of the loop-erased random walk from $(m+1,0)$ to the sink, as described in Subsection \ref{subsec:full-height}. Since all teeth have the same length, this length is determined only by the horizontal distance traveled by the loop-erased random walk. We denote this horizontal distance by $X_{m+1}$. Then it holds
    \begin{align*}
        \Prob\big(|l_{T(\eta_n)}((m,0))-&l_{T(\eta_n)}((m+1,0))|\leq 1\;\vert\; M_m\big)=\Prob(\vert l_{T(\eta_n)}((m,0))-X_{m+1}-n\vert\leq 1\;\vert\;M_m)\\&=\sum_{k\in\N}\Prob(l_{T(\eta_n)}((m,0))=n+k\;\vert\;M_m)\Prob(X_{m+1}\in\{k-1,k,k+1\})
        \\&\leq 3\max_{k\in\N}\Prob(X_{m+1}=k) \sum_{k\in\N}\Prob(l_{T(\eta_n)}((m,0))=n+k\;\vert\;M_m)\\&=3\max_{k\in\N}\Prob(X_{m+1}=k).
    \end{align*}
    From the representation of $X_{m+1}$ via a one dimensional killed random walk as in Subsection \ref{subsec:full-height} and one dimensional local CLTs, it follows that $\lim_{n\to\infty}3\max_{k\in\N}\Prob(X_{m+1}=k)=0$. Notice that we use for this limit that $m\leq n^{1/2+\delta}$. Thus the claim follows.
\end{proof}
From the above proof we also obtain
\begin{equation}\label{eq:pb-missing-edge}
    \lim_{n\to\infty}\max_{m\in[0,n^{1/2+\delta}]}\Prob\Big(\forall m\in\{0,...,n^{1/2+\delta}\}:\eta_n((m,0))=3\;\big\vert\;M_m\Big)=0.
\end{equation}

It remains to show that, with high probability, there is a missing edge in the uniform spanning tree at a sufficiently small distance to the right of the origin. This is established in the following lemma.

\begin{lemma}\label{lem:missing-edge}
Let $0<\delta<1/2$, and let $T_n$ be the uniform spanning tree on the $(2n+1)\times(2n+1)$ box of the comb lattice with wired boundary conditions. Then
\[
\lim_{n\to\infty}\Prob\Big(\exists\, m\in\{0,\ldots,n^{1/2+\delta}\}:\{(m,0),(m+1,0)\}\notin T_n\Big)=1.
\]
Thus, with high probability, there is a missing horizontal edge in $T_n$ within distance $n^{1/2+\delta}$ of $o=(0,0)$.
\end{lemma}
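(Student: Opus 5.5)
We reduce the existence of a missing backbone edge to a statement about where loop-erased random walks from backbone vertices first reach the sink, and then use Wilson's algorithm together with Lemma~\ref{lem:direct_path} and Lemma~\ref{lem:estimate-killed-at-sqrt}. Set $L=\lfloor n^{1/2+\delta}\rfloor$. Suppose all edges $\{(m,0),(m+1,0)\}$ for $0\le m\le L$ belong to $T_n$. Since $T_n$ is a tree, the backbone segment from $(0,0)$ to $(L+1,0)$ is then a path in $T_n$. Every vertex on this segment has a unique path to the sink, and these paths all merge with the segment.

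\textbf{Step 1: reduction to a single tree path.} Consider the tree path $\gamma_0$ from $(0,0)$ to the sink and the tree path $\gamma_{L+1}$ from $(L+1,0)$ to the sink. If the segment lies in $T_n$, then at most one of its vertices can be the point where the union of these paths leaves the backbone. In particular, either $\gamma_0$ passes through $(L+1,0)$, or $\gamma_{L+1}$ passes through $(0,0)$. It therefore suffices to show two things:
\[
\Prob\big(\gamma_0 \text{ contains } (L+1,0)\big)\to 0 \quad\text{and}\quad \Prob\big(\gamma_{L+1} \text{ contains } (0,0)\big)\to 0 .
\]
(A slightly sharper alternative would be to look at the exit point $(k,0)$ of $\gamma_0$ from the backbone and ask that the segment on the side away from $k$ be missing an edge. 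The crude version above already suffices.)

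\textbf{Step 2: estimating the probabilities.} By Wilson's algorithm and Lemma~\ref{lem:direct_path}, $\gamma_0$ is the loop-erasure of a walk whose backbone projection is a simple random walk killed at rate $1/(n+1)$, or killed upon exiting $[-n,n]$. The loop-erased path contains $(L+1,0)$ only if the killed walk reaches horizontal distance at least $L+1$ before being killed, that is, $\max_{t\le E}|S_t|\ge n^{1/2+\delta}$. The proof of Lemma~\ref{lem:estimate-killed-at-sqrt} already controls this event. With high probability $E\le Cn$, and by Doob's maximal inequality (or a reflection/large-deviation bound)
\[
\Prob\Big(\max_{t\le Cn}|S_t|\ge n^{1/2+\delta}\Big)\le 2e^{-n^{2\delta}/(2C)}\to 0 .
\]
Conditioning on staying in $[-n,n]$ only costs a factor tending to $1$, again by Lemma~\ref{lem:estimate-killed-at-sqrt}'s argument. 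The same bound applies verbatim to $\gamma_{L+1}$, since the starting point $(L+1,0)$ is at distance of order $n$ from the backbone boundary, so the killed-walk representation is unchanged up to a shift. Letting $C\to\infty$ after $n\to\infty$ gives the claim.

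\textbf{Main obstacle.} The delicate point is Step 1: justifying rigorously that the full segment lying in $T_n$ forces one of the two tree paths to traverse the whole segment. This must not be confused with the weaker statement that the loop-erasure merely ends at a far tooth. I would argue it via uniqueness of paths in the tree. Let $(k,0)$ be the vertex where the path from the segment leaves the backbone through a tooth, or exits at an endpoint of the box. If $k\le L/2$, then $\gamma_{L+1}$ must travel along the backbone through $(\lfloor L/2\rfloor,0)$, a distance of at least $L/2$. If $k>L/2$, then $\gamma_0$ does. Replacing $L+1$ by $L/2$ in the estimate of Step 2 changes nothing. Loop-erasure only removes vertices, so horizontal displacement of the loop-erased path is bounded by the maximal displacement of the walk, which closes the argument.
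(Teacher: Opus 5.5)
Your argument is correct, but it reaches the key estimate by a slightly different route than the paper. The paper fixes $v=(\lfloor n^{1/2+\delta}\rfloor,0)$ and notes that if no backbone edge between $o$ and $v$ is missing, then $o$ and $v$ are joined in $T_n$ without passing through the sink. It then uses Wilson's algorithm (the walk from $v$ must hit the loop-erased path from $o$ before the sink) to bound this by the probability that two random walks from $o$ and $v$ meet before exiting the box. Finally it invokes Lemma~\ref{lem:estimate-killed-at-sqrt} to say this probability is negligible.

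You use only the marginal law of a single tree path ($\gamma_v$ is a loop-erased walk from $v$), plus a deterministic fact about rooted trees. If the whole segment lies in $T_n$, the common ancestor of $(0,0)$ and $(L+1,0)$ lies on the segment. Hence one of $\gamma_0,\gamma_{L+1}$ covers at least half of it, and a union bound finishes.

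Both routes reduce to the same fact: a comb walk killed after a $\mathrm{Geom}(1/(n+1))$ number of horizontal steps does not reach horizontal distance of order $n^{1/2+\delta}$. The paper's version is shorter, as a single event with no case split. Yours avoids the two-walk coupling and makes explicit two things the paper leaves implicit:
\begin{itemize}
\item the running-maximum bound (Lemma~\ref{lem:estimate-killed-at-sqrt} is, strictly speaking, about the endpoint $|S_E|$);
\item the halving argument. Two comb walks from $(0,0)$ and $(L,0)$ can only meet after one of them has travelled horizontal distance at least $L/2$.
\end{itemize}

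Two corrections to the write-up. First, the dichotomy in Step~1 (``either $\gamma_0$ passes through $(L+1,0)$ or $\gamma_{L+1}$ passes through $(0,0)$'') is false, so it cannot be ``justified rigorously'' as your last paragraph announces. The common ancestor can sit in the interior of the segment; for example, both paths can leave through the tooth at $(\lfloor L/2\rfloor,0)$. What is true, and what you actually prove at the end, is the halved statement, and Step~1 should simply be replaced by it. Second, no conditioning on staying in $[-n,n]$ is needed. Absorption at the ends of the backbone only stops the walk earlier, so the displacement event is dominated directly by $\{\max_{t\le E}|S_t|\ge L/2\}$ for the unconditioned walk.
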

\begin{proof}
    Let us choose the vertex $v=(\lfloor n^{1/2+\delta}\rfloor,0)$. Observe that if the unique path connecting $v$ and $o$ in the uniform spanning tree $T_n$ passes through the sink, then at least one edge along the backbone path between $o$ and $v$ must be missing in $T_n$. By Wilson's algorithm, the probability that $o$ is connected to $v$ in the uniform spanning tree is bounded above by the probability that two random walks, one started at $o$ and the other at $v$, intersect before exiting the box of side length $2n+1$ centered at $o$. By Lemma \ref{lem:estimate-killed-at-sqrt}, this probability goes to $0$ as $n\to\infty$. Thus, with probability tending to $1$, there must be a missing edge between $o$ and $v$, which proves the claim. 
\end{proof}

We now combine the preceding observations to conclude that, with high probability, there exists a vertex at horizontal distance at most $n^{1/2+\delta}$ that does not have maximal height.

\begin{proposition}\label{prop:not-full-height}
    For $n\in\mathbb{N}$, let $\eta_n$ be sampled from the stationary measure $\nu_n$ of the abelian sandpile Markov chain on the box of side length $2n+1$ centered at $o=(0,0)$ in the comb lattice $\mathcal{C}_2$. Fix $0<\delta<1/2$. Then it holds that
    $$\lim_{n\to\infty}\Prob\Big(\exists m\in\{0,...,n^{1/2+\delta}\}:\eta_n((m,0))<3\Big)=1.$$
    That is, there is a vertex of non-maximal height whose horizontal distance from $o=(0,0)$ is at most $n^{1/2+\delta}$.
\end{proposition}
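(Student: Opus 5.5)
Our plan is to combine Lemma~\ref{lem:missing-edge} with Lemma~\ref{lem:height-edge-missing}, in the quantitative form \eqref{eq:pb-missing-edge}, by decomposing according to the location of the first missing backbone edge to the right of the origin. Write $L=\lfloor n^{1/2+\delta}\rfloor$ and let $A_n$ be the event that some backbone edge $\{(m,0),(m+1,0)\}$ with $0\le m\le L$ is absent from $T(\eta_n)$. By the burning bijection, $T(\eta_n)$ is a wired uniform spanning tree on the box. Hence Lemma~\ref{lem:missing-edge} gives $\Prob(A_n)\to1$. On $A_n$, exactly one of the disjoint events $M_0,\dots,M_L$ occurs, where $M_m$ is the event that $\{(m,0),(m+1,0)\}$ is the first missing backbone edge to the right of $o$. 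Thus $A_n=\bigsqcup_{m=0}^{L}M_m$.

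We then bound the complement of the target event. Let $F_n=\{\eta_n((j,0))=3 \text{ for all } j\in\{0,\dots,L\}\}$. We write
\[
\Prob(F_n)\le \Prob(A_n^c)+\sum_{m=0}^{L}\Prob(F_n\mid M_m)\Prob(M_m)\le \Prob(A_n^c)+\max_{0\le m\le L}\Prob(F_n\mid M_m).
\]
The first term tends to $0$ by Lemma~\ref{lem:missing-edge}. For the second term, we use \eqref{eq:pb-missing-edge}, which says precisely that this maximum tends to $0$. So $\Prob(F_n)\to0$, and the complementary event $\{\exists m\le L:\eta_n((m,0))<3\}$ has probability tending to $1$.

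One boundary case needs checking: when $m=L$, Lemma~\ref{lem:height-edge-missing} may produce the non-full vertex $(L+1,0)$, which lies just outside $\{0,\dots,L\}$. We would handle this by running the whole argument with $L-1$ in place of $L$, i.e.\ applying Lemma~\ref{lem:missing-edge} with a slightly smaller $\delta'<\delta$. Since $n^{1/2+\delta'}+1\le n^{1/2+\delta}$ for large $n$, this costs nothing.

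The main obstacle is the uniformity in $m$ hidden in \eqref{eq:pb-missing-edge}. The local CLT bound $\max_k\Prob(X_{m+1}=k)\to0$ must hold uniformly over all $m\le n^{1/2+\delta}$. We would justify this by noting that $X_{m+1}$ is governed by a killed walk whose killing time is of order $n$, so its spread is of order $\sqrt n$ regardless of the starting point. The conditioning on $M_m$ (staying to the right of $m$ inside the box) only forces the walk not to cross one barrier, and this does not concentrate the walk's distribution on any single value.
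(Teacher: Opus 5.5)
Your proposal is correct and is essentially the paper's own proof. Like the paper, you split on the event that some backbone edge within distance $n^{1/2+\delta}$ is missing, control its complement by Lemma~\ref{lem:missing-edge}, and decompose the rest over the disjoint events $M_m$ using \eqref{eq:pb-missing-edge}. Your handling of the endpoint $m=\lfloor n^{1/2+\delta}\rfloor$, where the non-full vertex could be $(m+1,0)$ just outside the range, by passing to a slightly smaller $\delta'$ is a valid refinement that the paper passes over silently.
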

\begin{proof}
    Let $M$ denote the event that some horizontal edge at distance at most $n^{1/2+\delta}$ is absent from the spanning tree $T(\eta_n)$, and define the events $M_m$ as in Lemma \ref{lem:height-edge-missing}. Then it holds
    \begin{align*}
    \Prob\big(\forall m\in\{0,...,n^{1/2+\delta}\}:\eta_n((m,0))=3\big)&=\Prob\big(\forall m\in\{0,...,n^{1/2+\delta}\}:\eta_n((m,0))=3,M\big)\\&+\Prob\big(\forall m\in\{0,...,n^{1/2+\delta}\}:\eta_n((m,0))=3,M^c\big).
    \end{align*}
    By Lemma \ref{lem:missing-edge}, the second term on the right-hand side above goes to $0$ as $n\to\infty$, so it remains to consider the first term. We have
    \begin{align*}
        \Prob\big(\forall m\in\{0,...,n^{1/2+\delta}\}:& \eta_n((m,0))=3,  M\big)=\sum_{i=0}^m\Prob\big(\forall m\in\{0,...,n^{1/2+\delta}\}:\eta_n((m,0))=3,M_i\big)\\
        &=\sum_{i=0}^m\Prob(\forall m\in\{0,...,n^{1/2+\delta}\}:\eta_n((m,0))=3\;\vert\;M_i)\Prob(M_i).
    \end{align*}
    Using \eqref{eq:pb-missing-edge}, we get the claim. 
\end{proof}
We can now prove the upper bound in Theorem~\ref{thm:main}~\ref{thm:mainc}.
\begin{proof}[Proof of the upper bound in Theorem~\ref{thm:main}~\ref{thm:mainc}]
By Proposition \ref{prop:not-full-height}, with probability tending to $1$, there exists a vertex to the right of $o=(0,0)$, at distance at most $n^{1/2+\delta}$, whose height is not maximal $3$. By symmetry, the same statement holds for a vertex to the left of the origin. Hence, the probability that such non-maximal height vertices exist on both sides of the origin, each within distance at most $n^{1/2+\delta}$, also tends to $1$ as $n\to\infty$, since this event is the intersection of the two preceding events. The desired upper bound then follows from Lemma \ref{lem:not-full-height-aval}.
\end{proof}

\begin{figure}
    \centering
    \begin{minipage}{0.32\textwidth}
        \centering
        \includegraphics[height=8.6cm]{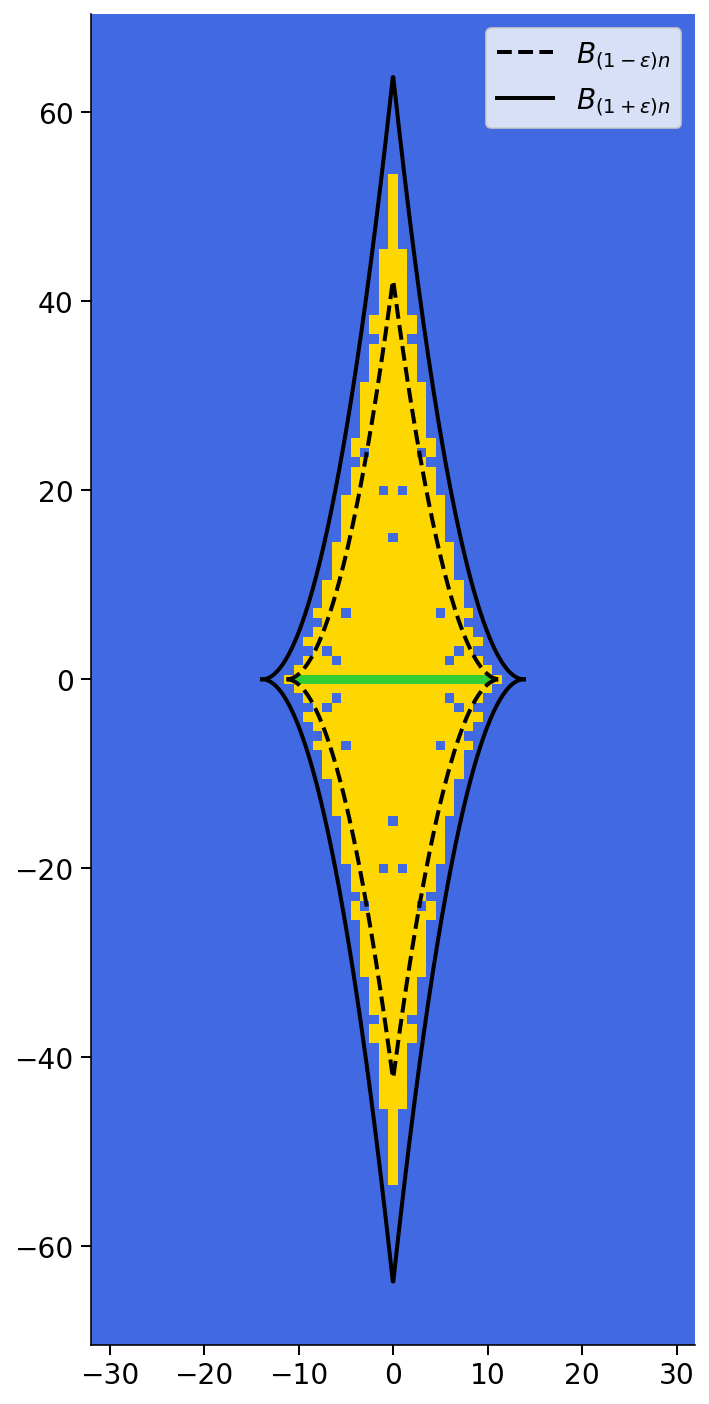}
    \end{minipage}
    \hfill
    \begin{minipage}{0.32\textwidth}
        \centering
        \includegraphics[height=8.6cm]{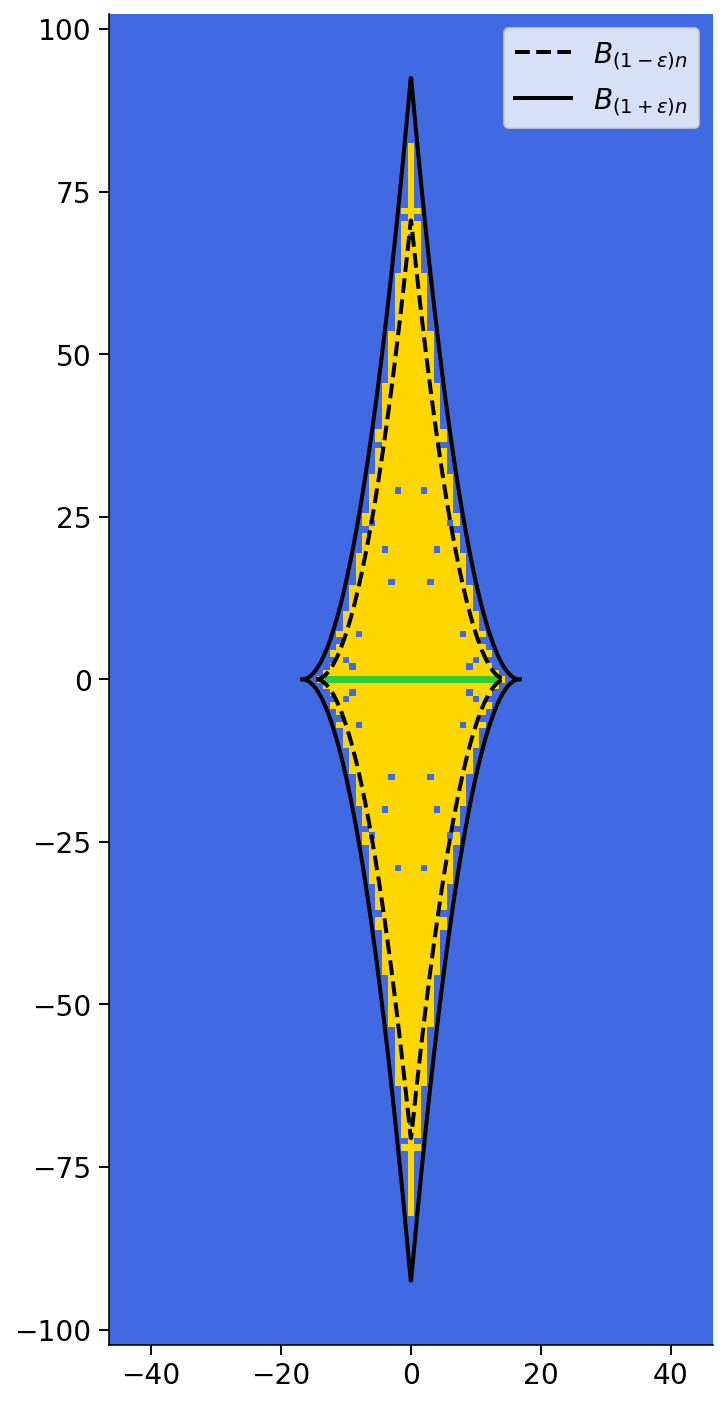}
    \end{minipage}
    \hfill
    \begin{minipage}{0.32\textwidth}
        \centering
        \includegraphics[height=8.7cm]{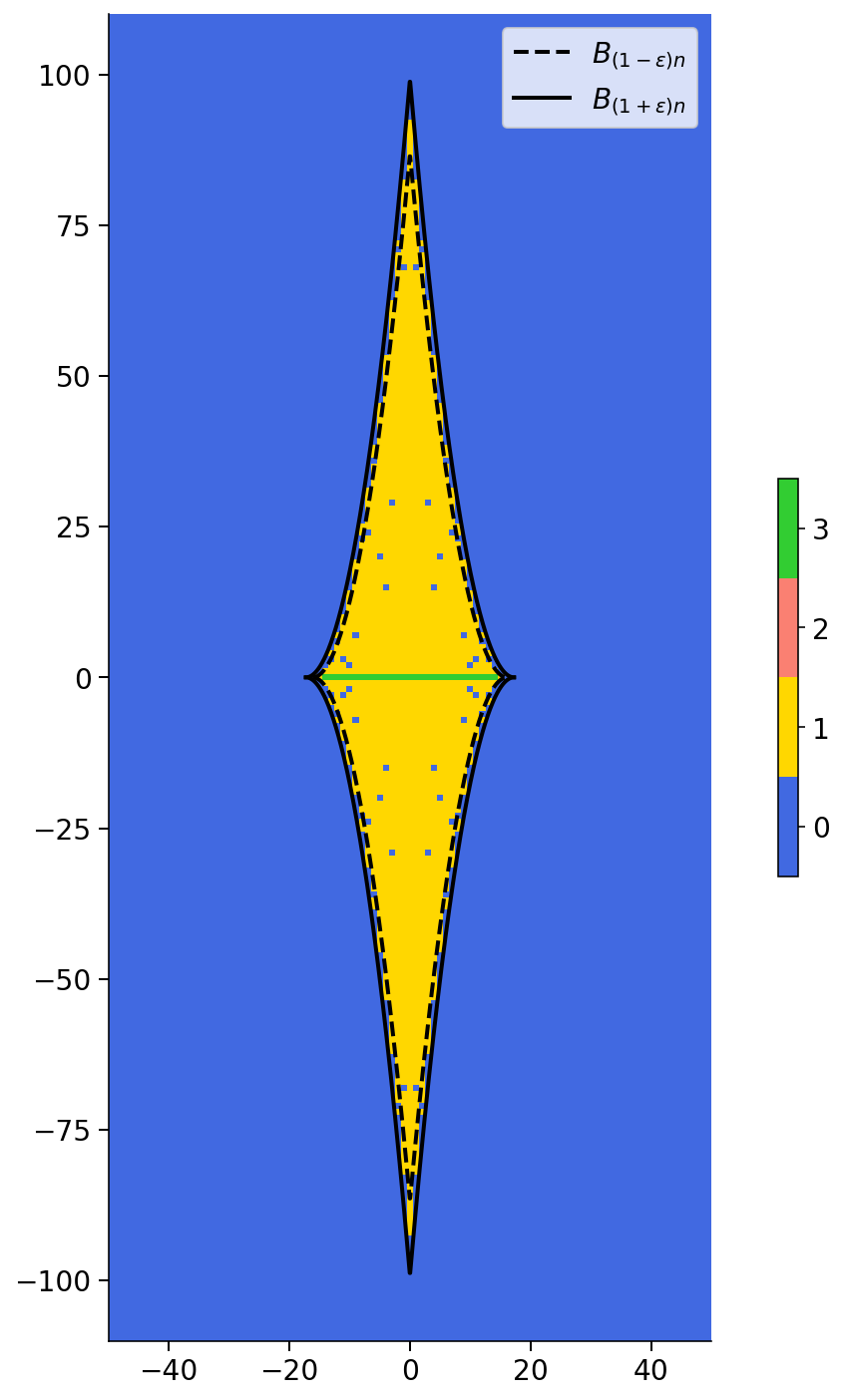}
    \end{minipage}
    \caption{The limit shape of the abelian sandpile on the comb, obtained by placing $n$ particles at $o=(0,0)$ and stabilizing. The dashed lines indicate the boundary of $\mathcal{B}_{(1-\varepsilon)n}$, while the solid lines indicate the boundary of $\mathcal{B}_{(1+\varepsilon)n}$. The parameter values are $n=909$ and $\varepsilon=0.3$ on the left, $n=1721$ and $\varepsilon=0.2$ in the middle, and $n=2073$ and $\varepsilon=0.1$ on the right. Vertex colors correspond to the number of particles present at each site, as indicated by the colorbar on the right. }
    \label{fig:limit_shape_with_ball}
\end{figure}

\section{Single-source limit shape}\label{sec:single-source}
In this section we prove Theorem \ref{thm:main2} on the limit shape of the single source abelian sandpile model on the comb lattice $\mathcal{C}_2$. In Figure \ref{fig:limit_shape_with_ball}, we display the limit shapes for different values of $n$, together with the boundaries of the sets $\mathcal{B}_{(1-\varepsilon)n}$ and $\mathcal{B}_{(1+\varepsilon)n}$. The figure displays the set of vertices that receive at least one particle during stabilization, which differs from the limit shape of toppled vertices $\mathcal{S}_n$ by the outermost layer.

\subsection{Inner bound}

Consider the divisible sandpile on $\mathcal{C}_2$, with capacity $3$ assigned to backbone vertices and capacity $1$ assigned to tooth vertices. If we show that this divisible sandpile has limit shape $\mathcal{B}_m$ as in \eqref{eq:Bm}, then the least action principle yields an inner bound for the abelian sandpile limit shape, because the divisible sandpile odometer is the minimal non-negative function that stabilizes $n$ particles. In fact, for our purposes it is enough to construct a subsolution to the divisible sandpile odometer.

Let $n\in\N$ be the total mass placed at the origin $o=(0,0)$ of $\mathcal{C}_2$ and define for $x\in\Z$
$$R_n=\Big\lfloor\Big(\frac{9n}{4}\Big)^{1/3}\Big\rfloor,\hspace{1cm}z_x=\max(0,R_n-|x|),\hspace{1cm}
K_x^{(n)}=\max\Big(0,\frac{1}{3}z_x^2-2\Big).$$
For the divisible sandpile with the above capacities (3 on the backbone vertices  and 1 on the teeth vertices), define a subsolution of the odometer function via
\begin{align}\label{eq:lower-bound-function}L^{(n)}(x,y)=\frac{1}{2}\max(0,K_x^{(n)}-|y|)^2+\frac{1}{2}\max(0,K_x^{(n)}-|y|).\end{align}
By definition it holds that $L^{(n)}\geq 0$. In order to show that this is indeed a valid subsolution for the divisible sandpile odometer, we need to show the following three properties of $\Delta L^{(n)}$.
\begin{enumerate}[label=(\alph*)]
    \item\label{eq:lower-bound-a} For all $(x,0)\in\mathcal{C}_2$ with $0<|x|< R_n-2$ it holds $-\Delta L^{(n)}(x,0)\geq 3$.
    \item\label{eq:lower-bound-b} For all $(x,y)\in\mathcal{C}_2$ with $|x|\leq R_n-2$ and $1\leq |y|\leq K_x^{(n)}$  it holds $-\Delta L^{(n)}(x,y)\geq 1$.
    \item\label{eq:lower-bound-c} $-\Delta L^{(n)}(o)\geq 3-n$.
\end{enumerate}
We verify these three properties one by one.

\ref{eq:lower-bound-a} For $x\neq 0$ with $z_x>3$ it holds
$$-\Delta L^{(n)}(x,0) =\frac{1}{2}\Big(\big(K_{x-1}^{(n)}\big)^2+\big(K_{x+1}^{(n)}\big)^2-2\big(K_{x}^{(n)}\big)^2 +K_{x-1}^{(n)}+K_{x+1}^{(n)}-2K_{x}^{(n)}\Big)+2\big(L^{(n)}(x,1)-L^{(n)}(x,0)\big).$$
We analyze the terms on the right-hand side separately. Since $\big(K_x^{(n)}\big)^2=\frac{1}{9}z_x^4-\frac{4}{3}z_x^2+4$, we get
$$(K_{x-1}^{(n)})^2+(K_{x+1}^{(n)})^2-2(K_{x}^{(n)})^2=\frac{12}{9}z_x^2+\frac{2}{9}-\frac{8}{3}=\frac{12}{9}z_x^2-\frac{22}{9}.$$
Similarly we obtain $K_{x-1}^{(n)}+K_{x+1}^{(n)}-2K_{x}^{(n)}=\frac{2}{3}$. Moreover
$$2L^{(n)}(x,1)-2L^{(n)}(x,0)=(K^{(n)}_x-1)^2+(K_x^{(n)}-1)-(K_x^{(n)})^2-K_x^{(n)}=-2K_x^{(n)},$$
which yields
$$-\Delta L^{(n)}(x,0)=\Big(\frac{12}{18}z_x^2-\frac{8}{9}\Big)-\Big(\frac{2}{3}z_x^2-4\Big)=\frac{28}{9}>3.$$
We still need to consider the Laplacian at the horizontal boundary. For $z_x=3$, we have $K_x^{(n)}=1$ and inserting this into the Laplacian, yields
$-\Delta L^{(n)}(x,0)=\frac{100}{18}+\frac{30}{18}-2-2=\frac{29}{9}>3.$ 

\ref{eq:lower-bound-b} Let us first consider $|y|\leq K_x^{(n)}-1$, then $L$ along the tooth is a a quadratic polynomial, whose discrete Laplacian evaluates to $1$. On the boundary, where $|y|+1>K_x^{(n)}$ we have
$$-\Delta L^{(n)}(x,y)=\frac{1}{2}\left((K_x^{(n)}-|y|+1)^2-2(K_x^{(n)}-|y|)^2\right)+\frac{1}{2}\left(|y|+1-K_x^{(n)}\right).$$
Since
$$(K_x^{(n)}-|y|+1)^2-2(K_x^{(n)}-|y|)^2=1+2(K_x^{(n)}-|y|)-(K_x^{(n)}-|y|)^2,$$
we get
$$-\Delta L^{(n)}(x,y)=\frac{1}{2}\left(-(K_x^{(n)}-|y|)^2+2+(K_x^{(n)}-|y|)\right)=1+\frac{1}{2}\left(K_x^{(n)}-|y|\right)\left(1-(K_x^{(n)}-|y|)\right),$$
 and since $K_x^{(n)}-|y|\leq 1$ we obtain $-\Delta L^{(n)}(x,y)\geq 1.$

 \ref{eq:lower-bound-c} 
By explicitly evaluating the function at $(0,1)$, $(0,-1)$, $(1,0)$, and $(-1,0)$, and then substituting these values into the Laplacian, we obtain
 $$-\Delta L^{(n)}(o)=-\frac{4}{9}R_n^3+\frac{14}{9}R_n+\frac{28}{9},$$
which together with 
 $R_n^{3}\leq \frac{9n}{4}$ yields  $-\Delta L^{(n)}(o)\geq 3-n$.

Using the function $L^{(n)}$, we can now establish a lower bound for the divisible sandpile limit shape where the capacity for the backbone vertices is 3 and for all the other vertices is 1.

\begin{lemma}\label{lem:lower-bound-divisible}
Consider the divisible sandpile on $\mathcal{C}_2$, where backbone vertices have capacity $3$ and tooth vertices have capacity $1$. Let $n \in \mathbb{N}$, and let $w_n$ denote the divisible sandpile odometer obtained by placing an initial mass of $n$ at $o=(0,0)$ and then stabilizing the configuration. Then it holds 
\[
\mathcal{L}_n:=\Big\{(x,y)\in\mathcal{C}_2\;:\; |x|< R_n-2\text{ and }|y|< \frac{1}{3}\big(R_n-|x|\big)^2-2\Big\}\subseteq \text{supp}(w_n)\,.
\]
\end{lemma}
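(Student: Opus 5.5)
The plan is to compare $w_n$ with the subsolution $L^{(n)}$ from \eqref{eq:lower-bound-function} via a maximum principle, and then read off the support from the explicit positivity set of $L^{(n)}$.

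\textbf{Step 1: the odometer equation.} By construction of the divisible sandpile, the limiting odometer satisfies $n\delta_o-\Delta w_n\le h$ everywhere. It satisfies $n\delta_o-\Delta w_n=h$ on $\operatorname{supp}(w_n)$, where $h$ is the capacity function (here with the normalization of the toppling operator in which $\Delta w_n$ counts mass emitted). Off the support, $w_n=0$. The least action principle says $w_n$ is the minimal nonnegative function with $n\delta_o-\Delta w_n\le h$. So to get $L^{(n)}\le w_n$ we cannot use minimality directly, since $L^{(n)}$ is a subsolution rather than a supersolution. Instead I use a comparison argument for $f:=L^{(n)}-w_n$.

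\textbf{Step 2: comparison.} Let $A=\{L^{(n)}>0\}$, a finite set. On $A\cap\{x\neq o\}$, properties (a) and (b) give $-\Delta L^{(n)}\ge h$. Strictly, these cover $|x|<R_n-2$ and $1\le|y|\le K^{(n)}_x$; points of $A$ with $|x|\ge R_n-2$ have $K_x^{(n)}\le 1/3\cdot 4-2<0$, so they are not in $A$. At $o$, property (c) gives $-\Delta L^{(n)}(o)\ge 3-n$. Meanwhile $-\Delta w_n\le h-n\delta_o$ everywhere. Hence $-\Delta f\ge 0$ on $A$, i.e.\ $f$ is superharmonic there, in the sense $\Delta f\le 0$ meaning $f(v)\ge$ average of its neighbours. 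Suppose $\max f>0$. It is attained at some $v$, which lies in $A$ since $f\le 0$ off $A$. Superharmonicity gives $f(v)\ge$ the neighbour average, so all neighbours of $v$ attain the maximum. Propagating through the connected set reaches a point outside $A$, where $f\le 0$, a contradiction. Hence $L^{(n)}\le w_n$, so $A\subseteq\operatorname{supp}(w_n)$.

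I must be careful with the sign convention: the paper's $\Delta$ is the graph Laplacian with $\deg$ on the diagonal, so "$-\Delta f\ge 0$" means $\deg(v) f(v)\le\sum_{u\sim v} f(u)$, i.e.\ subharmonic. The maximum principle is therefore exactly as used above. I will fix the conventions so that (a)–(c) read as $\Delta L^{(n)}\le -h$ in the paper's toppling normalization, and check consistency with the computation in (a).

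\textbf{Step 3: identify $A$.} $L^{(n)}(x,y)>0$ iff $K^{(n)}_x>|y|$. For $|x|<R_n-2$, $z_x=R_n-|x|>2$, so $K_x^{(n)}=\frac13(R_n-|x|)^2-2$. Thus $\mathcal L_n=A$, which gives the claim. Points with $|y|=0$ are in $A$ since $K_x^{(n)}>0$ there.

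\textbf{Main obstacle.} The main obstacle is getting the Laplacian sign/normalization and the boundary vertices of $A$ right. I expect this to be the delicate point: the inequality $-\Delta L^{(n)}\ge h$ must hold at every point of $A$, including backbone points where $z_x=3$ and tooth points adjacent to the boundary. The computations (a)–(c) were designed for exactly this, so I will simply invoke them.
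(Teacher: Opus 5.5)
Your proposal is correct and is essentially the paper's proof. Properties (a)--(c) of $L^{(n)}$ together with $n\delta_o-\Delta w_n\le h$ give a maximum principle for $L^{(n)}-w_n$ on the finite set $\{L^{(n)}>0\}=\mathcal{L}_n$, outside of which it is $\le 0$; this is exactly the paper's minimum-principle argument for $w_n-L^{(n)}$. Please tidy up Step~2: with the paper's convention, $-\Delta f\ge 0$ means $\deg(v)f(v)\le\sum_{u\sim v}f(u)$ (subharmonic), and it is this inequality, not ``$f(v)\ge$ the neighbour average'', that forces every neighbour of a maximiser to attain the maximum.
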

\begin{proof}
    For $L^{(n)}$ defined in \eqref{eq:lower-bound-function} it holds
    $$-\Delta (w_n-L^{(n)})\leq 0 \quad \text{on } \mathcal{L}_n$$
    and on the external boundary $\partial \mathcal{L}_n=\big\{z\notin \mathcal{L}_n\; : \;\exists z'\in\mathcal{L}_n:z\sim z'\big\}$  of $\mathcal{L}_n$ we have
    $$(w_n-L^{(n)})(x,y)=w_n(x,y)\geq 0.$$
    It thus follows that $w_n-L^{(n)}$ must be non-negative everywhere in $\mathcal{L}_n$ by the minimum principle, from which our claim follows.
\end{proof}

With this lemma established, we can now prove the inner bound for the abelian sandpile limit shape.

\begin{proposition}\label{prop:inner-bound}
 For $n\in\mathbb{N}$,  let $\mathcal{S}_n $ denote the set of vertices in $\mathcal{C}_2$ that topple during the stabilization of the sandpile configuration $n\delta_o$  of $n$ particles placed at $o=(0,0)$. Then, for every \(\varepsilon>0\), there exists \(N(\varepsilon)\in\mathbb{N}\) such that, for all \(n\ge N(\varepsilon)\), it holds
   $ \mathcal{B}_{(1-\varepsilon)n}\subseteq \mathcal{S}_n$.
\end{proposition}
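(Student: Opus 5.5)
The plan is to compare the abelian sandpile odometer with the divisible sandpile odometer $w_n$ of Lemma~\ref{lem:lower-bound-divisible}, and then to check that $\mathcal{B}_{(1-\varepsilon)n}$ lies inside $\mathcal{L}_n$ for large $n$.

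\textbf{Step 1: domination of odometers.} Let $u_n:\mathcal{C}_2\to\mathbb{N}_0$ be the abelian sandpile odometer, i.e.\ the number of topplings at each vertex during the stabilization of $n\delta_o$. Stability of the final configuration gives
\[
n\delta_o-\Delta u_n\le \deg-1 .
\]
The right-hand side is $3-1=2\le 3$ on the backbone and $2-1=1$ on the teeth. Hence $u_n$ is a non-negative function satisfying $n\delta_o-\Delta u_n\le h$, where $h$ is the capacity function equal to $3$ on the backbone and $1$ on the teeth. This is exactly why the backbone capacity was chosen to be $3$.

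The Laplacian conventions need care. In the divisible model, toppling $x$ by an amount $t$ removes $t\deg(x)$ from $x$ and sends $t$ to each neighbour. The odometer $u_k$ in the preliminaries is normalized by $1/\deg$, so the final configuration is $\mu_0-\Delta u_\infty$ with the unnormalized graph Laplacian. This matches the abelian relation, in which one toppling corresponds to $u=1$.

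\textbf{Step 2: comparison and support.} The least action principle says $w_n$ is the pointwise minimal non-negative $w$ with $n\delta_o-\Delta w\le h$. Therefore $w_n\le u_n$ pointwise, and
\[
\operatorname{supp}(w_n)\subseteq\{u_n>0\}=\mathcal{S}_n .
\]
Combining this with Lemma~\ref{lem:lower-bound-divisible} yields $\mathcal{L}_n\subseteq\mathcal{S}_n$.

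A minor point is that the least action principle is usually stated for finitely supported limits. Finiteness of $u_n$ on $\mathcal{C}_2$ follows because the abelian stabilization of finitely many particles on the comb terminates. I would justify this by mass conservation: every vertex that topples keeps at least $\deg-1$ particles in the final configuration, so only finitely many vertices topple. Alternatively, one can apply the minimality argument directly on a large finite box.

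\textbf{Step 3: geometric inclusion.} Let $(x,y)\in\mathcal{B}_{(1-\varepsilon)n}$. Write $R'=\bigl(\tfrac{9(1-\varepsilon)n}{4}\bigr)^{1/3}=(1-\varepsilon)^{1/3}R$, where $R=(9n/4)^{1/3}$ and $R_n=\lfloor R\rfloor\ge R-1$. Then $|x|\le R'$, so
\[
R_n-2-|x|\ \ge\ \bigl(1-(1-\varepsilon)^{1/3}\bigr)R-3,
\]
which is positive for large $n$. For the vertical direction,
\[
|y|\le \tfrac13 (R'-|x|)^2 ,
\]
and we need $\tfrac13(R'-|x|)^2<\tfrac13(R_n-|x|)^2-2$. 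Since $R_n-|x|\ge (R'-|x|)+c_\varepsilon R-1$ with $c_\varepsilon=1-(1-\varepsilon)^{1/3}>0$, the difference of squares is at least
\[
(c_\varepsilon R-1)^2 ,
\]
which exceeds $6$ once $n\ge N(\varepsilon)$. Hence $\mathcal{B}_{(1-\varepsilon)n}\subseteq\mathcal{L}_n\subseteq\mathcal{S}_n$.

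\textbf{Main obstacle.} The only delicate point is Step 1–2: making sure the normalizations of the divisible odometer and the abelian toppling count agree, so that $u_n$ is admissible in the least action principle. The geometric estimate in Step 3 is routine.
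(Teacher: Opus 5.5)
Your proposal is correct and is essentially the paper's proof. Least action gives $w_n\le u_n$ because $n\delta_o-\Delta u_n\le \deg-1\le h$; Lemma~\ref{lem:lower-bound-divisible} then yields $\mathcal{L}_n\subseteq\mathcal{S}_n$; and your bound $(R_n-|x|)^2-(R'-|x|)^2\ge (c_\varepsilon R-1)^2>6$ amounts to the paper's choice of $N(\varepsilon)$ with $\bigl((1-\varepsilon)\tfrac{9n}{4}\bigr)^{1/3}\le R_n-3$.

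Two side remarks are wrong, though neither breaks the argument. First, backbone vertices have degree $4$, so $\deg-1=3=h$ there (equality, not $2\le 3$), and that is the real reason for capacity $3$. Second, a toppled vertex need not keep $\deg-1$ particles: for $n=4$ the origin topples once and ends empty. Finiteness of the stabilization, which the paper takes for granted, should therefore come from your finite-box alternative, or from the abelian least action principle applied to the finitely supported $v_{R(n)}$ of Lemma~\ref{lem:outer-bound-stable}.
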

\begin{proof}
Stabilize the sandpile $n\delta_o$ using the abelian sandpile toppling procedure and the divisible sandpile toppling procedure, where backbone vertices are assigned capacity $3$ and tooth vertices capacity $1$. Denote the corresponding odometer functions by $v_n$ for the abelian sandpile and $w_n$ for the divisible sandpile. By the least action principle, we have $w_n\leq v_n$. Lemma \ref{lem:lower-bound-divisible}  together with $\mathcal{S}_n=\{(x,y)\in\mathcal{C}_2:\ v_n(x,y)>0\}$ implies that
    $$\mathcal{L}_n:=\Big\{(x,y)\in\mathcal{C}_2\;:\; |x|< R_n-2\text{ and }|y|< \frac{1}{3}(R_n-|x|)^2-2\Big\}\subseteq\mathcal{S}_n.$$
    Choosing $N(\varepsilon)\in\N$ such that for all $n\geq N(\varepsilon)$ it holds $\left((1-\varepsilon)\frac{9n}{4}\right)^{1/3}\leq R_n-3$, then for $x\in\Z$ with $|x|\leq \left((1-\varepsilon)\frac{9n}{4}\right)^{1/3}$ we have
    \[
    \frac{1}{3}\Big(\Big((1-\varepsilon)\frac{9n}{4}\Big)^{1/3}-|x|\Big)^2\leq \frac{1}{3}(R_n-|x|)^2-2
    \]
    which implies $\mathcal{B}_{(1-\varepsilon)n}\subseteq \mathcal{L}_n\subseteq \mathcal{S}_n.$ 
\end{proof}

\subsection{Outer bound}

In this subsection, we construct an integer valued function that will serve as an upper bound for the odometer function of the abelian sandpile on $\mathcal{C}_2$. To motivate the construction, we first examine the final stable configuration obtained by stabilizing an initial point mass, as illustrated in Figure \ref{fig:limit_shape_with_ball}. For a particular sequence of initial masses, this final stable configuration has a well-controlled structure: it is identically $3$ along the backbone, while on each tooth it is identically $1$ except for exactly one hole. This suggests constructing an appropriate upper bound by finding a function whose Laplacian is constant equal to $3$ on the backbone and constant equal to $1$ on the teeth, and then perturbing it by a wedge function along the teeth in order to prescribe the locations of the holes. To simplify the construction, rather than beginning with a specific point mass and then building a corresponding upper bound, we instead fix the horizontal spread of the final limit shape along the $x$-axis and construct a function with the desired Laplacian values and support.  A similar approach has been used for the limit shape of the rotor-router aggregation on the comb lattice in \cite{Huss-Sava-2012-RRComb}.
To this end, fix some $R \in \mathbb{N}$. Let $(f_R(x))_{x \in \mathbb{N}_0}$ be defined as the solution to the following recursion. Set $f_R(R)=1$ and $f_R(x)=0$ for all $x>R$. For all remaining values of $x$, define
\begin{align}\label{eq:odometer-backbone}\tag{Rec-BB}
    f_R(x)   = 2f_R(x+1)-f_R(x+2) +2\Big\lfloor \frac{1+\sqrt{1+8f_R(x+1)}}{2}\Big\rfloor +1.
\end{align}
This function is uniquely determined, since the recursion can be solved backwards from $x=R$ using the prescribed boundary values. 
The solution to \eqref{eq:odometer-backbone} will ultimately provide the backbone values of our upper bound. The recursion is chosen precisely so that the Laplacian at backbone vertices is constantly equal to $3$, as will be shown in the proof of Lemma \ref{lem:outer-bound-stable}.  
The term $\Big\lfloor \frac{1+\sqrt{1+8f_R(x+1)}}{2}\Big\rfloor$
in \eqref{eq:odometer-backbone} determines the tooth heights in the support of our upper bound. To explain the intuition behind this expression, recall that for the divisible sandpile odometer  the tooth height is asymptotically  the square root of the odometer value along the backbone. This leads us to impose an analogous relation in our construction. However, because the tooth heights must be integer-valued, the choice requires some care.  We therefore choose the tooth heights $H_x$ so that the value of $f_R$ lies between two consecutive triangular numbers:
\[
\frac{1}{2}(H_x-1)H_x\leq f_R(|x|)<\frac{1}{2}(H_x+1)H_x.
\]
As the next lemma shows, this condition uniquely determines $H_x$ for every $x\in\mathbb{Z}$.

\begin{lemma}\label{lem:tooth-height-expression}
Let $m\in\mathbb{N}$, and let $g(m)\in\mathbb{N}$ be the unique integer satisfying
\[
\frac{1}{2}(g(m)-1)g(m)\leq m < \frac{1}{2}(g(m)+1)g(m).
\]
Then $g(m)$ is explicitly given by
\[
g(m)=\Big\lfloor \frac{1+\sqrt{1+8m}}{2}\Big\rfloor .
\]
Equivalently, the above inequality uniquely determines $g(m)$ through this formula.
\end{lemma}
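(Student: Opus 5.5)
The plan is to treat existence/uniqueness via monotonicity of the triangular numbers $t(g):=\frac{1}{2}(g-1)g$, and then to solve the defining inequality explicitly as a quadratic inequality in $g$.

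\textbf{Step 1: existence and uniqueness.} The sequence $t(g)$, $g\in\mathbb{N}$, is strictly increasing for $g\ge 1$, since $t(g+1)-t(g)=g>0$. Its first values are $t(1)=0\le m$, and $t(g)\to\infty$. Hence the half-open intervals $[t(g),t(g+1))$ for $g\ge1$ partition $\mathbb{N}_0$. Since $\frac12(g+1)g=t(g+1)$, the condition in the lemma says exactly that $m\in[t(g),t(g+1))$. So there is exactly one such $g=g(m)$.

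\textbf{Step 2: the explicit formula.} I will rewrite the two inequalities. For $g\ge1$, the condition $t(g)\le m$ is equivalent to $g^2-g-2m\le 0$. This holds iff $g\le \alpha:=\frac{1+\sqrt{1+8m}}{2}$, the positive root, because the other root $\frac{1-\sqrt{1+8m}}{2}\le 0<g$. Likewise, $m<t(g+1)$ is equivalent to $(g+1)^2-(g+1)-2m>0$. Since $g+1>0$ exceeds the negative root, this holds iff $g+1>\alpha$. Together these give $\alpha-1<g\le\alpha$ with $g$ an integer, which is precisely $g=\lfloor\alpha\rfloor$. This argument also yields uniqueness directly, making Step 1 a consistency check.

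\textbf{Main obstacle.} There is essentially none. The only care needed is the sign and strictness bookkeeping: discarding the negative root, and making sure the strict upper inequality $m<t(g+1)$ produces the strict $g>\alpha-1$, which matches the floor. At the boundary case $m=t(g)$ we get $\alpha=g$ exactly, because $1+8m=(2g-1)^2$. The floor then correctly returns $g$ and not $g-1$, consistent with the non-strict lower inequality.
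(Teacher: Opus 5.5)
Your proof is correct and follows essentially the same route as the paper: both reduce the two triangular-number inequalities to $\alpha-1<g\le\alpha$ with $\alpha=\frac{1+\sqrt{1+8m}}{2}$ and then take the floor; the paper gets there by completing the square to obtain $(2g-1)^2\le 8m+1<(2g+1)^2$, while you locate the roots of the two quadratics. Your Step~1 also proves the existence and uniqueness of $g(m)$, which the paper's statement simply assumes.
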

\begin{proof}
    The first inequality $\frac{1}{2}(g(m)-1)g(m)\leq m$ implies $g(m)^2-g(m)\leq 2m$ which is equivalent to $\big(g(m)-\frac{1}{2}\big)^2\leq 2m+\frac{1}{4}$, which in turn gives $(2g(m)-1)^2\leq 8m+1$. Taking the square-root and rearranging yields $g(m)\leq \frac{1+\sqrt{1+8m}}{2}$. To obtain a corresponding lower bound on $g(m)$, we consider the second inequality $m < \frac{1}{2}(g(m)+1)g(m)$, which after rearranging is equivalent to $2m<g(m)^2+g(m).$ Completing the square gives $2m+\frac{1}{4}<\big(g(m)+\frac{1}{2}\big)^2$, thus $8m+1<(2g(m)+1)^2$. Rearranging implies $\sqrt{1+8m}<2g(m)+1$, which finally shows that $ \frac{1+\sqrt{1+8m}}{2}<g(m)+1$.
    Since $g(m)$ must be integer-valued, the upper and lower bound imply that $g(m)=\left\lfloor\frac{1+\sqrt{1+8m}}{2}\right\rfloor$.
\end{proof}
Thus, given the solution to \eqref{eq:odometer-backbone}, we can choose the corresponding tooth height for $x\in\Z$ as
\begin{align}\label{eq:tooth-heigh}\tag{TH}
    H_x=\Big\lfloor \frac{1+\sqrt{1+8f_R(|x|)}}{2}\Big\rfloor.
\end{align}
The final step in constructing the upper bound is to specify the locations of the holes on the individual teeth. First observe that, if the upper bound were required to have constant Laplacian along each tooth, a natural choice would be $\frac{1}{2}(H_x-|y|)(H_x-|y|+1)$.
Since we want exactly one hole on each tooth, we perturb this profile by a function of the form
$\max(0,y_x^{\mathrm{hole}}-|y|)$, whose Laplacian is identically zero except at the prescribed hole location $y_x^{\mathrm{hole}}$.  
To ensure that the resulting upper bound is well-defined, the hole location must be chosen so that, at $y=0$, the tooth profile agrees with the prescribed backbone value $f_R$. This forces the choice
\begin{align}\label{eq:hole}\tag{Hole}
y_x^{\text{hole}}=\frac{1}{2}(H_x+1)H_x-f_R(|x|),
\end{align}
for every $x\in\mathbb{Z}$. Moreover, by the definition of the tooth height $H_x$ in \eqref{eq:tooth-heigh}, we have $1\leq y_x^{\mathrm{hole}}\leq H_x$.
With these quantities now fixed, we are ready to define the upper bound.

\begin{lemma}\label{lem:outer-bound-stable}
Let $R\in\N$, and let $f_R$ be the solution of \eqref{eq:odometer-backbone} with boundary conditions $f_R(R)=1$ and $f_R(x)=0$ for all $x>R$. Fix the tooth heights $(H_x)_{x\in\Z}$ as in \eqref{eq:tooth-heigh} as well as the hole locations along the teeth $(y_x^{\text{hole}})_{x\in\Z}$ as in \eqref{eq:hole}. Define the function $v_R:\mathcal{C}_2\to\N_0$ as
\begin{align*}
    v_R(x,y)=\begin{cases}
        \frac{1}{2}(H_x-|y|)(H_x-|y|+1) -\max\bigl(0,y_x^{\mathrm{hole}}-|y|\bigr),&0\leq |y| \leq H_x\\
        0,&\text{else}
    \end{cases}.
\end{align*}
Then $v_R$ is integer-valued, and for   $(x,y)\in\mathcal{C}_2\setminus\{o\}$, it holds $-\Delta v_R(x,y)\leq \deg_{\mathcal{C}_2}(x,y)-1$.
\end{lemma}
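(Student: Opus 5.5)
The plan is to verify the inequality by a direct computation of $-\Delta v_R$, using the sign convention of the paper: $-\Delta v(z)=\sum_{w\sim z}v(w)-\deg(z)v(z)$. I would treat tooth vertices and backbone vertices separately, after two preliminary observations.

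\emph{Preliminaries.} First, $f_R$ is integer-valued by induction backwards from $x=R$, since \eqref{eq:odometer-backbone} only involves integers. Hence $H_x$ and $y_x^{\mathrm{hole}}$ are integers, and so is $v_R$, because $\frac12 t(t+1)\in\N_0$ for $t\in\N_0$. Second, the definition is consistent on the backbone. By \eqref{eq:hole} and $y_x^{\mathrm{hole}}\ge 1$,
\[
v_R(x,0)=\tfrac12 H_x(H_x+1)-y_x^{\mathrm{hole}}=f_R(|x|).
\]
So both teeth at $x$ meet the backbone in the value $f_R(|x|)$. I will write $q(t)=\frac12 t(t+1)$ and $w_h(y)=\max(0,h-|y|)$, so that $v_R(x,y)=q(H_x-|y|)-w_{y_x^{\mathrm{hole}}}(y)$ for $|y|\le H_x$.

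\emph{Tooth vertices} $(x,y)$ with $|y|\ge1$ have degree $2$, so the target is $-\Delta v_R\le 1$. For $1\le|y|\le H_x-1$ both neighbours lie in the range where the formula holds; for $|y|=1$ the backbone neighbour fits it by the consistency check above. The second difference of $q(H_x-|y|)$ in $y$ equals $1$. The second difference of the wedge $w_h$ vanishes except at $|y|=h$, where it equals $+1$. Hence $-\Delta v_R(x,y)=1-\mathbbm{1}_{\{|y|=y_x^{\mathrm{hole}}\}}\le 1$.

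At $|y|=H_x$ we have $v_R=0$ (using $y_x^{\mathrm{hole}}\le H_x$), and the outer neighbour is $0$. The inner neighbour has value $1-\mathbbm{1}_{\{y_x^{\mathrm{hole}}=H_x\}}\le1$, so $-\Delta v_R\le1$. For $|y|>H_x$, $v_R$ vanishes at the vertex and at both neighbours (the one at $|y|=H_x$ is $0$), so $-\Delta v_R=0$.

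\emph{Backbone vertices} $(x,0)$ with $x\ne0$ have degree $4$, so the target is $-\Delta v_R\le 3$. We can write
\[
-\Delta v_R(x,0)=\bigl[f_R(|x|-1)+f_R(|x|+1)-2f_R(|x|)\bigr]+2\bigl(v_R(x,1)-v_R(x,0)\bigr).
\]
Since $y_x^{\mathrm{hole}}\ge1$, the wedge drops by exactly $1$ between $y=0$ and $y=1$. This gives $v_R(x,1)-v_R(x,0)=q(H_x-1)-q(H_x)+1=1-H_x$. On the other hand, \eqref{eq:odometer-backbone} written at the point $|x|$ says exactly that the second difference of $f_R$ equals $2H_x+1$, for $1\le|x|\le R$. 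Here the recursion index $x+1$ runs over $1,\dots,R$, and the case $|x|=R$ uses $f_R(R+1)=0$. This yields $-\Delta v_R(x,0)=2H_x+1+2-2H_x=3$.

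For $|x|>R$ we have $f_R(|x|)=0$. Then $H_x=1$ by the formula \eqref{eq:tooth-heigh}, $y_x^{\mathrm{hole}}=1$, and $v_R\equiv0$ on that tooth. Thus $-\Delta v_R(x,0)$ equals the sum of the backbone neighbours' values: this is $f_R(R)=1\le3$ for $|x|=R+1$ and $0$ beyond.

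I expect the only real obstacle to be bookkeeping at the edge cases, which the steps above are meant to cover. These are the hole sitting at $|y|=1$ or $|y|=H_x$, the top vertex of a tooth, the index range on which \eqref{eq:odometer-backbone} applies, and the value $f_R=0$, where Lemma~\ref{lem:tooth-height-expression} is not literally stated but the formula \eqref{eq:tooth-heigh} still gives $H_x=1$. Together with the symmetry $x\mapsto -x$ coming from the use of $|x|$, these cases exhaust $\mathcal{C}_2\setminus\{o\}$.
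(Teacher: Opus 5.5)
Your proof is correct and is essentially the paper's argument. It gets integrality from the recursion and triangular numbers, bounds the teeth by splitting $v_R$ into the quadratic profile $\frac12(H_x-|y|)(H_x-|y|+1)$ minus the wedge $\max(0,y_x^{\mathrm{hole}}-|y|)$, and obtains $-\Delta v_R(x,0)=3$ on the backbone from \eqref{eq:odometer-backbone} together with $v_R(x,1)-v_R(x,0)=1-H_x$. The differences are cosmetic: you handle $|x|=R$ directly through the recursion using $f_R(R+1)=0$, where the paper computes $7-4=3$ separately, and you check the tooth tips $|y|=H_x$ and the region $|x|>R$ explicitly, which the paper treats only briefly.
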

\begin{proof}
    Since the boundary values of $f_R$ are integers and all terms in \eqref{eq:odometer-backbone} are integer valued, it follows that $f_R$ itself is integer valued. Consequently, since $v_R$ evaluates to $f_R$ on backbone vertices, $v_R$ takes integer values along the backbone. For the tooth vertices, observe that for every $y\in\mathbb{Z}$, the product $(H_x-|y|)(H_x-|y|+1)$ is always even. Hence the corresponding values of $v_R$ along the teeth are also integers.

    For the Laplacian along the teeth, observe that $v_R$ can be written as the sum of a function whose Laplacian is constantly $1$ for all $1\leq y\leq H_x$ and $0$ for $y>H_x$, minus a function whose Laplacian vanishes everywhere except at $y_x^{\mathrm{hole}}$, where it equals $1$. Therefore, the Laplacian along the teeth is always bounded above by $1$. It remains to consider the backbone vertices. Notice that
    $-\Delta v_R(R+1,0)=f_R(R)=1\leq 3$ and the same argument applies to the backbone vertex $(-R-1,0)$. For the Laplacian at $(R,0)$, we first use the recursion \eqref{eq:odometer-backbone} to compute $f_R(R-1)=7$. Since $H_R=2$ and $y_R^{\mathrm{hole}}=2$, it follows that $v_R(R,\pm 1)=0$. Hence, $-\Delta v_R(R,0)=7-4=3$, and the same calculation applies at $(-R,0)$. For the remaining backbone vertices $(x,0)\in\mathcal{C}_2$ with $x\neq 0$ and $|x|<R$, we obtain
    \begin{align*}
        -\Delta v_R(x,0)&=f_R(|x|-1)+f_R(|x|+1)+H_x(H_x-1)-2(y_x^{\text{hole}}-1)-4f_R(|x|)\\
        &=f_R(|x|-1)+f_R(|x|+1)+(H_x^2-H_x)-(H_x^2+H_x-2f_R(|x|)-2)-4f_R(|x|)\\
        &=f_R(|x|-1)+f_R(|x|+1)-2f_R(|x|)-2H_x+2\\
        &=\Big(f_R(|x|-1)+f_R(|x|+1)-2f_R(|x|)-2\Big\lfloor \frac{1+\sqrt{1+8f_R(|x|)}}{2}\Big\rfloor-1\Big)+3.
    \end{align*}
    Since $f_R$ solves \eqref{eq:odometer-backbone}, the first term on the right-hand side above is zero, thus $-\Delta v_R(x,0)=3$. For all other vertices it holds $\Delta v_R=0$, and this completes the proof.
\end{proof}

In order for the function constructed in Lemma \ref{lem:outer-bound-stable} to provide a suitable outer bound, we need to determine the asymptotic behavior of $f_R$ and of the corresponding tooth heights.
\begin{lemma}\label{lem:outer-bound-asymptotics}
    Let $R\in\mathbb{N}$, and let $f_R$ be the solution of \eqref{eq:odometer-backbone} with boundary conditions $f_R(R)=1$ and $f_R(x)=0$ for all $x>R$. Then, for every $0\leq x\leq R+1$, one has
\[
    \frac{(R+1-x)^4}{18}+\frac{(R+1-x)^2}{2}\leq f_R(x)\leq\frac{(R+1-x)^4}{18}+(R+1-x)^3.
\]
\end{lemma}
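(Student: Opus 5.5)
The plan is to run a discrete comparison argument for the recursion, read backwards from the right endpoint. We reparametrise by the distance $k=R+1-x$ and set $g_k:=f_R(R+1-k)$ for $0\le k\le R+1$. Then $g_0=0$, $g_1=1$, and by \eqref{eq:odometer-backbone} together with Lemma~\ref{lem:tooth-height-expression} the sequence satisfies the second-order recursion
\[
g_{k+1}-2g_k+g_{k-1}=2\,g(g_k)+1,\qquad g(m)=\Big\lfloor \tfrac{1+\sqrt{1+8m}}{2}\Big\rfloor .
\]
From the definition of the floor we get the two-sided bound
\[
\sqrt{1+8m}<2g(m)+1\le \sqrt{1+8m}+2 ,
\]
and $m\mapsto g(m)$ is nondecreasing.

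Write $L_k=\tfrac{k^4}{18}+\tfrac{k^2}{2}$ and $U_k=\tfrac{k^4}{18}+k^3$. Their second differences are
\[
\Delta^2L_k=\tfrac{2}{3}k^2+\tfrac{10}{9},\qquad \Delta^2U_k=\tfrac{2}{3}k^2+6k+\tfrac{1}{9}.
\]
The comparison principle I will use is the standard one for second-order difference inequalities. Suppose $g_k\ge L_k$ and $g_k-g_{k-1}\ge L_k-L_{k-1}$, and suppose $\Delta^2 g_k\ge \Delta^2 L_k$. Then $g_{k+1}\ge L_{k+1}$ and $g_{k+1}-g_k\ge L_{k+1}-L_k$. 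The same holds with all inequalities reversed for $U$. So I will induct on $k$ with the pair (value, first increment) as the hypothesis, and the statement of the lemma follows.

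For the lower bound, the induction hypothesis $g_k\ge L_k$ and the monotonicity of $g(\cdot)$ give
\[
2g(g_k)+1>\sqrt{1+8L_k}=\sqrt{\tfrac49k^4+4k^2+1}.
\]
For $k\ge2$ this is at least $\tfrac23k^2+2$, which is $\ge\Delta^2L_k$; indeed $(\tfrac23k^2+2)^2\le \tfrac49k^4+4k^2+1$ as soon as $\tfrac43k^2\ge3$.

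For the upper bound, $g_k\le U_k$ gives
\[
2g(g_k)+1\le \sqrt{\tfrac49k^4+8k^3+1}+2.
\]
Here the crude estimate $\sqrt{\cdot}\le \tfrac23k^2+6k$ is not enough, because the $+2$ beats the $\tfrac19$. I will instead show $\sqrt{\tfrac49k^4+8k^3+1}\le \tfrac23k^2+6k-2$ for $k$ beyond a small threshold. Squaring, the right side is $\tfrac49k^4+8k^3+\tfrac{100}{3}k^2-24k+4$, which dominates the left side once $k\ge1$. Hence $2g(g_k)+1\le \tfrac23k^2+6k\le\Delta^2U_k$ in that range.

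The remaining work, and what I expect to be the main obstacle, is the handful of small $k$: the base of the induction ($k=0,1$, where $g_0=0$, $g_1=1$, $g_2=7$) and any $k$ below the thresholds above. There the real-variable slack is tight, so I will check these cases by direct computation of $g_k$ from the recursion. I will also use integrality of $g_k$, since the lower bound $L_k$ equals $0$ and $\tfrac59$ at $k=0,1$. If the increment condition fails at the very first step, I will start the induction at $k=2$ or $k=3$ after verifying both inequalities for $g_0,\dots,g_3$ by hand. The range $0\le x\le R+1$ corresponds exactly to $0\le k\le R+1$, which the induction covers.
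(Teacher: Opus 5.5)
Your proposal is correct and is essentially the paper's argument: the same reversed-index recursion, the same comparison functions $\frac{k^4}{18}+\frac{k^2}{2}$ and $\frac{k^4}{18}+k^3$, and the same induction on the value together with the first increment. Your explicit handling of the $+2$ slack in the upper bound, via $\sqrt{\tfrac49k^4+8k^3+1}\le\tfrac23k^2+6k-2$, fills in what the paper only calls ``the same way''.

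The small-$k$ difficulties you anticipate do not actually arise. The base values $g_0=0$, $g_1=1$ already satisfy both the value and increment conditions for $L$ and $U$. The upper-bound step holds for all $k\ge1$, and the lower-bound step at $k=1$ is just $5\ge\tfrac{16}{9}$.
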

\begin{proof} Let us first reformulate the target function in a slightly different way, eliminating the dependence on $R$. Define a function $f:\mathbb{N}_0\to\mathbb{Z}$ by the initial conditions $f(0)=0$ and $f(1)=1$ and require that, for all $x>1$, it satisfies the recursion
\[
f(x+1)=2f(x)-f(x-1)+2\Big\lfloor \frac{1+\sqrt{1+8f(x)}}{2}\Big\rfloor+1.
\]
For  $x\in\N_0$ it holds $\sqrt{1+8x}\leq 2\Big\lfloor \frac{1+\sqrt{1+8x}}{2}\Big\rfloor+1\leq \sqrt{1+8x}+2$. Our strategy for obtaining an appropriate lower bound is to construct a function $L:\mathbb{N}_0\to\mathbb{R}$ such that $L(0)=0$, $L(1)\leq 1$
and, for every $x>1$, $\Delta L(x)\leq \sqrt{8L(x)+1}$. Letting $L(x)=x^4/18+x^2/2$, we get
$L(0)=0$, $L(1)=\frac{1}{18}+\frac{1}{2}<1$, and for the Laplacian at $x>1$ we obtain
    \begin{align*}
        \Delta L(x)^2&=\Big(\frac{2}{3}x^2+\frac{10}{9}\Big)^2=\frac{4}{9}x^4+\frac{40}{27}x^2+\frac{100}{81}\leq \frac{4}{9}x^4+4x^2+1=8L(x)+1.
    \end{align*}
    To complete the proof of the lower bound, we use the discrete comparison principle. Define $\varepsilon(x)=f(x)-L(x)$ and $\delta(x)=\varepsilon(x)-\varepsilon(x-1)$. Then $\varepsilon(0)=0$, $\varepsilon(1)=4/9$, and consequently $\delta(1)=4/9$. Now let $x>1$, and suppose that for every $z<x$ we have $\varepsilon(z)\geq 0$ and $\delta(z)\geq 0$. We first examine the Laplacian of the accumulated error:
\[
\Delta \varepsilon(x-1)=\Delta f(x-1)-\Delta L(x-1)\geq\sqrt{8f(x-1)+1}-\sqrt{8L(x-1)+1}\geq 0,
\]
where the last inequality follows from $f(x-1)\geq L(x-1)$. Hence, for the first-order difference sequence $\delta$, we obtain
\[
\delta(x)-\delta(x-1)=\bigl(\varepsilon(x)-\varepsilon(x-1)\bigr)-\bigl(\varepsilon(x-1)-\varepsilon(x-2)\bigr)=\Delta \varepsilon(x-1)\geq 0,
\]
which implies $\delta(x)\geq \delta(x-1)\geq 0$. 
By induction, it follows that $\delta(x)\geq 0$ for all $x\in\mathbb{N}_0$, and hence $f(x)\geq L(x)$ for every $x\in\mathbb{N}_0$. The lower bound asserted in the lemma is then obtained by shifting this inequality by $R$. The upper bound is proved in the same way, using instead the comparison function $U(x)=\frac{x^4}{18}+x^3$ for $x\in\mathbb{N}_0$.
\end{proof}

From Lemma \ref{lem:outer-bound-asymptotics}, we immediately derive the corresponding asymptotics for the tooth heights and for the Laplacian of the function $v_R$ at the origin, where $v_R$ is defined as in Lemma \ref{lem:outer-bound-stable}.

\begin{lemma}\label{lem:outer-bound-teeth-and-laplace}
For  $R\in\mathbb{N}$ and $v_R:\mathcal{C}_2\to\mathbb{Z}$ defined as in Lemma \ref{lem:outer-bound-stable} it holds
    $$\frac{4}{9}R^3+\frac{4}{3}R^2+\frac{32}{9}R+\frac{2}{3}\leq \Delta v_R(0,0)\leq \frac{4}{9}R^3+\frac{22}{3}R^2+\frac{158}{9}R+\frac{23}{3},$$
    and 
    \begin{align*}
    \Big\{(x,y)\in\mathcal{C}_2\;:\; \vert x\vert\leq R&\text{ and }\vert y\vert \leq \frac{1}{3}\big(R+1-|x|\big)^2\Big\}\subseteq \text{supp}(v_R)\\
    &\subseteq\Big\{(x,y)\in\mathcal{C}_2\;:\; \vert x\vert\leq R \text{ and } \vert y\vert \leq \frac{1}{3}\big(R+1-|x|\big)^2+3\big(R+1-|x|\big)+\frac{1}{2}\Big\}.\end{align*}
\end{lemma}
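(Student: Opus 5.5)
The plan is to reduce both claims to explicit formulas in terms of $f_R$ and the tooth heights $H_x$, and then bound these using Lemma~\ref{lem:outer-bound-asymptotics} and Lemma~\ref{lem:tooth-height-expression}.

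\emph{Step 1: the Laplacian at the origin.} Write $H_x$ as in \eqref{eq:tooth-heigh}; note that $H_x=H_{|x|}$ depends only on $|x|$.

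First compute $v_R(0,\pm1)$. Since $y_0^{\mathrm{hole}}\ge 1$, we have
\[
v_R(0,\pm1)=\tfrac12(H_0-1)H_0-\bigl(y_0^{\mathrm{hole}}-1\bigr)=f_R(0)-H_0+1.
\]
With the Laplacian convention of the paper (degree on the diagonal), this gives
\[
\Delta v_R(0,0)=4f_R(0)-2f_R(1)-2v_R(0,1)=2\bigl(f_R(0)-f_R(1)\bigr)+2H_0-2.
\]
Next, the recursion \eqref{eq:odometer-backbone} says exactly that the backward differences satisfy
\[
f_R(x)-f_R(x+1)=f_R(x+1)-f_R(x+2)+2H_{x+1}+1 .
\]
Telescoping from $x=0$ up to $x=R$, using $f_R(R)-f_R(R+1)=1$, yields $f_R(0)-f_R(1)=1+R+2\sum_{j=1}^{R}H_j$. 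Hence
\[
\Delta v_R(0,0)=2R+2H_0+4\sum_{j=1}^{R}H_j .
\]
The whole problem is thereby reduced to two-sided bounds on the individual $H_j$.

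\emph{Step 2: bounds on $H_j$.} Set $a=R+1-j$. Lemma~\ref{lem:tooth-height-expression} gives $(H_j-1)H_j\le 2f_R(j)<(H_j+1)H_j$, which implies
\[
\sqrt{2f_R(j)+\tfrac14}-\tfrac12< H_j\le \sqrt{2f_R(j)+\tfrac14}+\tfrac12 .
\]
Inserting Lemma~\ref{lem:outer-bound-asymptotics}, the lower estimate $2f\ge a^4/9+a^2$ gives $\sqrt{2f+1/4}\ge a^2/3+3/2$, since $(a^2/3+3/2)^2=a^4/9+a^2+9/4$; this needs the lower order terms handled, but it yields something like $H_j\ge a^2/3+c_1$. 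The upper estimate $2f\le a^4/9+2a^3$ gives $\sqrt{2f+1/4}\le a^2/3+3a+\tfrac12$, by comparing with $(a^2/3+3a+\tfrac12)^2$. So $H_j\le a^2/3+3a+1$.

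Summing over $j$, i.e.\ $a$ from $1$ to $R$ for $j=1,\dots,R$ together with $a=R+1$ for $j=0$, using $\sum a^2$ and $\sum a$, produces cubic polynomials in $R$ with leading coefficient $4\cdot\frac13\cdot\frac{R^3}{3}=\frac49R^3$. I would then match the lower order coefficients against the stated constants, carefully tracking the boundary term $2H_0$. I expect this constant bookkeeping to be the main obstacle. It is tedious, and the precise additive constants in the $H_j$ bounds must be chosen exactly so as to reproduce $\frac43R^2+\frac{32}{9}R+\frac23$ and $\frac{22}{3}R^2+\frac{158}{9}R+\frac{23}{3}$. If they do not match exactly, I would use the slightly cruder bounds, which still give the needed $\frac49R^3+O(R^2)$.

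\emph{Step 3: the support.} For $|x|>R$ we have $f_R(|x|)=0$, so $H_x=1$ and $y_x^{\mathrm{hole}}=1$, and $v_R$ vanishes off the backbone. On the backbone $v_R(x,0)=f_R(|x|)=0$ for $|x|>R$. This gives the horizontal restriction $|x|\le R$.

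For $|x|\le R$ and $|y|\le H_x-1$, the first term $\frac12(H_x-|y|)(H_x-|y|+1)\ge \frac12(H_x-|y|)\cdot 2\ge H_x-|y|$ dominates the hole correction $y_x^{\mathrm{hole}}-|y|\le H_x-|y|$. Equality is only possible in the corner case $|y|=H_x-1$ with $y_x^{\mathrm{hole}}=H_x$, which I would treat directly. On the other hand $v_R=0$ for $|y|\ge H_x$. Thus $\mathrm{supp}(v_R)$ on the tooth over $x$ lies between $\{|y|\le H_x-2\}$ and $\{|y|\le H_x-1\}$.

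The two inclusions then follow from the Step~2 bounds $H_x-2\ge \frac13(R+1-|x|)^2$ and $H_x-1\le \frac13(R+1-|x|)^2+3(R+1-|x|)+\frac12$. For the first of these, the lower bound on $H_j$ must be sharp enough to allow subtracting $2$; if it is not, I would refine the lower estimate in Lemma~\ref{lem:outer-bound-asymptotics} via the same comparison argument.
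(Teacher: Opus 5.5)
Your strategy is the paper's. Step~1 reproduces exactly its identity $\Delta v_R(0,0)=2R+2H_0+4\sum_{j=1}^{R}H_j$, obtained from $v_R(0,\pm1)=f_R(0)-H_0+1$ and by telescoping \eqref{eq:odometer-backbone}. The rest is bounding each $H_j$ via Lemma~\ref{lem:outer-bound-asymptotics}.

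The gap is that the two bounds you write in Step~2 do not give the stated polynomials.

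\emph{Lower bound.} It is wrong as derived. $(\frac{a^2}{3}+\frac32)^2=\frac{a^4}{9}+a^2+\frac94$ is larger than $\frac{a^4}{9}+a^2+\frac14$, so $2f+\frac14\ge\frac{a^4}{9}+a^2+\frac14$ does not imply $\sqrt{2f+\frac14}\ge\frac{a^2}{3}+\frac32$. The correct comparison is with $(\frac{a^2}{3}+\frac12)^2=\frac{a^4}{9}+\frac{a^2}{3}+\frac14$, which gives $H_j>\frac{a^2}{3}$.

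\emph{Upper bound.} Your $H_j\le\frac{a^2}{3}+3a+1$ loses $\frac12$ per tooth, and after summation it overshoots the claimed bound by $2R+1$. Retreating to $\frac49R^3+O(R^2)$ would suffice for Proposition~\ref{prop:outer-bound}, but it does not prove the lemma as stated. Compare instead with $(\frac{a^2}{3}+3a)^2=\frac{a^4}{9}+2a^3+9a^2\ge\frac{a^4}{9}+2a^3+\frac14$, so that $\sqrt{2f+\frac14}\le\frac{a^2}{3}+3a$. The resulting bounds
\[
\tfrac{a^2}{3}\le H_j\le\tfrac{a^2}{3}+3a+\tfrac12,\qquad a=R+1-j,
\]
are exactly the ones the paper uses. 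Inserting them into $2R+2H_0+4\sum_{j=1}^{R}H_j$ gives precisely $\frac49R^3+\frac43R^2+\frac{32}{9}R+\frac23$ and $\frac49R^3+\frac{22}{3}R^2+\frac{158}{9}R+\frac{23}{3}$. So the bookkeeping you flagged as the main obstacle disappears once these two comparison squares are used.

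In Step~3 your description of the support is correct and more careful than the paper's. For $|x|\le R$ the support on the tooth over $x$ lies between $\{|y|\le H_x-2\}$ and $\{|y|\le H_x-1\}$, and the outer inclusion follows from the upper bound on $H_x$.

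For the inner inclusion you rightly note that $H_x\ge\frac{a^2}{3}$ is not enough; one needs $H_x\ge\lfloor a^2/3\rfloor+2$. For $3\mid a$ this amounts to $f_R(|x|)\ge\frac{a^4}{18}+\frac{a^2}{2}+1$, which is just beyond what Lemma~\ref{lem:outer-bound-asymptotics} provides. The paper's proof asserts that $H_x\ge z^2/3$ ``proves the two inclusions'', so it has the same off-by-two omission; neither argument actually closes it.

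Your suggested refinement does work. In the notation of the proof of Lemma~\ref{lem:outer-bound-asymptotics}, take $L(a)=\frac{a^4}{18}+\frac{a^3}{5}+\frac{a^2}{2}$. It satisfies $\Delta L(a)\le\sqrt{8L(a)+1}$ for $a\ge3$, together with $L(2)\le f(2)=7$, $L(3)\le f(3)=22$ and $L(3)-L(2)\le f(3)-f(2)$. The same induction started at $a=3$ then yields $f(a)\ge\frac{a^4}{18}+\frac{a^2}{2}+1$ for all $a\ge2$, hence $H>\frac{a^2}{3}+1$. For $a=1$ the inner set is $\{(R,0)\}$ and $v_R(R,0)=1$, so that case is immediate.

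Proposition~\ref{prop:outer-bound} uses only the outer inclusion and the lower Laplacian bound, so this point does not affect Theorem~\ref{thm:main2}.
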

\begin{proof}
    We first analyze the asymptotic behavior of the tooth heights. Observe that
    \[
    \frac{-1+\sqrt{1+8f_R(|x|)}}{2}\leq H_x\leq \frac{1+\sqrt{1+8f_R(|x|)}}{2}.
    \]

  We may now insert the bounds from Lemma \ref{lem:outer-bound-asymptotics} to obtain explicit bounds on the tooth heights. For readability, set $z=R+1-|x|$. For the lower bound, we have
    \begin{align*}
        H_x&\geq\frac{-1+\sqrt{1+8\big(\frac{z^4}{18}+\frac{z^2}{2}\big)}}{2}\geq \frac{-1+\sqrt{\big(\frac{2}{3}z^2+1\big)^2}}{2}\geq \frac{z^2}{3}.
    \end{align*}
    The upper bound is obtained analogously:
    \begin{align*}
        H_x\leq \frac{1+\sqrt{1+8\big(\frac{z^4}{18}+z^3\big)}}{2}\leq \frac{1+\sqrt{\big(\frac{2}{3}z^2+6z\big)^2}}{2}=\frac{1}{3}z^2+3z+\frac{1}{2}.
    \end{align*}
    The two inequalities above prove the two inclusions of the support as stated in this lemma. We now move onto the Laplacian at the origin. We have
    \begin{align*}
        \sum_{x=1}^R \left(f_R({x-1})-2f_R(x)+f_R({x+1})\right)=\sum_{x=1}^R\left(2H_x+1\right).
    \end{align*}
    The left-hand side of this equation is a telescoping sum. Using $f_R(R)=1$ and $f_R(R+1)=0$, we obtain
    \[
    f_R(0)-f_R(1)=R+1+2\sum_{x=1}^R H_x.
    \]
Substituting this identity into the formula for the Laplacian at the origin yields
\[
-\Delta v_R(0,0)=-2\bigl(f_R(0)-f_R(1)\bigr)+2-2H_0=-2R-2H_0-4\sum_{x=1}^R H_x.
\]
Using the previously established upper and lower bounds for the tooth heights, we obtain
the first claim of the lemma.
\end{proof}
With these asymptotic estimates established, we are now ready to prove the outer bound on the limit shape for the single-source abelian sandpile stated in Theorem \ref{thm:main2}.
\begin{figure}[htb]
    \centering
    \begin{minipage}{0.49\textwidth}
        \centering
        \includegraphics[width=0.5\linewidth]{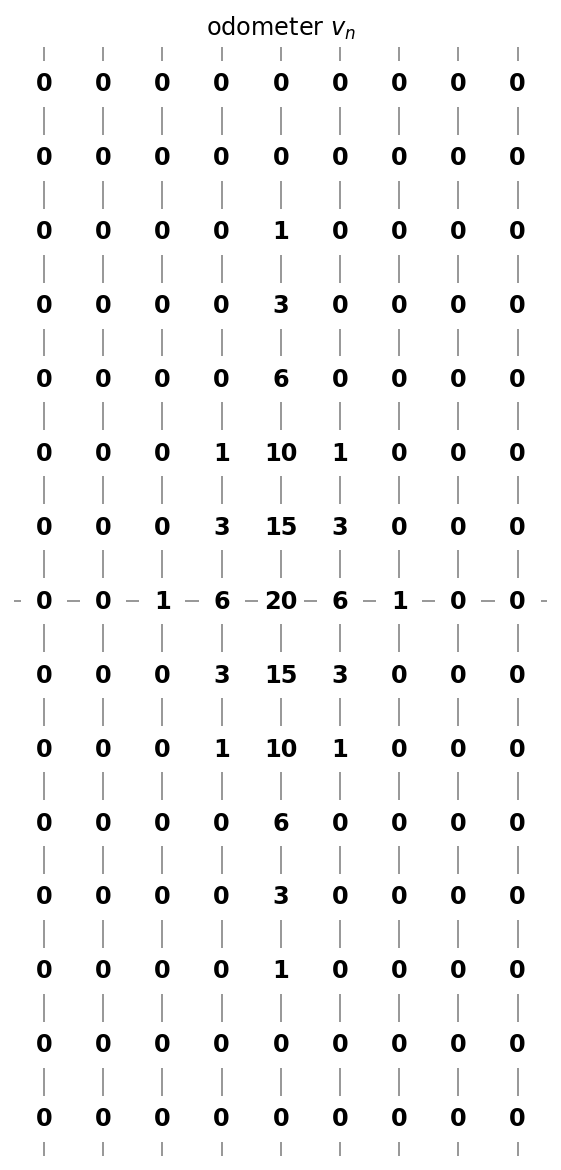}
    \end{minipage}
    \begin{minipage}{0.49\textwidth}
        \centering
        \includegraphics[width=0.5\linewidth]{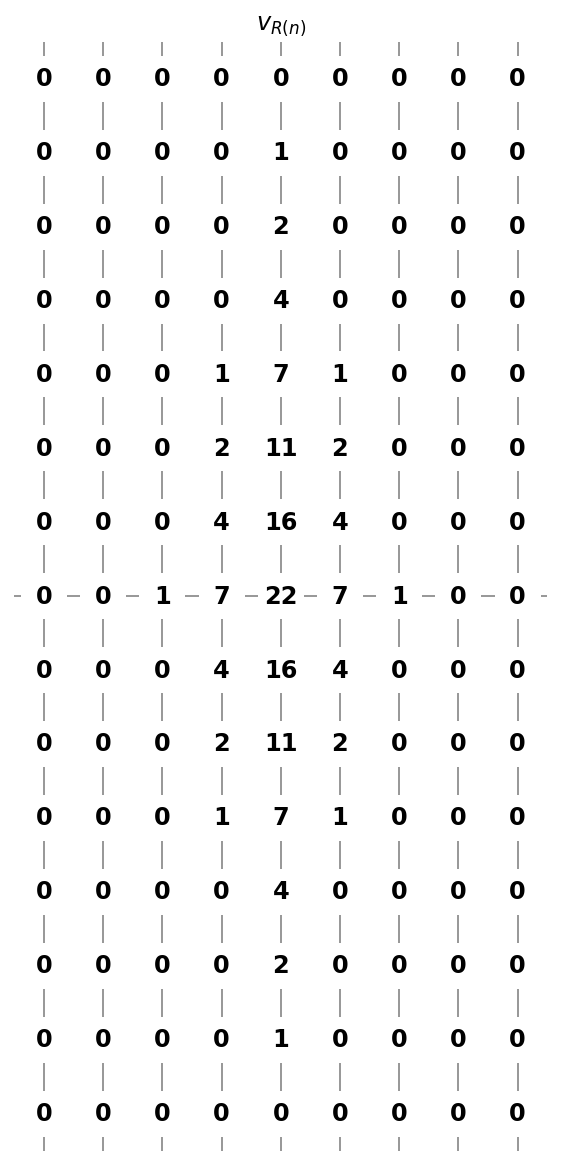}
    \end{minipage}
    \caption{On the left, the odometer resulting from the stabilization of the configuration $n\delta_o$ with $n=40$ is displayed. On the right, we display the function $v_{R(n)}$, with $R(n)=2$.}
    \label{fig:odometer}
\end{figure}
\begin{proposition}\label{prop:outer-bound}
     For $n\in\mathbb{N}$,  let $\mathcal{S}_n $ denote the set of vertices in $\mathcal{C}_2$ that topple during the stabilization of the sandpile configuration $n\delta_o$  of $n$ particles placed at $o=(0,0)$. Then, for every \(\varepsilon>0\), there exists \(N(\varepsilon)\in\mathbb{N}\) such that, for all \(n\ge N(\varepsilon)\), it holds
    $\mathcal{S}_n\subseteq \mathcal{B}_{(1+\varepsilon)n}$.
\end{proposition}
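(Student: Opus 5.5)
Given $n$, we compare the abelian sandpile odometer $u_n$ of $n\delta_o$ with the function $v_R$ of Lemma~\ref{lem:outer-bound-stable} for a suitably chosen $R=R(n)$.

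\textbf{Choice of $R$.} Set $m_R:=-\Delta v_R(0,0)+3$ (this is the sign convention that makes the final configuration at $o$ equal to $3$). By Lemma~\ref{lem:outer-bound-teeth-and-laplace},
\[
m_R=\tfrac49R^3+O(R^2),
\]
so $m_R$ is strictly increasing for large $R$ and tends to infinity. Define $R(n)$ to be the smallest $R$ with $m_R\geq n$. Then $m_{R-1}<n$, and since $m_R-m_{R-1}=O(R^2)=o(n)$, we get
\[
R(n)=\Big(\tfrac{9n}{4}\Big)^{1/3}(1+o(1)).
\]

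\textbf{$v_R$ stabilises $m_R\delta_o$.} Consider the configuration
\[
\sigma:=m_R\delta_o-\Delta v_R,
\]
with the sign convention under which toppling according to $v$ gives $\eta-\Delta v$ as in the paper. Lemma~\ref{lem:outer-bound-stable} shows $\sigma\le \deg-1$ off $o$, and at $o$ we have $\sigma(o)=3$ by the choice of $m_R$. We also need $\sigma\geq 0$: on tooth vertices inside the support the Laplacian is $1$ or $0$ (the latter at the hole), on the backbone it is exactly $3$, and outside the support it is nonnegative because $v_R\ge0$ vanishes there. So $\sigma$ is a stable nonnegative configuration, and $v_R$ is a nonnegative integer function with
\[
m_R\delta_o-\Delta v_R\le \deg-1 .
\]
By the least action principle for the abelian sandpile (the integer version of the one quoted for divisible sandpiles), the odometer $u_{m_R}$ of $m_R\delta_o$ satisfies $u_{m_R}\le v_R$. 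Hence
\[
\mathcal S_{m_R}\subseteq \operatorname{supp}(v_R).
\]

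\textbf{Monotonicity in $n$.} Since $n\le m_R$, the configuration $n\delta_o\le m_R\delta_o$. By the abelian property, stabilising $m_R\delta_o$ can be done by first stabilising $n\delta_o$, then adding the remaining $m_R-n$ particles and stabilising again. Therefore $u_n\le u_{m_R}$, and
\[
\mathcal S_n\subseteq \mathcal S_{m_R}\subseteq\operatorname{supp}(v_{R(n)}).
\]

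\textbf{Comparing with $\mathcal B_{(1+\varepsilon)n}$.} By Lemma~\ref{lem:outer-bound-teeth-and-laplace}, $\operatorname{supp}(v_R)$ is contained in the set where $|x|\le R$ and
\[
|y|\le \tfrac13(R+1-|x|)^2+3(R+1-|x|)+\tfrac12 .
\]
Since $R(n)+1=R_n(1+o(1))$, for large $n$ we have $R(n)+1\le R_{(1+\varepsilon/2)n}$, which handles the horizontal extent. For the vertical extent we must show
\[
\tfrac13 z^2+3z+\tfrac12\le \tfrac13(z+c)^2 \qquad\text{with } z=R+1-|x|,\quad c=R_{(1+\varepsilon)n}-R-1\sim \big((1+\varepsilon)^{1/3}-1\big)R_n.
\]
Because $c\to\infty$, for large $n$ we have $c\ge 10$, so $\tfrac23 zc\ge 3z$ and $\tfrac13c^2\ge \tfrac12$; the inequality follows uniformly in $z\ge 0$.

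\textbf{Main obstacle.} The delicate step is the stability and nonnegativity check of $\sigma$ near the edges of the support, in particular at $(\pm R,0)$, at $(\pm(R+1),0)$, and at the hole sites. One must confirm that the least action principle applies with these exact values, and that no negative heights appear at the tooth tops, where $-\Delta v_R$ involves $v_R(x,H_x)=0$. The remaining steps are bookkeeping with the asymptotics already available from Lemmas~\ref{lem:outer-bound-asymptotics} and~\ref{lem:outer-bound-teeth-and-laplace}.
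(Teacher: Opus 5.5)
Your proposal is correct and follows essentially the same route as the paper. You choose the minimal $R$ for which $v_R$ absorbs at least $n$ particles at $o$, use Lemma~\ref{lem:outer-bound-stable} and the least action principle to get $\mathcal{S}_n\subseteq\operatorname{supp}(v_{R(n)})$, and compare with $\mathcal{B}_{(1+\varepsilon)n}$ via Lemma~\ref{lem:outer-bound-teeth-and-laplace}; your explicit absorption of the $3z+\tfrac12$ correction through $c\to\infty$ spells out a step the paper leaves implicit.

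Minor remarks follow. With the paper's convention ($\Delta=D-A$, final configuration $\eta-\Delta v$), the right choice is $m_R=\Delta v_R(0,0)+3$, not $-\Delta v_R(0,0)+3$. The detour via $m_R\delta_o$ and monotonicity in $n$, and the check $\sigma\ge0$, are unnecessary: $n\delta_o-\Delta v_R\le m_R\delta_o-\Delta v_R\le\deg-1$, and the least action principle does not require nonnegativity, so it applies to $n\delta_o$ directly as in the paper. Finally, the claimed monotonicity of $m_R$ does not follow from an $O(R^2)$ error term, though you never actually use it.
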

\begin{proof}
Stabilize the sandpile $n\delta_o$ according to the abelian sandpile toppling procedure, and denote the resulting odometer function by $v_n$. Define $R(n)$ by
\[
R(n)=\inf\bigl\{R\in\mathbb{N}:\ \Delta v_R(o)\geq n\bigr\},
\]
where, for each $R\in\mathbb{N}$, the function $v_R$ is the one introduced in Lemma \ref{lem:outer-bound-stable}. Then for all $x\in\mathcal{C}_2$ 
    $$n\delta_o-\Delta v_{R(n)}(x)< \deg(x),$$
    and thus by the least action principle $v_{R(n)}\geq v_n.$  By the asymptotic estimate for the Laplacian of $v_R$ at $o=(0,0)$ in Lemma \ref{lem:outer-bound-teeth-and-laplace}, for every $\varepsilon>0$ there exists $N(\varepsilon)\in\mathbb{N}$ such that, for all $n\geq N(\varepsilon)$,
    \[
    R(n)\leq \left(1+\frac{\varepsilon}{2}\right)^{1/3} \Big(\frac{3}{2}\Big)^{2/3}n^{1/3}.
    \]
    This together with the second claim from Lemma \ref{lem:outer-bound-teeth-and-laplace}
    implies that for $n$ sufficiently large
    $$\mathcal{S}_n\subseteq \Big\{ (x,y)\in\mathcal{C}_2\; :\; |x|\leq \Big((1+\varepsilon)\frac{9n}{4}\Big)^{1/3} \text{ and } |y|\leq \frac{1}{3}\Big(\Big((1+\varepsilon)\frac{9n}{4}\Big)^{1/3}-|x|\Big)^2\Big\}=\mathcal{B}_{(1+\varepsilon)n}.$$
\end{proof}
In Figure \ref{fig:odometer}, we display the odometer function $v_{40}$ together with the comparison function $v_{R(40)}$, which is an upper bound for the odometer, as established in the proof of Proposition \ref{prop:outer-bound}.

\begin{proof}[Proof of Theorem \ref{thm:main2}]
    The statement follows from Proposition \ref{prop:inner-bound} and \ref{prop:outer-bound}.
\end{proof}

\section{Concluding remarks and open questions}

The results of this paper motivate several directions for future research, some of which we highlight below.

\textbf{Conditions for limit shape universality.} Theorem \ref{thm:main2} shows that the single-source limit shape of the abelian sandpile coincides with the limit shape of the divisible sandpile, IDLA, and rotor-router aggregation \cite{IDLA-comb-2012}. The same type of limit shape universality has also been proved for the infinite double-sided Sierpi\'nski gasket graph \cite{CK20, IDLA-gasket-2020, Div-sand-gasket-2019}. 
Interestingly, although the abelian sandpile limit shape on $\mathbb{Z}^2$ and in higher dimensional Euclidean lattices remains unproved, it is widely believed that on such lattices the abelian sandpile limit shape is not a Euclidean ball. Thus, it is expected to differ from the limit shape of the divisible sandpile, IDLA, and rotor-router aggregation. This raises the question of which structural properties are shared by the comb lattice and the Sierpi\'nski gasket, causing all four models to exhibit the same limit shape, while on $\mathbb{Z}^2$ the abelian sandpile appears to behave differently. We believe that the answer is closely connected to the behavior of loop-erased random walk on the underlying state space.

\begin{question}
    Given a countably infinite graph $G$, determine necessary and sufficient conditions under which the abelian sandpile, divisible sandpile, IDLA, and rotor-router aggregation all exhibit the same limit shape.
\end{question}

\begin{figure}
    \centering
    \begin{minipage}{0.42\textwidth}
        \centering
        \includegraphics[height=8.5cm]{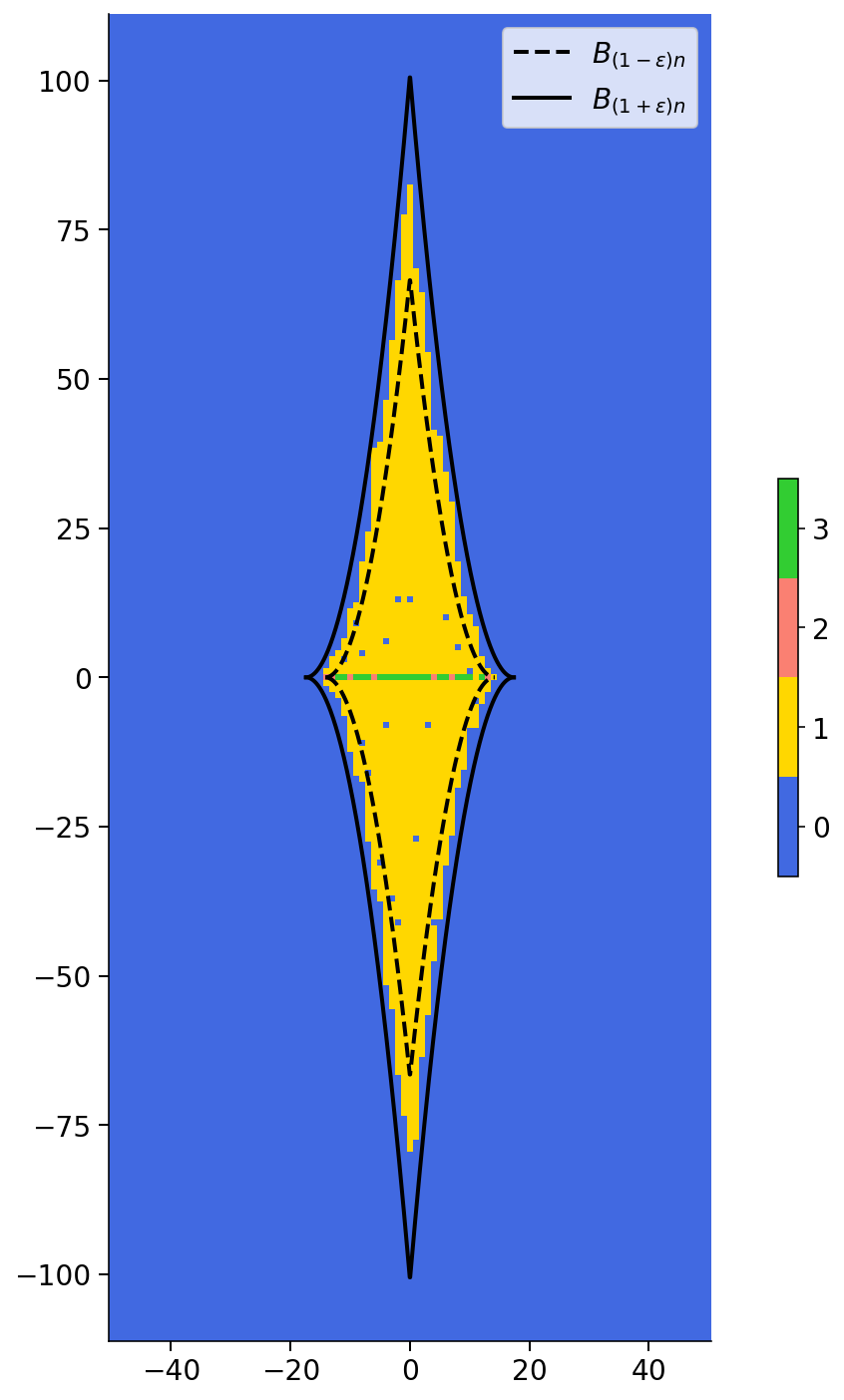}
    \end{minipage}
    \hfill
    \begin{minipage}{0.42\textwidth}
        \centering
        \includegraphics[height=8.5cm]{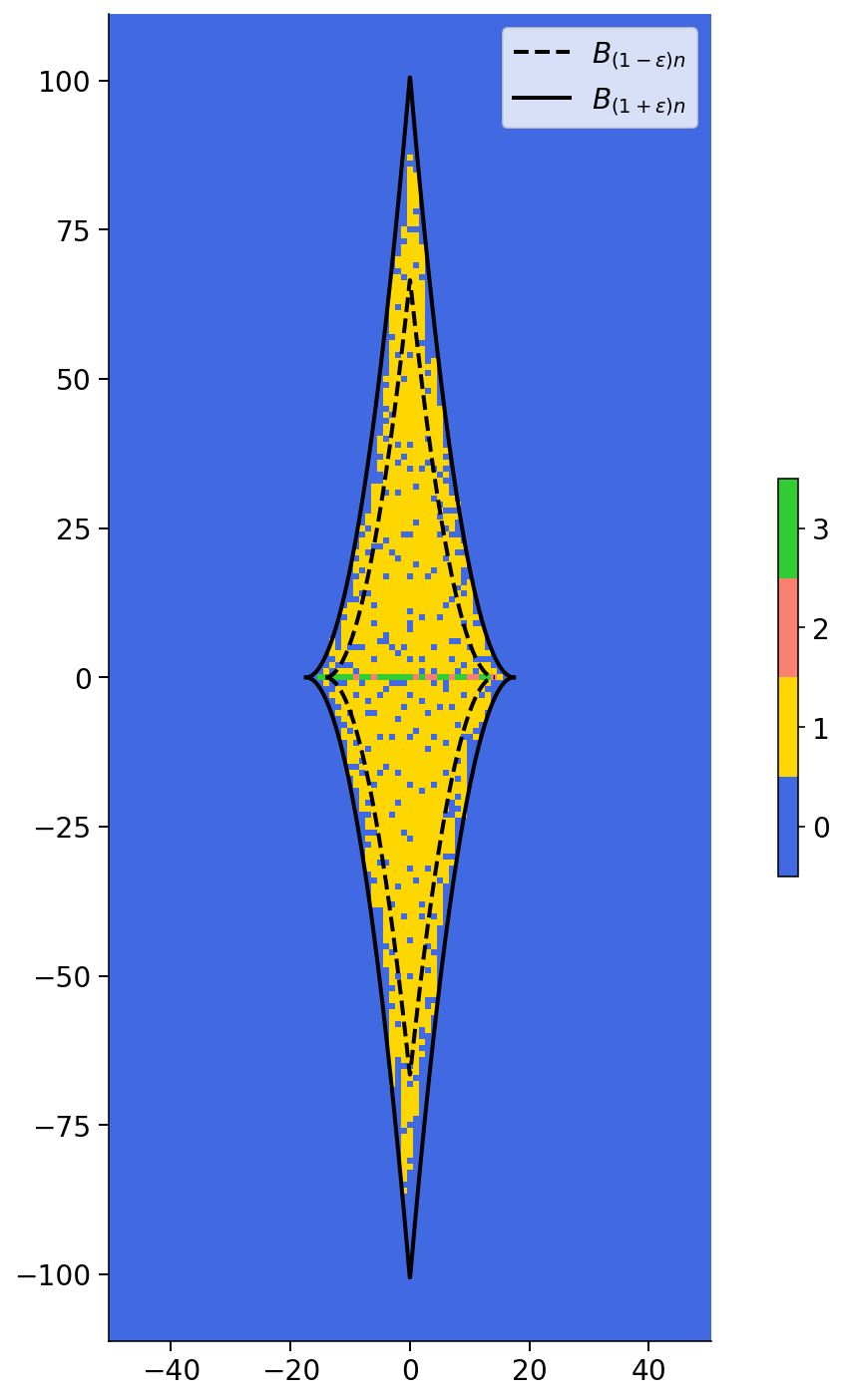}
    \end{minipage}
    \caption{Limit shape of the single-source stochastic sandpile on the comb, obtained by placing $n$ particles at $o=(0,0)$ and stabilizing. The dashed lines indicate the boundary of $\mathcal{B}_{(1-\varepsilon)n}$, while the solid lines indicate the boundary of $\mathcal{B}_{(1+\varepsilon)n}$. In both figures, $\varepsilon=0.3$ and $n=1800$. On the left, unstable vertices topple according to $p$-topplings with $p=1/2$ and on the right, each unstable vertex sends two particles to neighbors chosen independently and uniformly.}
    \label{fig:simulation_stochastic_model}
\end{figure}

\textbf{Single-source limit shape for stochastic sandpiles.} 
Stochastic sandpiles generalize the abelian sandpile model by allowing the toppling operations themselves to be random. Two examples of such randomized toppling mechanisms are the $p$-topplings \cite{p-topplings}, in which, whenever a vertex topples, each incident edge is chosen independently with probability $p\in[0,1]$ to receive one particle, and the Manna model \cite{Manna1991TwoState}, in which each toppled particle independently performs one random walk step. 
We expect that, for both of these models, the single-source limit shape on the comb lattice coincides with that of the abelian sandpile, as suggested by the simulations in Figure \ref{fig:simulation_stochastic_model}. Let $\mathcal{S}_n^{p}$ denote the set of vertices that topple during the stabilization of $n\delta_o$ under the $p$-toppling dynamics, and let $\mathcal{S}_n^{\mathrm{Manna}}$ denote the corresponding set of toppled vertices for the Manna model.
\begin{question}
    Show that, for every $\varepsilon>0$, the following holds with probability $1$ as $n\to\infty$:
    $$\mathcal{B}_{(1-\varepsilon)n}\subseteq \mathcal{S}_n^p\subseteq \mathcal{B}_{(1+\varepsilon)n}\hspace{1cm}\text{and}\hspace{1cm}\mathcal{B}_{(1-\varepsilon)n}\subseteq \mathcal{S}_n^{\text{Manna}}\subseteq \mathcal{B}_{(1+\varepsilon)n}.$$
\end{question}

\textbf{Cutoff for the sandpile Markov chain on the comb.} 
In Theorem \ref{thm:main}, we study the stationary distribution of the sandpile Markov chain on finite boxes centered at the origin. A natural further question is to determine how rapidly the sandpile chain converges to this stationary distribution, that is, to obtain bounds on its mixing time. It would also be interesting to investigate whether this convergence occurs abruptly around the mixing time, in other words, whether the sandpile Markov chain exhibits cutoff on the comb lattice. The cutoff phenomenon for the sandpile Markov chain has been established on $\mathbb{Z}^2$ \cite{HJL19} and on the complete graph \cite{JLP19}, and it would be interesting to identify further classes of graphs for which cutoff occurs.

\begin{question}
    Consider the Abelian sandpile Markov chain on a box of size $(2n+1)\times(2n+1)$ in the comb lattice $\mathcal{C}_2$, and let $t_{\mathrm{mix}}^{(n)}(\varepsilon)$ denote its mixing time. Does it hold that $\lim_{n\to\infty}\frac{t_{\mathrm{mix}}^{(n)}(\varepsilon)}{t_{\mathrm{mix}}^{(n)}(1-\varepsilon)}=1$ for every $\varepsilon\in(0,1)$?
\end{question}

\textbf{Acknowledgments.}
This research was funded in part by the Austrian Science Fund (FWF) [10.55776/\allowbreak{}PAT3123425]. For open access purposes, the authors have applied a CC BY public copyright license to any author-accepted manuscript version arising from this submission.

\bibliographystyle{alpha}

\textsc{Robin Kaiser}, Departement of Mathematics, CIT, Technische Universität München, Boltzmannstr. 3, D-85748 Garching bei München, Germany. \texttt{ro.kaiser@tum.de}

\textsc{Ecaterina Sava-Huss}, Universität Innsbruck, Institut für Mathematik, Technikerstraße 13,
A-6020 Innsbruck, Austria. \texttt{Ecaterina.Sava-Huss@uibk.ac.at}

\textsc{Julia Überbacher}, Universität Innsbruck, Institut für Mathematik, Technikerstraße 13,
A-6020 Innsbruck, Austria. \texttt{Julia.Ueberbacher@uibk.ac.at}
\end{document}